\documentclass[11pt]{article}

\usepackage{amssymb,amsbsy,amsthm,amsmath,graphicx,epsfig}
\usepackage{mathabx,times}

\usepackage{sectsty}

\sectionfont{\center\normalsize}

\subsectionfont{\normalsize}

\textwidth = 125mm
\textheight = 195mm

\begin{document}


\newtheorem{thm}{Theorem}[section]
\newtheorem{prop}[thm]{Proposition}
\newtheorem{lem}[thm]{Lemma}
\newtheorem{dfn}[thm]{Definition}
\newtheorem{cor}[thm]{Corollary}
\newtheorem{ques}[thm]{Question}
\theoremstyle{remark}
\newtheorem*{rmk}{Remark}
\newtheorem{ex}[thm]{Example}


\renewcommand{\to}{\longrightarrow}
\newcommand{\actson}{\curvearrowright}

\newcommand{\Con}{\mathrm{Con}_\infty}

\renewcommand{\a}{\alpha}
\renewcommand{\b}{\beta}
\renewcommand{\d}{\mathrm{d}}
\newcommand{\eps}{\varepsilon}
\newcommand{\g}{\gamma}
\renewcommand{\k}{\kappa}
\newcommand{\s}{\sigma}
\newcommand{\G}{\Gamma}
\renewcommand{\L}{\Lambda}

\renewcommand{\phi}{\varphi}

\newcommand{\bbZ}{\mathbb{Z}}
\newcommand{\bbN}{\mathbb{N}}
\newcommand{\bbR}{\mathbb{R}}

\renewcommand{\cal}[1]{\mathcal{#1}}
\renewcommand{\rm}[1]{\mathrm{#1}}
\renewcommand{\bf}[1]{\mathbf{#1}}
\renewcommand{\t}[1]{\tilde{#1}}
\newcommand{\ol}[1]{\overline{#1}}

\renewcommand{\qed}{\hspace{\stretch{1}}$\Box$}
\newcommand{\fin}{\hspace{\stretch{1}}$\lhd$}

\title{\textbf{\Large{Integrable measure equivalence for groups of polynomial growth}}}

\author{by Tim Austin\footnote{supported by a fellowship from the Clay Mathematics Institute}\\ \\ \emph{With an Appendix by Lewis Bowen}}

\date{}

\maketitle

\begin{abstract}
Bader, Furman and Sauer have recently introduced the notion of integrable measure equivalence for finitely-generated groups.  This is the sub-equivalence relation of measure equivalence obtained by insisting that the relevant cocycles satisfy an integrability condition.  They have used it to prove new classification results for hyperbolic groups.

The present work shows that groups of polynomial growth are also quite rigid under integrable measure equivalence, in that if two such groups are equivalent then they must have bi-Lipschitz asymptotic cones.  This will follow by proving that the cocycles arising from an integrable measure equivalence converge under re-scaling, albeit in a very weak sense, to bi-Lipschitz maps of asymptotic cones.
\end{abstract}

\section{Introduction}
Measure equivalence is an equivalence relation on groups introduced by Gromov in~\cite{Gro93}.  It has since become the object of considerable study: Furman's survey~\cite{Furman11} provides a thorough overview.  However, it is essentially trivial for countably infinite amenable groups.  This is because two groups are measure equivalent whenever they have free orbit-equivalent probability-preserving ergodic actions.  Such actions exist for any infinite group, since Bernoulli shifts give examples, and Ornstein and Weiss proved in~\cite{OrnWei80} that any two such actions of any countably infinite amenable groups are orbit-equivalent, generalizing the classical theorems of Dye~\cite{Dye59,Dye63} about $\bbZ$-actions.

A measure equivalence between two groups implicity defines a pair of (equivalence classes of) cocycles over probability-preserving actions of those groups.  In~\cite{BadFurSau--IME}, Bader, Furman and Sauer have sharpened measure equivalence to a finer equivalence relation by allowing only measure equivalences for which these cocycles satisfy an integrability condition.  This sharper relation is called integrable measure equivalence, henceforth abbreviated to IME.

Their focus is on applications to rigidity of hyperbolic lattices.  The present paper considers instead finitely-generated groups of polynomial growth, and finds that these also exhibit considerable rigidity for IME, in sharp contrast to the original notion of measure equivalence.  The rigidity for these `small' groups is in terms of Gromov's notion of their asymptotic cones.

\begin{thm}\label{thm:main}
If $G$ and $H$ are f.-g. groups of polynomial growth which are IME, then there is a bi-Lipschitz bijection $\Con G\to \Con H$ between their asymptotic cones.
\end{thm}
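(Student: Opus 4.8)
The plan is to show that the measure-equivalence cocycles survive the rescaling limit that defines the asymptotic cones --- in the weak, almost-everywhere sense alluded to in the abstract --- and that the resulting limit maps are mutually inverse bi-Lipschitz bijections.

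\textbf{Reductions and the basic estimate.} By Gromov's polynomial growth theorem $G$ and $H$ are virtually nilpotent; passing to finite-index torsion-free nilpotent subgroups affects neither the asymptotic cones nor, after the usual bookkeeping, the IME class, so assume $G$ and $H$ nilpotent. By Pansu's theorem $\Con G$ and $\Con H$ are then Carnot groups --- proper, geodesic, separable metric groups with left-invariant metrics and a one-parameter family of dilations --- and (this is the point of using polynomial growth) the rescalings $(G,\tfrac1n d_G)$ converge to $\Con G$ along the full sequence $n\to\infty$. Fix an IME coupling with cocycles $\a\colon G\times X\to H$ and $\b\colon H\times Y\to G$ over p.m.p.\ actions on $X$ and $Y$; the integrability hypothesis says the length functions $x\mapsto|\a(s,x)|_H$ lie in $L^1(X)$ for $s$ in a finite generating set, and symmetrically for $\b$. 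Spelling $g\in G$ as a geodesic word, iterating the cocycle identity, and using measure-invariance and subadditivity of word length, one gets the single estimate that does all the work:
\[ \int_X |\a(g,x)|_H\,\d m \;\le\; C\,|g|_G \qquad (g\in G), \]
together with the symmetric bound $\int_Y|\b(h,y)|_G\,\d m\le C'\,|h|_H$.

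\textbf{The limit maps.} A point of $\Con G$ is a class $[g_n]$ of sequences with $|g_n|_G=O(n)$. We would like $\ol\a_x([g_n]):=[\a(g_n,x)]\in\Con H$. From the basic estimate, Markov's inequality, and Fatou's lemma (after passing to a subsequence, or working along an ultrafilter), the rescaled lengths $\tfrac1n|\a(g_n,x)|_H$ stay bounded for a.e.\ $x$, so $[\a(g_n,x)]$ is a genuine point of $\Con H$. To see it depends only on $[g_n]$ and varies continuously, write a second representative as $g_n'=w_ng_n$ and use the cocycle identity to present $\a(g_n,x)^{-1}\a(g_n',x)$ as a conjugate of $\a(w_n,\,\cdot)$; its $L^1$-norm is $\le C|w_n|_G$, and in a nilpotent group the conjugate of an element of length $o(n)$ by an element of length $O(n)$ again has length $o(n)$ --- all the relevant iterated commutators are controlled by the polynomial volume growth. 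Hence $\ol\a_x$ is well-defined and uniformly continuous on a countable dense subset of $\Con G$ for a.e.\ $x$, so extends to a map $\ol\a_x\colon\Con G\to\Con H$; and the cocycle identity gives $\ol\a_{g\cdot x}=\ol\a_x$ for every $g\in G$. Thus $x\mapsto\ol\a_x$ is a $G$-invariant measurable assignment into a Polish space of maps, hence (replacing the coupling by an ergodic component) a.e.\ equal to one canonical, base-point-free map $\ol\a\colon\Con G\to\Con H$; symmetrically $\ol\b\colon\Con H\to\Con G$.

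\textbf{Bi-Lipschitz bijectivity, and the main obstacle.} Base-point freeness is exactly what upgrades the averaged bound to a pointwise one: since $\tfrac1n d_H(\a(g_n,x),\a(g_n',x))$ is a.e.\ equal to the constant $d_{\Con H}(\ol\a[g_n],\ol\a[g_n'])$, integrating the basic estimate over $X$ yields at least a Lipschitz bound for $\ol\a$ at the identity, and likewise for $\ol\b$. Promoting this to a global Lipschitz bound amounts to showing $\ol\a$ is a group homomorphism --- equivalently, that moving the base point from $x$ to $g_n'{\cdot}x$ changes $[\a(g_n,x)]$ negligibly. \emph{This is the heart of the proof and the step I expect to be the main obstacle}: the naive cocycle estimates bound that error only by $O(n)$, not $o(n)$ (this is already visible, and genuinely fails at face value, in the way lengths of conjugates behave in, say, the discrete Heisenberg group), so one must exploit the fine structure of the cone --- presumably by inducting on the nilpotency class and treating the commutator/central directions via an ergodic theorem that tolerates a drifting base point --- to see that the limit respects the group law. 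Granting this, $\ol\a$ and $\ol\b$ are Lipschitz homomorphisms, and a Lipschitz homomorphism of Carnot groups that is Lipschitz at the identity is globally Lipschitz. Finally, unwinding the two coordinate presentations of the coupling as $H\times X$ and as $G\times Y$ produces, for a.e.\ base point, an identity $g=(\text{transfer})\cdot\b(\a(g,x),\,\cdot)\cdot(\text{transfer})^{-1}$ in which the transfer terms are values of an auxiliary cocycle that integrability and ergodicity again force to be $o(n)$; this gives $\ol\b\circ\ol\a=\mathrm{id}$ on $\Con G$, and symmetrically $\ol\a\circ\ol\b=\mathrm{id}$. Hence $\ol\a\colon\Con G\to\Con H$ is a bijection with Lipschitz inverse, i.e.\ the desired bi-Lipschitz bijection.
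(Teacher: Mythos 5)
Your proposal rests on constructing, for a.e.\ $x$, a genuine pointwise limit $\ol\alpha_x\colon\Con G\to\Con H$ by $\ol\alpha_x([g_n]):=[\alpha(g_n,x)]$ and then showing it is a Lipschitz group homomorphism. The paper explicitly demonstrates that this cannot work: the Example at the end of Section~4 constructs an integrable orbit equivalence between two $\bbZ^2$-actions for which, with probability one, there is an infinite sequence of scales $m_1<m_2<\cdots$ and, at each scale, a pair of adjacent points $u,v$ inside $[-2^{m_i+1},2^{m_i+1}]^2$ with $|\alpha_x(u)-\alpha_x(v)|\ge 2^{m_i}$. So for a.e.\ $x$ there are arbitrarily large scales at which $\alpha_x$ is maximally far from Lipschitz, and the rescaled map $\frac1n\alpha_x$ does not stabilize pointwise; Markov plus Fatou along a subsequence gives you boundedness for a single fixed sequence $[g_n]$, not the uniform Lipschitz control you need on a dense set of $[g_n]$ simultaneously, and the defects in the example show that uniform control genuinely fails for a.e.\ single $x$. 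This is a wrong route, not merely a hard step: the ``main obstacle'' you flag (showing the limit respects the group law) is downstream of a step that already fails.

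The paper's proof circumvents this by never constructing a per-point limit map at all. It proves a probabilistic (not almost-sure) growth estimate, Proposition~\ref{prop:as}, for integrable sub-cocycles over nilpotent groups --- $\mu\{|f(g,x)|\ge M|g|_G\}\to 0$ as $|g|_G\to\infty$ --- whose proof hinges on the straight-line-segment approximation (Proposition~\ref{prop:sla}) and the Subadditive Ergodic Theorem along one-parameter subgroups (Lemma~\ref{lem:cgce}). From this one gets that, for a fixed finite scaled net $E_n\subset G$, the restriction $\alpha_x|E_n$ is \emph{with high probability} (over $x$, as $n\to\infty$) Lipschitz, co-Lipschitz, and coarsely onto (Theorem~\ref{thm:main3}); crucially, different scales $n$ use different ``good'' $x$'s, which is exactly how the example's arbitrarily-large-scale defects are avoided. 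Theorem~\ref{thm:main2} then records one good finite approximation at each scale $R$, and Theorem~\ref{thm:main} is obtained by a Hausdorff-compactness diagonal argument over these finite graphs; the resulting bi-Lipschitz bijection need not be, and is not claimed to be, a group homomorphism --- that claim is essentially the open Question the paper poses at the end. The co-Lipschitz and surjectivity properties, which you propose to obtain from transfer-term estimates and $\ol\beta\circ\ol\alpha=\mathrm{id}$, are in the paper handled by the ball volume comparison of Proposition~\ref{prop:ball-compar}, which also uses Bowen's growth invariance (Lemma~\ref{lem:growth-est}); your sketch does not address the dependence of $\alpha_x(E_n)$ on $x$, which the paper flags as the real subtlety in properties (ii) and (iii).
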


Here the notation `$\Con G$' refers to the asymptotic cone of a group $G$ with a given right-invariant word metric $d_G$, as constructed in \cite[Chapter 2]{Gro93}.  By Gromov's Theorem in~\cite{Gro81} that f.-g. groups of polynomial growth are virtually nilpotent, Theorem~\ref{thm:main} is effectively a theorem about nilpotent groups.  For general groups, the construction of $\Con G$ may depend on the choice of a non-principal ultrafilter~(\cite{Tho(S)Vel00}), but for nilpotent groups, and hence groups of polynomial growth, it is known to be independent of that choice.  (We will later invoke more precise results of Pansu which imply this.)

One can see Theorem~\ref{thm:main} as a generalization to polynomial-growth groups of the result that an integrable measure equivalence between $\bbZ^d$ and $\bbZ^D$ must asymptotically define an isomorphism $\bbR^d\to \bbR^D$, and hence requires that $d = D$.  This special case follows easily by applying the Norm Ergodic Theorem to the cocycles defining the measure equivalence.

In the setting of more general groups, Lewis Bowen has shown that the growth function of a f.-g. group is an IME-invariant.  His exposition is given as a self-contained appendix to the present paper.  That result already implies that the amenable groups fall into many (indeed, uncountably many) distinct IME-classes, and that the subclass of groups of polynomial growth is IME-closed.  However, it seems that more subtle arguments are needed, for example, to distinguish the discrete Heisenberg group from $\bbZ^4$ up to IME, since both of these groups having quartic growth.  Theorem~\ref{thm:main} implies that they are not IME, because
\begin{multline*}
\Con(\hbox{discrete Heis},\hbox{word metric})\\ \cong_{\scriptsize{\hbox{bi-Lip}}} (\hbox{continuous Heis},\hbox{Carnot-Carath\'eodory metric})
\end{multline*}
and
\[\Con \bbZ^4 \cong_{\scriptsize{\hbox{bi-Lip}}} \bbR^4,\]
and these are not bi-Lipschitz (for instance, because their dimensions as topological spaces do not match).

More generally, Bowen's result implies that if $G$ is IME to $\bbZ^d$ then $G$ must be of polynomial growth, and then Theorem~\ref{thm:main} implies that $\Con G \cong_{\scriptsize{\hbox{bi-Lip}}} \bbR^d$.  It is known that $\Con G$ is always a graded connected nilpotent Lie group, and it is a Euclidean space only if $G$ was virtually Abelian~(\cite[Chapter 2]{Gro93}), so our remarks about the Heisenberg group generalize to the following.

\begin{cor}
If a f.-g. group $G$ is IME to $\bbZ^d$ for some $d$, then $G$ is virtually $\bbZ^d$. \qed
\end{cor}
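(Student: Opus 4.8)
The plan is to deduce the corollary from Theorem~\ref{thm:main} together with Bowen's appendix, Gromov's theorem, and the standard contrast between abelian and non-abelian Carnot groups; indeed almost all of it is already contained in the paragraph preceding the statement. First I would note that, since the growth function is an IME-invariant (Bowen's appendix) and $\bbZ^d$ has polynomial growth of degree $d$, the f.-g. group $G$ has polynomial growth of degree $d$, hence is virtually nilpotent by Gromov's theorem; in particular Theorem~\ref{thm:main} applies with $H=\bbZ^d$ and produces a bi-Lipschitz bijection $\Con G\to\Con\bbZ^d$. Composing with a bi-Lipschitz bijection $\Con\bbZ^d\to\bbR^d$, we obtain a bi-Lipschitz bijection between $\Con G$ and $\bbR^d$.

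Next I would argue that this forces $\Con G$ to be a Euclidean space. By Pansu's results, $\Con G$ is a connected, simply connected, stratified nilpotent Lie group carrying a Carnot--Carath\'eodory metric; writing its Lie algebra as $\bigoplus_{i\ge1}V_i$, its topological (Lebesgue covering) dimension is $\sum_i\dim V_i$ while its Hausdorff dimension equals the homogeneous dimension $Q=\sum_i i\dim V_i$. Both of these quantities are preserved by bi-Lipschitz bijections, and for $\bbR^d$ each equals $d$; hence $\sum_i\dim V_i=Q$, and since $Q-\sum_i\dim V_i=\sum_{i\ge2}(i-1)\dim V_i$, this forces $V_i=0$ for all $i\ge2$. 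Thus the Lie algebra is abelian, $\Con G$ is a Euclidean space, and (being bi-Lipschitz to $\bbR^d$) it is $\bbR^d$. Now the quoted theorem of Gromov (\cite[Ch.~2]{Gro93}) gives that $G$ is virtually Abelian, so it has a finite-index subgroup isomorphic to $\bbZ^{d'}$ for some $d'$; since the degree of polynomial growth is a commensurability invariant, $G$ has growth degree $d'$, and comparison with the first step gives $d'=d$. Therefore $G$ is virtually $\bbZ^d$. (One could equally pin down $d'=d$ from $\Con(\hbox{virtually }\bbZ^{d'})=\bbR^{d'}$ being bi-Lipschitz to $\bbR^d$.)

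The single point that is not pure bookkeeping is the assertion that a non-abelian stratified nilpotent Lie group with its Carnot metric is never bi-Lipschitz equivalent to a Euclidean space --- equivalently, that $Q>\sum_i\dim V_i$ whenever some $V_i$ with $i\ge2$ is nonzero. This is exactly the phenomenon already invoked above to separate the discrete Heisenberg group from $\bbZ^4$, and it follows from the computation of the Hausdorff dimension of a Carnot group (Mitchell) together with the bi-Lipschitz invariance of Hausdorff dimension, or alternatively from Pansu's differentiation theorem. I expect this to be the main --- indeed the only genuine --- obstacle; granting it, the corollary follows by assembling the ingredients above.
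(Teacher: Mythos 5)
Your proposal is correct and follows essentially the same route as the paper's own argument, which is contained in the paragraph immediately preceding the corollary: Bowen's growth invariance gives polynomial growth, Theorem~\ref{thm:main} gives $\Con G \cong_{\hbox{\scriptsize bi-Lip}} \bbR^d$, and then one concludes that $\Con G$ must actually be Euclidean and so $G$ is virtually Abelian by the cited result of Gromov. The one place you go beyond the paper is in spelling out, via the comparison of topological dimension $\sum_i \dim V_i$ with Hausdorff (homogeneous) dimension $Q=\sum_i i\dim V_i$, why a Carnot group that is bi-Lipschitz to $\bbR^d$ must have abelian Lie algebra; the paper treats this step implicitly by citing Gromov (and, earlier, Pansu's classification of nilpotent cones up to bi-Lipschitz equivalence by their graded Lie algebras would do the same job). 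Both routes are sound, and your dimension count is a clean self-contained way to close that gap.
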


We will also need the invariance of the growth function for an auxiliary purpose during our proofs later.

For nilpotent groups, the map
\[G \mapsto \Con G\]
seems to retain a great deal of large-scale geometric information about $G$.  The main result of Pansu's work~\cite{Pan89} is a precise characterization of those pairs of f.-g. nilpotent groups whose asymptotic cones are bi-Lipschitz: this is equivalent to isomorphism of their associated graded Lie algebras.  Moreover, for Carnot groups (that is, nilpotent groups which admit an endomorphism which enlarges all distances by a fixed factor), such as any $\bbZ^d$ or the Heisenberg group, it is known that $G$ is quasi-isometric to $\Con G$ (see example 2.C$_1$(a) in~\cite{Gro93}).  For other f.-g. nilpotent groups, the issue of just what geometric information is retained by the construction of the asymptotic cone is still not completely understood.

\subsection*{Acknowledgements}

This work emerged from an ongoing collaboration with Uri Bader, Lewis Bowen, Alex Furman, and Romain Sauer.  I am grateful to them for their enthusiasm and for numerous productive discussions.

Yves de Cornulier gave helpful feedback on an earlier version of this paper, including bringing his work~\cite{deC11} to my attention.

Part of this work was completed during a visit to the Korea Institute for Advanced Study.

\section{Background and first steps}

\subsection{Integrable measure equivalence}\label{subs:basics}

This paper will largely assume the basic facts about measure equivalence and integrable measure equivalence: we will recall only a brief statement of them here.  We essentially follow the treatment in Furman's survey~\cite{Furman11} (which is also similar to Section 1.2 and Appendix A of~\cite{BadFurSau--IME}).

Given countable discrete groups $G$ and $H$, a \textbf{measure coupling} between them is a nonzero $\s$-finite measure space $(\Omega,m)$ which admits commuting $m$-preserving actions of $G$ and $H$ which both have finite-measure fundamental domains.  We denote the actions of both $G$ and $H$ on $\Omega$ by $\cdot$.  By restricting attention to an ergodic component, one may always assume that $m$ is ergodic for the resulting $G\times H$-action on $\Omega$. The fundamental domains $Y$ and $X$ for the $G$- and $H$-actions give rise to functions $\b:H\times Y \to G$ and $\a:G\times X\to H$, defined uniquely by requiring that
\[h\cdot y \in \b(h,y)^{-1}\cdot Y \quad \hbox{and} \quad g\cdot x \in \a(g,x)^{-1}\cdot X \quad \forall x \in X,\ y \in Y\]
(the inverses are inserted so that some other calculations come out simpler later).  This also defines auxiliary finite-measure-preserving actions $S:H\actson (Y,m|_Y)$ and $T:G\actson (X,m|_X)$ by requiring that
\[h\cdot y = \b(h,y)^{-1}\cdot (S^hy) \quad \hbox{and} \quad g\cdot x = \a(g,x)^{-1}\cdot (T^gx).\]
If $m$ is ergodic for $G\times H$, then $m|_X$ is ergodic for $T$ and $m|_Y$ is ergodic for $S$.  These are both finite invariant measures, but at times it will be convenient to insist on probability measures: for those situations, we will set
\[\mu_X := m(X)^{-1}\cdot m|_X \quad \hbox{and} \quad \mu_Y := m(Y)^{-1}\cdot m|_Y.\]

Now a standard calculation shows that $\b$ and $\a$ are cocycles over $S$ and $T$ respectively: that is,
\[\a(g_1g_2,x) = \a(g_1,T^{g_2}x)\a(g_2,x) \quad \forall g_1,g_2 \in G,\ x \in X,\]
and similarly for $\b$.

In this construction, we may always replace the fundamental domain $Y$ with one of its $H$-translates, and the cocycle $\b$ will just be translated accordingly.  Since countably many translates of $Y$ cover $\Omega$, we may therefore ensure that $m(X \cap Y) > 0$.  Now a simple calculation shows that if
\[x \in X\cap Y \cap T^{g^{-1}}(X \cap Y) \quad \hbox{for some}\ g \in G,\]
then we may write
\[g^{-1}\cdot (T^gx) = \a(g^{-1},T^gx)^{-1}\cdot x = \b(\a(g^{-1},T^gx)^{-1},x)^{-1}\cdot (S^{\a(g^{-1},T^gx)^{-1}}x),\]
where the first equality holds because $T^gx \in X$, and the second because $x \in Y$.  Since we also assume that $T^gx \in Y$, and the $G$-translates of $Y$ are disjoint, this implies that
\[\b(\a(g^{-1},T^gx)^{-1},x) = g \quad \hbox{and} \quad S^{\a(g^{-1},T^gx)^{-1}}x = T^g x.\]
Finally, the cocycle equation for $\a$ gives that $\a(g^{-1},T^gx) = \a(g,x)^{-1}$, so these conclusions simplify to
\begin{eqnarray}\label{eq:inv-reln}
\b(\a(g,x),x) = g \quad \hbox{and} \quad S^{\a(g,x)}x = T^g x.
\end{eqnarray}

In particular, the orbit equivalence relations of $T$ on $X$ and $S$ on $Y$ have the same restriction to $X\cap Y$.

In the sequel, it will often be convenient to work instead with the functions $\a_x := \a(\,\cdot\,,x):G\to H$ and $\b_y := \b(\,\cdot\,,y):H\to G$.  The cocycle equation for $\a$ gives that $x\mapsto \a_x$ is a map from $X$ to
\[[G,H] := \{f:G\to H\,|\ f(e_G) = e_H\}\]
which intertwines the action $T:G\actson X$ with the action of $G$ on $[G,H]$ defined by $g:f(x) \mapsto f(xg)f(g)^{-1}$.  Similarly, $\b$ is a map from $Y$ to $[H,G]$ which intertwines $S$ with the analagous action of $H$ on $[H,G]$.  With this interpretation, the pushforward of $\mu_X$ under $x\mapsto \a_x$ is an invariant probability on $[G,H]$: such objects are discussed by Monod in~\cite{Mon06} under the term `randomorphisms', and again by Furman~\cite[Subsection 2.3]{Furman11}.  (Also, in the special case of $[\bbZ^2,\bbZ]$, they have a long history in statistical physics as models of random surfaces: see, for instance,~\cite{Sheff05} and the many references there.)

Now, for $x \in X$ and $y \in Y$, let
\[D_x := \{g \in G\,|\ T^gx \in X\cap Y\}\]
and
\[E_y := \{h \in H\,|\ S^hy \in X\cap Y\}.\]
Then $x\mapsto D_x$ is a map
\[X\to \{\hbox{subsets of}\ G\}\]
which is equivariant in the sense that
\begin{eqnarray}\label{eq:D-equivt}
D_{T^gx} = D_x\cdot g^{-1},
\end{eqnarray}
and similarly for $y\mapsto E_y$.

Also, if $m$ is ergodic for $G\times H$, then $m|_Y$ is ergodic for $S$ and $m|_X$ is ergodic for $T$.  Using this, we may extend the definitions of $D_\bullet$ and $E_\bullet$, $\a_\bullet$ and $\b_\bullet$ to almost all of $X\cup Y$.  By ergodicity, for $m$-a.e. $y \in Y$ the set $E_y$ is nonempty, so there is some $h \in H$ such that $S^hy \in X\cap Y$.  This now gives
\[y = S^{h^{-1}}S^hy = T^{\b(h^{-1},S^hy)}S^hy = T^{\b(h,y)^{-1}}S^hy,\]
using~(\ref{eq:inv-reln}) and the cocycle equation for $\b$.  Setting
\[D_y := D_{S^hy}\cdot \b(h,y),\]
this is independent from the choice of $h$ by the cocycle relations.  Similarly, for $m$-a.e. $x \in X$ there is $g \in G$ such that $T^g x \in X\cap Y$, and now we may set
\[E_x := E_{T^gx}\cdot \a(g,x).\]
For the cocycles, if $y \in Y$ and $h$ is chosen as above, we set
\[\a_y(g) := \a_{S^hy}(g\b(h,y)^{-1})\a_{S^hy}(\b(h,y)^{-1})^{-1},\]
and similarly
\[\b_x(h) := \b_{T^gx}(h\a(g,x)^{-1})\b_{T^gx}(\a(g,x)^{-1})^{-1}.\]
Once again, the consistency of these definitions follows from the cocycle relations for $\a$ and $\b$.

Having thus extended these objects, the relation~(\ref{eq:inv-reln}) now asserts that $\a_x|D_x$ is a bijection $D_x\to E_x$ for every $x \in X\cup Y$, and its inverse equals $\b_x|E_x$.

Our subsequent reasoning about measure equivalence will mostly be in terms of these equivariant maps $x\mapsto (\a_x,D_x)$ and $y\mapsto (\b_y,E_y)$.

For any f.-g. groups $G$ and $H$ and a probability-preserving action $T:G\actson (X,\mu)$, a cocycle $\a:G\times X\to H$ is \textbf{integrable} if, for any choice of finite, symmetric generating set $B_H\subseteq H$, we have
\[\||\a(g,\,\cdot\,)|_H\|_1 = \int_X|\a(g,x)|_H\,\mu(\d x) < \infty \quad \forall g\in G,\]
where $|\cdot|_H$ is the length function on $H$ associated to $B_H$.  Since the length functions arising from different choices of $B_H$ are all equivalent up to constants, this notion does not depend on the choice of $B_H$.  Moreover, the subadditivity of $|\cdot|_H$ gives
\begin{eqnarray}\label{eq:cocyc-subadd}
\||\a(g,\,\cdot\,)|_H\|_1 \leq |g|_G\cdot \max_{s \in B_G}\||\a(s,\,\cdot\,)|_H\|_1,
\end{eqnarray}
where $B_G$ is a finite, symmetric generating set for $G$, so it suffices to check integrability on this $B_G$.

A measure coupling as above is \textbf{integrable} if one can choose fundamental domains $X$ and $Y$ so that the cocycles $\a$ and $\b$ are integrable.

Finally, f.-g. groups $G$ and $H$ are \textbf{integrably measure equivalent}, or \textbf{IME}, if they admit an integrable measure coupling.  Standard arguments, given in~\cite{Furman11}, show that this defines an equivalence relation on f.-g. groups, independent of the choice of their generating sets.  It will be denoted by $\stackrel{\rm{IME}}{\sim}$.

\subsection{Initial simplification}

In our setting, standard properties of IME lead to an immediate, useful reduction of the task of proving Theorem~\ref{thm:main}.  According to Gromov's famous result from~\cite{Gro81}, any f.-g. group $G$ of polynomial growth has a f.-g. nilpotent subgroup $G_1$ of finite index.  Letting $\Omega_1:= G$ with counting measure, this defines a $(G_1,G)$-coupling
\[(g_1,g)\cdot \omega := g_1\omega g^{-1}.\]
Since $G_1$ has a finite fundamental domain in $G$, this measure coupling is trivially integrable.  The same reasoning holds for some finite-index nilpotent subgroup $H_1 \leq H$, giving an integrable $(H,H_1)$-measure equivalence.  Therefore, in the setting of Theorem~\ref{thm:main}, we obtain
\[G_1 \stackrel{\rm{IME}}{\sim} G \stackrel{\rm{IME}}{\sim} H \stackrel{\rm{IME}}{\sim} H_1,\]
and hence $G_1 \stackrel{\rm{IME}}{\sim} H_1$, by transitivity.

On the other hand, since asymptotic cones are insensitive to passage to finite-index subgroups, we have
\[\Con G = \Con G_1 \quad \hbox{and} \quad \Con H = \Con H_1.\]

It therefore suffices to prove Theorem~\ref{thm:main} for the subgroups $G_1$ and $H_1$; equivalently, in the special case with $G$ and $H$ themselves nilpotent.  This will simplify some calculations later.

\subsection{Asymptotic cones of nilpotent groups}

Henceforth $G$ and $H$ will be f.-g. nilpotent groups and $B_G$ and $B_H$ will be finite, symmetric generating sets for them.  To the generating set $B_G$ we associate the word-length function $|\cdot|_G$ and the right-invariant word metric $d_G$, and similarly for $B_H$.

It is known that all such groups $G$ with right-invariant word metrics $d_G$ have the following properties:
\begin{enumerate}
\item the asymptotic cone does not depend on the choice of ultrafilter $\omega$ up to pointed isometry, and so may be written as $\Con G$;

\item the sequence of re-scaled pointed metric spaces $(G,e_G,n^{-1}d_G)$ converges as $n\to\infty$ in the local Gromov-Hausdorff sense to the pointed metric space $(\Con G,\ol{e}_G,d_G^\infty)$ for some limit metric $d_G^\infty$ on $\Con G$ (whereas for many groups $\Con$ exists only as an ultralimit);

\item the asymptotic cone $\Con G$ is a proper metric space under $d_G^\infty$ (that is, all bounded sets are precompact).
\end{enumerate}
An element of $\Con G$ will be signified by an overline, as in `$\bar{g}$'.

Most of these properties follow from Pansu's results in~\cite{Pan83}; the last already follows from the theory in~\cite{Gro81}.  For the first, Pansu asserts only independence of the cone from $\omega$ up to a pointed bi-Lipschitz map, but this is tightened to a pointed isometry in~\cite{Bre07}.  On the other hand, in his discussion of asymptotic cones in Chapter 2 of~\cite{Gro93}, Gromov analyses more general groups for which these properties may fail, including (at least for the second property) some solvable examples.

Theorem~\ref{thm:main} will be deduced from the following.

\begin{thm}\label{thm:main2}
If $G$ and $H$ are f.-g. nilpotent groups and $G \stackrel{\rm{IME}}{\sim} H$, then there is a constant $L > 0$ for which the following holds.  For every $R > 0$, there are a finite subset $E \subseteq \Con G$ containing $\ol{e}_G$ and a map $\phi:E\to \Con H$ with the following properties:
\begin{itemize}
\item $\phi(\ol{e}_G) = \ol{e}_H$;
\item $E$ is $(1/R)$-dense in $B^\infty_G(R)$ for the metric $d_G^\infty$;
\item $\phi$ is injective, and $\phi$ and $\phi^{-1}:\phi(E)\to E$ are both $L$-Lipschitz for the limit metrics $d_G^\infty$ and $d_H^\infty$;
\item $\phi(E)$ is $(L/R)$-dense in $B^\infty_H(R/L)$ for the metric $d_H^\infty$.
\end{itemize}
\end{thm}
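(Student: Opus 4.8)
The goal is to extract, from an integrable measure coupling $(\Omega,m)$ between $G$ and $H$, the finite approximate bi-Lipschitz maps $\phi:E\to\Con H$ at every scale $R$. The strategy is to work with the equivariant maps $x\mapsto(\a_x,D_x)$ and $y\mapsto(\b_y,E_y)$, fix a "good" point $x_0\in X\cap Y$ (chosen in a positive-measure set where various almost-everywhere estimates hold uniformly), and study the map $g\mapsto \a_{x_0}(g)$ restricted to the set $D_{x_0}$ of return times. Roughly, after rescaling $G$ by $1/n$ and $H$ by $1/n$, this map should converge along the ultrafilter to a map between large balls in the asymptotic cones; the finite set $E$ is obtained by taking a $(1/R)$-net in $B^\infty_G(R)$ and pulling it back to $G$ at a suitable large scale $n$.

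\textbf{Key steps, in order.} First, I would use integrability of $\a$ together with the cocycle subadditivity bound~(\ref{eq:cocyc-subadd}) and the Lindenstrauss pointwise ergodic theorem (or the simpler maximal/Birkhoff ergodic theorem along F\o lner sets in $G$, which is amenable since it is nilpotent) to show that for $m$-a.e. $x$, the averages $\frac{1}{|F_n|}\sum_{g\in F_n}|\a(g,x)|_H$ stay bounded as $F_n$ runs over balls $B_G(n)$; this gives an upper Lipschitz-type bound $|\a_x(g)|_H\leq L|g|_G$ valid for "most" $g$ in each large ball. Symmetrically, integrability of $\b$ gives the reverse bound using that $\b_x|E_x$ inverts $\a_x|D_x$ by~(\ref{eq:inv-reln}). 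Second, I would combine this with the density facts: since $X\cap Y$ has positive measure and $D_x=\{g: T^gx\in X\cap Y\}$, another ergodic-theorem argument shows $D_x$ has positive density in $G$, hence $D_x$ (rescaled) is $(1/R)$-dense in $B_G(n)$ for $n$ large relative to $R$; the equivariance~(\ref{eq:D-equivt}) lets me move the basepoint so that $e_G\in D_{x}$, i.e. $x\in X\cap Y$. Third, having a point $x_0$ where $\a_{x_0}$ is bi-Lipschitz-on-most-of-a-large-ball and $D_{x_0}$ is dense there, I pass to the asymptotic cones: rescale, intersect $D_{x_0}$ with $B_G(Rn)$, take the image under $\a_{x_0}$, and use properness of $\Con G$ and $\Con H$ (property (3) above) plus the Gromov–Hausdorff convergence (property (2)) to pull out a finite net $E\subseteq\Con G$ and a well-defined limit map $\phi:E\to\Con H$; the Lipschitz bounds on $\phi$ and $\phi^{-1}$ and the density of $\phi(E)$ in $B^\infty_H(R/L)$ follow by taking limits of the corresponding finite-scale estimates, the last density statement again coming from positive density of $E_{x_0}\subseteq H$.

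\textbf{The main obstacle.} The delicate point is the \emph{direction and uniformity} of the ergodic-theoretic estimates. Integrability only controls $\int_X|\a(g,x)|_H\,d\mu_X$ for each fixed $g$, i.e. an $L^1$ bound; to get an almost-sure statement like "$|\a_x(g)|_H\leq L|g|_G$ for a density-one set of $g$ in each large ball", simultaneously for all large balls, one needs a genuine pointwise ergodic theorem for the amenable group $G$ applied to the function $x\mapsto\max_{s\in B_G}|\a(s,x)|_H$, and one must be careful that the F\o lner sequence of word-balls is tempered (or pass to a tempered subsequence) so that Lindenstrauss's theorem applies. Equally subtle is getting the \emph{lower} bound $|\a_x(g)|_H\geq L^{-1}|g|_G$: this is not immediate from integrability of $\a$ alone and must be routed through integrability of $\b$ and the inversion relation~(\ref{eq:inv-reln}), which only holds on $D_x\cap$(shifts), so one has to interlock the density statements for $D_x$ and $E_x$ with the Lipschitz statements. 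Managing these "on a large-density subset" qualifiers — and then checking they survive the passage to the limit so that $\phi$ is genuinely defined and bi-Lipschitz on an honest net $E$ — is where the real work lies; the subsequent extraction of $\Con G\to\Con H$ via Gromov–Hausdorff limits is comparatively formal once the finite-scale estimates are in hand.
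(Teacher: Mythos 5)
The central gap is that your proposal is missing the paper's key technical innovation, Proposition~\ref{prop:as}, and the tool you propose in its place would not suffice.

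You plan to get the Lipschitz estimate via a pointwise (Lindenstrauss-type) ergodic theorem applied to $x\mapsto\max_{s\in B_G}|\a(s,x)|_H$, hoping to conclude that for a.e.\ $x$ the averages $\frac{1}{|B_G(n)|}\sum_{g\in B_G(n)}|\a(g,x)|_H$ grow like $n$, hence most $g\in B_G(n)$ satisfy $|\a_x(g)|_H\leq L|g|_G$. But that average is dominated by $g$ near the boundary of $B_G(n)$, where $|g|_G\approx n$; it gives essentially no control on $|\a_x(g)|_H/|g|_G$ for $g$ at intermediate scales $|g|_G\ll n$. What the argument actually requires is a uniform-across-scales statement, and the paper obtains it as a \emph{probability} estimate with the opposite quantifier order: for a single $g$, $\mu\{|\a_x(g)|_H\geq M|g|_G\}\to 0$ as $|g|_G\to\infty$ (Proposition~\ref{prop:as}). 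This is then upgraded to a simultaneous statement on the net $E_n$ by a union bound, which is possible precisely because $|E_n|=|E|$ is held fixed as $n\to\infty$. That proposition is not a consequence of a general amenable-group pointwise ergodic theorem: its proof essentially uses the nilpotent geometry of $G$, via the decomposition of any $g$ into a bounded number of straight-line segments (Proposition~\ref{prop:sla}), the sub-linear growth of conjugates $|g^nhg^{-n}|_G$ (Proposition~\ref{prop:cc}), and the \emph{Subadditive} Ergodic Theorem applied along each one-parameter subgroup $g^\bbZ$ (Lemma~\ref{lem:cgce}), with the conjugate estimate needed to bootstrap $g^\bbZ$-invariance of the limit to full $G$-invariance. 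None of this appears in your plan, and I do not see how to recover a uniform Lipschitz constant $M$ (as opposed to one degrading with the length scale) without it.

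A second gap is in the co-Lipschitz and image-density steps, which you correctly flag as delicate but do not resolve. The difficulty is that the target set $\a_x(E_n)$ moves with $x$, so you cannot simply apply the Lipschitz estimate for $\b_x$ on a fixed domain; the paper's Example at the end of Section~4 shows that for a.e.\ $x$ the cocycle has isolated ``defects'' at arbitrarily large scales, so some mechanism is needed to avoid them. The paper's mechanism is Proposition~\ref{prop:ball-compar}: a volume-comparison (packing) argument using the equidistribution of the return sets $D_x$ and $E_x$ (via the Norm Ergodic Theorem and the exact growth rate from~\cite{Pan83}) together with the growth-matching from Bowen's appendix (Lemma~\ref{lem:growth-est}), which forces $d_G(g,\b_x(h))\lesssim R$ whenever $d_H(\a_x(g),h)\leq R$. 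The co-Lipschitz bound is then extracted by comparing $\a_x(E_n)$ with a \emph{fixed} auxiliary net $F_n\subset H$ and routing through $\b_x|F_n$. Your description gestures at ``interlocking the density statements for $D_x$ and $E_x$ with the Lipschitz statements'' but does not identify the packing step, which is what actually rules out the adversarial choice of bad points.

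Finally, your global scheme of fixing a single ``good'' $x_0$ and reading off the map $\a_{x_0}$ is logically coherent in outline, but with the quantifier structure above it is cleaner to do what the paper does: for each scale $R$ prove the three properties of Theorem~\ref{thm:main3} w.h.p.\ in $\mu_X$, and then choose a good $x$ \emph{per scale}, rather than seeking a single $x_0$ good at all scales simultaneously (the latter would require an a.e.\ statement that Proposition~\ref{prop:as} does not, and need not, provide).
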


\begin{proof}[Proof of Theorem~\ref{thm:main} from Theorem~\ref{thm:main2}]
For each $R \in \bbN$, let $E_R$ and $\phi_R$ be a set and map as provided by Theorem~\ref{thm:main2}, and let
\[\G_R := \{(\ol{g},\phi_R(\ol{g}))\,|\ \ol{g} \in E_R\},\]
a finite subset of $\Con G\times \Con H$ which contains the point $(\ol{e}_G,\ol{e}_H)$.

Since $\Con G$ and $\Con H$ are both proper, a diagonal argument gives a subsequence $R_1 < R_2 < \ldots$ such that the intersections $\G_{R_i}\cap (B^\infty_G(r)\times B^\infty_H(r))$ converge in the Hausdorff topology as $i\to\infty$ for every $r \in \bbN$.  This implies that there is a well-defined closed set $\G \subseteq \Con G\times \Con H$ such that
\[\G_{R_i}\cap (B^\infty_G(r)\times B^\infty_H(r)) \to \G\cap (B^\infty_G(r)\times B^\infty_H(r)) \quad \forall r > 0.\]
An easy check shows that that this $\G$ must satisfy
\[\frac{1}{L}d_G^\infty(\ol{g},\ol{g}') \leq d_H^\infty(\ol{h},\ol{h}') \leq Ld_G^\infty(\ol{g},\ol{g}') \quad \forall (\ol{g},\ol{h}),(\ol{g}',\ol{h}') \in \G,\]
so it is the graph an $L$-bi-Lipschitz function between some subsets of $\Con G$ and $\Con H$.  It also sends $\ol{e}_G$ to $\ol{e}_H$.

To finish, we must show that this function has domain the whole of $\Con G$ and image the whole of $\Con H$.  We will prove the latter fact, the former being similar.  For any $\ol{h} \in \Con H$, the fourth assumed property of the sets $\phi_{R_i}(E_{R_i})$ promises a sequence $\ol{g}_i \in E_{R_i}$ such that $\phi_{R_i}(\ol{g}_i) \to \ol{h}$ as $i \to\infty$.  Since every $\phi^{-1}_{R_i}$ is $L$-Lipschitz and maps $\ol{e}_H$ to $\ol{e}_G$, we must have $\ol{g}_i \in B^\infty_G(Ld^\infty_H(\ol{e}_H,\ol{h}))$ for all $i$.  This closed ball is compact, so after passing to a further subsequence we may assume that
\[(\ol{g}_i,\phi_{R_i}(\ol{g}_i)) \to (\ol{g},\ol{h}) \quad \hbox{as}\ i\to\infty\]
for some $\ol{g} \in \Con G$.  This now implies that $(\ol{g},\ol{h}) \in \G$, so $\G$ is the graph of a function onto the whole of $\Con H$.
\end{proof}

\subsection{Invariance of growth}

Our approach to Theorem~\ref{thm:main} will make use of the fact that the growth rate of a f.-g. group is an IME-invariant.  This follows from a more general control of growth functions under `integrable measure embeddings', proved by Lewis Bowen in his appendix to the present paper (Theorem~\ref{thm:growth2}).  The consequence that we will need is as follows.

\begin{lem}\label{lem:growth-est}
If $G$ and $H$ are f.-g. polynomial-growth groups with word metrics $d_G$ and $d_H$ and $G \stackrel{\rm{IME}}{\sim} H$, then for any $M > 0$ there is a constant $D \geq 1$ such that
\[D^{-1}|B_G(D^{-1}Mr)| \leq |B_H(r)| \leq D|B_G(DMr)| \quad \forall r > 0.\]
\end{lem}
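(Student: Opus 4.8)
The plan is to derive this comparison of ball-cardinalities from the already-cited invariance of the growth function (Theorem~\ref{thm:growth2} in Bowen's appendix). The key point is that Bowen's theorem gives a two-sided domination of growth functions up to multiplicative and linear reparametrization, and the lemma is just a cosmetic restatement of that domination with the scaling constant $M$ made explicit. First I would recall the precise form of the conclusion of Theorem~\ref{thm:growth2}: an integrable measure equivalence $G \stackrel{\rm{IME}}{\sim} H$ yields integrable measure embeddings in both directions, and each such embedding produces a constant $C \geq 1$ with $|B_H(r)| \leq C\,|B_G(Cr)|$ for all $r$ (and symmetrically $|B_G(r)| \leq C'\,|B_H(C'r)|$). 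Combining the two directions and enlarging to a single constant $C_0 := \max\{C, C'\}$ gives
\[
C_0^{-1}\,|B_G(C_0^{-1}r)| \leq |B_H(r)| \leq C_0\,|B_G(C_0 r)| \quad \forall r > 0.
\]

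Next I would absorb the parameter $M$. Given $M > 0$, I want to compare $|B_H(r)|$ with $|B_G(\,\cdot\, Mr)|$ rather than $|B_G(\,\cdot\, r)|$. If $M \geq 1$, then $|B_G(C_0 r)| \leq |B_G(C_0 M r)|$ and $|B_G(C_0^{-1} M^{-1} \cdot Mr)| = |B_G(C_0^{-1} r)|$ — wait, more carefully: monotonicity of $r \mapsto |B_G(r)|$ lets me replace $C_0 r$ by $C_0 M r$ in the upper bound whenever $M \geq 1$, and replace $C_0^{-1} r$ by $C_0^{-1} M^{-1} r = C_0^{-1}M^{-1}\cdot \tfrac{r}{1}$, comparing to $M r$ via $C_0^{-1}M^{-1}\cdot Mr$ — the cleanest route is just to set $D := C_0 \max\{M, M^{-1}\}$ and check directly, using monotonicity, that $D^{-1}\,|B_G(D^{-1}Mr)| \leq |B_H(r)| \leq D\,|B_G(DMr)|$: indeed $DMr \geq C_0 r$ and $D^{-1}Mr \leq C_0^{-1} r$ by the choice of $D$, and $D \geq C_0$, so both inequalities follow from the previous display by monotonicity of the growth function. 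This produces the desired $D \geq 1$.

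I do not expect any serious obstacle here; the content is entirely in Bowen's appendix, and this lemma is a packaging step. The only points requiring a little care are: (i) making sure the two one-sided embeddings promised by IME really do both hold (this is the standard symmetry of measure equivalence, so it applies), and (ii) bookkeeping with the constant when the linear reparametrization inside the growth function interacts with the extra factor $M$ — but since $r \mapsto |B_G(r)|$ is non-decreasing, this is purely a matter of choosing $D$ large enough, as above. If one prefers to avoid invoking the full strength of the appendix's ``integrable measure embedding'' formulation, one can instead note that an integrable measure coupling directly yields, via the integrability of the cocycles $\a$ and $\b$ together with a transference/averaging argument over the fundamental domains, exactly such a two-sided growth comparison; but citing Theorem~\ref{thm:growth2} is the shortest path.
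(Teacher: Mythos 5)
Correct, and at the step passing to general $M$ your route is actually more elementary than the paper's. Both proofs reduce to Bowen's Theorem~\ref{thm:growth2} for the case $M=1$; the paper then appeals to the doubling property of polynomial-growth metrics, while you use only monotonicity of $r\mapsto|B_G(r)|$ together with the choice $D := C_0\max\{M,M^{-1}\}$, a manipulation that works for arbitrary f.-g.\ groups. The trade-off is that your argument proves exactly the lemma as printed, with the scaling constant $D$ perturbing the ball radius in opposite directions on the two sides, and no more. The paper in fact invokes the lemma later (in the proof of Proposition~\ref{prop:ball-compar}) in the form $|B_G(R/2M)| \geq D|B_H(2R)|$ for some fixed $D>0$, which is really the cleaner two-sided comparison $D^{-1}|B_G(Mr)| \leq |B_H(r)| \leq D|B_G(Mr)|$ with $D$ on the outside only; monotonicity alone cannot pull a multiplicative factor out of the ball radius, but doubling can, so the paper's reference to doubling is doing real work. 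Your proof is fine for the statement as written, but to use the lemma the way the paper does, a doubling step would still be needed somewhere.
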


\begin{proof}
In case $M = 1$, Bowen's result gives this for arbitrary f.-g. groups.  For nilpotent groups, the case of general $M$ follows because the polynomial growth of those groups implies that the metrics $d_G$ and $d_H$ are doubling.
\end{proof}

\section{A refined growth estimate for cocycles}

If $G$ and $H$ are f.-g. groups with word metrics $d_G$ and $d_H$, $(X,\mu,T)$ is a probability $G$-space and $\s:G\times X\to H$ is an integrable cocycle, then the cocycle identity and an induction on word-length imply that
\[\||\s(g,\cdot)|_H\|_1 \leq C|g|_G\]
for some fixed constant $C$, which may be taken to be $\max_{s \in B_G}\||\s(s,\cdot)|_H\|_1$.  Using Markov's Inequality, this implies that
\[\mu\{|\s(g,x)|_H \geq MC|g|_G\} \leq 1/M \quad \forall M > 0.\]

A key tool in proving Theorem~\ref{thm:main} will be a small but crucial improvement on this estimate in the setting of nilpotent groups.  This is most cleanly formulated in terms of the following abstract notion.

\begin{dfn}
Given any l.c.s.c. group $G$ and probability $G$-space $(X,\mu,T)$, a \textbf{sub-cocycle} over this $G$-space is a measurable function $f:G\times X\to [0,\infty)$ such that
\[f(gh,x) \leq f(g,T^hx) + f(h,x) \quad \hbox{for a.e.}\ x,\ \forall g,h \in G.\]
It is \textbf{integrable} if $f(g,\cdot)$ is integrable for every $g$.
\end{dfn}

This nomeclature is not completely standard.  Setting
\[\rho_x(g,h) := f(gh^{-1},T^hx),\]
one can check that $x \mapsto \rho_x$ is an equivariant map from $(X,T)$ to the space of pseudometrics on $G$ with the action of $G$ given by translation on the right (in particular, the sub-cocycle inequality becomes the triangle inequality).  As with `randomorphisms', important examples of such stationary random pseudometrics for $G = \bbZ^d$ are classical objects in probability: in the study of first-passage percolation models, the first passage times between pairs of points define such a pseudometric.  Classic references for the asymptotic behaviour of this pseudometric include~\cite{HamWel65,CoxDur81,Boi90}, and a recent survey of this area can be found in~\cite{GriKes12}.

In a sense, the next proposition can be seen as very weak nilpotent-groups extension of the convergence of the reachable sets to the limit shape (that is, of these random pseudometrics to a deterministic limiting norm) in first-passage percolation.

\begin{prop}\label{prop:as}
If $G$ is a f.-g. nilpotent group, $(X,\mu,T)$ is a probability $G$-space and $f:G\times X\to [0,\infty)$ is an integrable sub-cocycle, then there is some $M \geq 1$ (depending on $G$, $B_G$ and $f$) such that
\[\mu\{|f(g,x)| \geq M |g|_G\} \to 0 \quad \hbox{as} \quad |g|_G \to \infty.\]
\end{prop}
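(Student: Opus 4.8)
The plan is to exploit the extra structure available in a nilpotent group beyond mere subadditivity, namely the fact that elements of word-length roughly $n$ can be written as products of roughly $n/k$ pieces each of word-length roughly $k$, but arranged so that the pieces lie in a bounded region rather than marching off to infinity. Concretely, if $G$ has nilpotency class $c$, then a standard fact (a form of Gromov's or Pansu's estimates on word metrics, or just the polynomial growth bound) says that any $g$ with $|g|_G \leq n$ can be written as a product $g = s_1 s_2 \cdots s_N$ where $N = O(n)$, each $s_i$ lies in a fixed finite generating set $B_G$, \emph{and} every partial product $s_1 \cdots s_j$ has word-length $O(n)$. Applying the sub-cocycle inequality along such a factorization gives
\[
f(g,x) \leq \sum_{j=1}^{N} f(s_j, T^{s_{j+1}\cdots s_N} x),
\]
which expresses $f(g,\cdot)$ as a sum of $O(n)$ translates of the functions $f(s,\cdot)$, $s \in B_G$. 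First I would make this factorization statement precise and cite the appropriate reference.

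The key point is then a combinatorial/counting argument to control the measure of the bad set. Let $\Phi(x) := \max_{s \in B_G} f(s,x)$, an integrable function with $\|\Phi\|_1 =: C < \infty$. The displayed bound gives $f(g,x) \leq \sum_{j} \Phi(T^{z_j} x)$ where all the group elements $z_j$ (the tails of the factorization) lie in $B_G(Kn)$ for a constant $K$ depending only on $G$ and $B_G$. Now the crucial observation is that we want to bound $\mu\{ x : f(g,x) \geq Mn \}$. If $f(g,x)$ is large, then $\sum_j \Phi(T^{z_j}x)$ is large, so $\Phi(T^z x)$ must be large for a substantial number of $z \in B_G(Kn)$, or at least large on average over such $z$; averaging $\sum_{z \in B_G(Kn)} \Phi(T^z x)$ over $x$ gives $|B_G(Kn)| \cdot C$ by invariance of $\mu$. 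By Markov, $\mu\{ x : \sum_{z \in B_G(Kn)} \Phi(T^z x) \geq M' |B_G(Kn)| \} \leq C/M'$. The trouble is that $N = O(n)$ while $|B_G(Kn)|$ grows like $n^{\mathrm{hom.dim}}$, which is much larger for non-abelian $G$, so this crude bound does not obviously close — and this is exactly why the result is only claimed as a limit as $|g|_G \to \infty$ rather than a uniform inequality.

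The way out, and the step I expect to be the main obstacle, is to use the invariance of growth (Lemma~\ref{lem:growth-est}), or rather a uniform-integrability/ergodic-averaging refinement. The honest approach is: fix $\eps > 0$; since $\Phi \in L^1$, there is $A$ with $\int_{\{\Phi > A\}} \Phi \, \d\mu < \eps$. Split $\Phi = \Phi \wedge A + (\Phi - A)_+$. The bounded part contributes at most $A \cdot N = O(An)$ to $f(g,x)$ deterministically, which is $\leq (M/2)|g|_G$ once $M \geq 2AK'$ for the relevant constant — wait, one must be careful that $N \leq K'|g|_G$, which the factorization provides. For the tail part, $\sum_j (\Phi - A)_+(T^{z_j}x)$ has $L^1$-norm at most $N \cdot \eps = O(\eps n)$, so by Markov its measure of exceeding $(M/2)n$ is $O(\eps/M)$, uniformly in $g$. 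Choosing $M$ large and then letting $\eps \to 0$ after, one gets $\mu\{ f(g,x) \geq Mn \} \to 0$ as $n \to \infty$ — actually one gets it bounded by $O(\eps/M)$ for all large $n$, and since $\eps$ is arbitrary the limit is $0$. The delicate bookkeeping is making sure the factorization gives both $N \leq K'|g|_G$ and the tails in a bounded ball simultaneously, and that the constant $M$ can be chosen \emph{before} sending $|g|_G \to \infty$; I would isolate the factorization lemma as a separate statement and then the rest is the two-term split plus Markov as above.
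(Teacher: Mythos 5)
There is a genuine gap, and it is the central issue. Your factorization writes $g$ as a product of $N = O(|g|_G)$ \emph{individual generators}; this is just the definition of word length and uses no nilpotency at all. Consequently the two-term split $\Phi = \Phi \wedge A + (\Phi-A)_+$ cannot close: the deterministic bound on the bounded part forces $M \geq 2AK'$, but $A = A(\eps)$ must grow as $\eps \to 0$ (unless $\Phi$ happens to be essentially bounded), so the constant $M$ you end up choosing depends on $\eps$. Unwinding your estimate, what you actually prove is that for every $\delta > 0$ there is an $M_\delta$ with $\mu\{f(g,x) \geq M_\delta |g|_G\} \leq \delta$ for all $g$ --- but that already follows from Markov's inequality applied to $\|f(g,\cdot)\|_1 \leq C|g|_G$ (take $M_\delta = C/\delta$), holds for arbitrary f.-g.\ groups, and is strictly weaker than the proposition, which demands a \emph{single} $M$ for which the measure tends to $0$. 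The fact that your argument never invokes nilpotency, while the claim is known to be false in that generality, is itself a warning sign.

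The two ingredients you are missing are precisely where nilpotency and ergodic theory enter. First, one needs Proposition~\ref{prop:sla}: any $g$ of word length $n$ can be written as $s_1^{a_1}\cdots s_k^{a_k}$ with $k \leq K$ \emph{bounded} (independent of $n$) and $\sum a_j \leq Kn$. The bounded number of ``straight-line segments'' is the crucial structural feature of nilpotent groups here; with $O(n)$ atomic factors the union/Markov bookkeeping degrades back to the trivial Markov bound, exactly as you observed. Second, for each long segment $s_j^{a_j}$ one needs a \emph{concentration} statement, not an $L^1$ bound: by the Subadditive Ergodic Theorem (together with the commutator estimate of Proposition~\ref{prop:cc}, which is used to show the subadditive limit is invariant under all of $G$ and hence a.s.\ constant), $\frac{1}{a}f(s^a,\cdot)$ converges a.e.\ to a constant $\leq \|f(s,\cdot)\|_1$, so $\mu\{f(s^a,\cdot) \geq 2a\|f(s,\cdot)\|_1\} \to 0$ as $a \to \infty$. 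Applying this to the $\leq K$ segments with $a_j$ large, and Markov to the $\leq K$ segments with $a_j$ bounded, gives a fixed $M = 4K^2\max_s\|f(s,\cdot)\|_1$ for which the measure tends to $0$. Your uniform-integrability split is a reasonable instinct but cannot substitute for the pointwise subadditive convergence.
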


That is, as one considers increasingly large distances in $G$, the function $f$ is vanishingly unlikely to blow up those distances by any factor greater than $M$.  Note the convention that we always choose $M \geq 1$, even if one could actually use a smaller $M$ for some $f$.

The proof of Proposition~\ref{prop:as} rests on two basic geometric facts about nilpotent groups.

\begin{prop}[Approximation by straight-line segments]\label{prop:sla}
Suppose $G$ is a f.-g. nilpotent group with a finite symmetric generating set $B_G$.  Then there is some $K > 0$, depending on $G$ and $B_G$, with the following property. Whenever $g \in G$ with $|g|_G = n$, there is a $B_G$-word of the form
\[s_1^{a_1}\ldots s_k^{a_k},\quad a_1,a_2,\ldots,a_k \geq 0,\]
which evaluates to $g$ (where $s_1$, $s_2$, \ldots, $s_k$ are members of $B_G$ but may not be distinct) and such that
\[ k\leq K\quad \hbox{and}\quad a_1 + a_2 + \ldots + a_k \leq Kn.\]
\end{prop}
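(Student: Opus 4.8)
The plan is to prove the proposition by induction on the nilpotency class $c$ of $G$, at each step peeling off the top term $\g_c(G)$ of the lower central series. If $c=1$, so $G$ is abelian, the claim is elementary: take an optimal $B_G$-word for $g$, collect equal generators and cancel each occurrence of an $s$ against an occurrence of $s^{-1}$ (both of which lie in $B_G$); this expresses $g$ as a product of at most $|B_G|$ powers $s^{a}$ with $s\in B_G$, $a\ge 0$, and total exponent at most $|g|_G=n$, so $K=\max(|B_G|,1)$ works. For $c\ge 2$, let $\bar G:=G/\g_c(G)$, a group of class at most $c-1$ carrying the induced symmetric generating set $\bar B_G$; given $g$ with $|g|_G=n$, its image has $|\bar g|_{\bar G}\le n$, so by the inductive hypothesis $\bar g=\bar s_1^{a_1}\cdots\bar s_{k'}^{a_{k'}}$ with $s_i\in B_G$, $k'\le K_{c-1}$ and $\sum_i a_i\le K_{c-1}n$. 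Lifting this relation to $G$ gives $g=s_1^{a_1}\cdots s_{k'}^{a_{k'}}\cdot z$ with $z\in\g_c(G)$ and $|z|_G\le (1+K_{c-1})n$. It therefore suffices to show that every $z\in\g_c(G)$ with $|z|_G=m$ can be written as a product of boundedly many straight-line segments of total length $O(m)$, the number and the implied constant depending only on $G$ and $B_G$; these segments are then appended to the prefix above and everything is absorbed into a single constant $K=K_c$.

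Fix $z\in\g_c(G)$ with $|z|_G=m$. Since $\g_c(G)$ is the last nontrivial term of the lower central series it is central, hence a finitely generated abelian group, and it is generated by a \emph{fixed finite list} $w_1,\dots,w_p$ of $c$-fold iterated commutators $w_j=[t_{j,1},\dots,t_{j,c}]$ with all $t_{j,i}\in B_G$. Thus $z=w_1^{e_1}\cdots w_p^{e_p}$ for suitable integers $e_j$, and the one genuinely quantitative input I will need is that this can be arranged with $\max_j|e_j|\le Cm^c$: this is the standard fact that the restriction of the word metric to $\g_c(G)$ behaves like the $c$-th root of the coordinates (it follows from Hall's commutator-collection process applied to an optimal length-$m$ word representing $z$, or from the homogeneous coordinates of \cite{Pan83}; cf.\ Bass--Guivarc'h). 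Any torsion in $\g_c(G)$ only forces the corresponding exponents to be bounded, which is harmless.

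Next, in a group of class exactly $c$ the identity
\[ w_j^{\,b_1 b_2\cdots b_c}=[\,t_{j,1}^{\,b_1},\,t_{j,2}^{\,b_2},\dots,t_{j,c}^{\,b_c}\,] \]
holds for all positive integers $b_1,\dots,b_c$: it holds modulo $\g_{c+1}(G)$ by the standard commutator identities (multilinearity of the iterated commutator up to higher-order terms), and $\g_{c+1}(G)=\{e\}$. Expanding the right-hand side using the definition of an iterated commutator produces a word that is a product of at most $N_c$ segments of the form $s^{\pm b_i}$ with $s\in B_G$ (using that $B_G$ is symmetric), where $N_c$ depends only on $c$ (one checks $N_c=3\cdot 2^{c-1}-2$ suffices), of total length at most $N_c\,c\,\max_i b_i$. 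To realise an arbitrary exponent I use the elementary observation that every non-negative integer $E$ is a sum of at most $c$ terms, each a product of $c$ positive integers all $\le\lceil E^{1/c}\rceil$: writing $M=\lceil E^{1/c}\rceil$, the term $\lfloor E/M^{c-1}\rfloor\cdot M\cdots M$ accounts for all but a residue $<M^{c-1}$, and one recurses on the residue (this is essentially the base-$M$ expansion of $E$). Applying this with $E=|e_j|$, so $M=O(m)$ since $|e_j|=O(m^c)$, and inserting the displayed identity term by term, $w_j^{|e_j|}$ — and, by inverting all segments, $w_j^{-|e_j|}$ — becomes a product of at most $c\,N_c$ straight-line $B_G$-segments of total length $O(m)$. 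Concatenating over $j=1,\dots,p$ expresses $z=w_1^{e_1}\cdots w_p^{e_p}$ as a product of at most $p\,c\,N_c$ such segments of total length $O(m)$, which is what the reduction in the first paragraph required.

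The crux, and the only place where the geometry of nilpotent groups really enters, is the sharp exponent in the second paragraph. Without the bound $\max_j|e_j|=O(|z|_G^{\,c})$, the balanced factorization of $|e_j|$ would still work, but it would produce segments of length $\asymp|e_j|^{1/c}$, which is $O(m)$ precisely when $|e_j|=O(m^c)$; any weaker bound would make the segments too long for the estimate $\sum_i a_i\le Kn$. Everything else is the lower-central-series induction together with two elementary lemmas: collecting generators in the abelian case, and the balanced factorization of an integer. It is worth emphasising that the straight-line segments must be powers of elements of $B_G$ itself, which is exactly why the commutator generators $w_j$ are taken to be commutators of elements of $B_G$, and why the induction is organised through the quotients $G/\g_c(G)$ rather than through a single Mal'cev basis, whose generators need not lie in $B_G$.
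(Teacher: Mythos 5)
Your argument is correct and follows the same overall skeleton as the paper's proof: induction on the nilpotency class, reduction modulo the last nontrivial term $\g_c(G)$ of the lower central series, the subgroup-distortion estimate ($|z|_{B'} \lesssim |z|_G^{\,c}$ on $\g_c(G)$, which the paper cites to Osin), and the observation that a $c$-fold iterated commutator of large powers of generators is itself a high power of the corresponding commutator of the generators. There is, however, one genuinely different move, and it is an improvement. Where the paper invokes the Hilbert--Waring theorem to write each large exponent as a bounded sum of perfect $c$-th powers $n^c$, and then uses the special case $[s_1^n,\dots,s_c^n]=[s_1,\dots,s_c]^{n^c}$ (its Lemma~\ref{lem:comms-manip}), you instead use the slightly more general multilinear identity $[t_1^{b_1},\dots,t_c^{b_c}]=[t_1,\dots,t_c]^{b_1\cdots b_c}$ modulo $\g_{c+1}$ together with the elementary ``balanced factorization'' lemma --- essentially the base-$M$ expansion of $E$ with $M=\lceil E^{1/c}\rceil$, giving $E$ as a sum of at most $c$ products of $c$ factors each $\le M$. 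This completely removes the dependence on Hilbert--Waring, which is a deep theorem of analytic number theory, and replaces it with an observation any reader can verify in a line; the cost is only that one needs the commutator-power identity with distinct exponents $b_1,\dots,b_c$, but the paper's own inductive proof of Lemma~\ref{lem:comms-manip} generalizes verbatim to that case (each time $s_1$ is moved past $g^{B}$ one collects $B$ central copies of $[s_1,g]$, and doing this $b_1$ times yields $[s_1,g]^{b_1 B}$), so no new difficulty arises. The one place you are a little terse is precisely this generalized commutator identity: you assert it ``by multilinearity up to higher-order terms'' without proof. That is standard and true, but in a fully written version you would want to either cite it or carry out the same collection argument the paper uses; as it stands it is an assertion, not a gap.
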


Intuitively, this asserts that `any point in $(G,d_G)$ may be reached by a sequence of at most $K$ straight-line segments of length not much greater than the distance to that point'. I have not been able to find a reference for Proposition~\ref{prop:sla}, but it is a fairly routine exercise in nilpotency, so its proof is deferred to Appendix~\ref{app:nilpcalc}.

\begin{rmk}
Conversely, any group $G$ having this property for some generating set $B_G$ must have polynomial growth with exponent at most $K$, and hence be virtually nilpotent, by Gromov's Theorem.  This follows by counting how many possible products there are of the form $s_1^{a_1}\ldots s_k^{a_k}$.  \fin
\end{rmk}

The second estimate we will need is the following.

\begin{prop}[Commutators grow sub-linearly]\label{prop:cc}
Let $G$ and $|\cdot|_G$ be as before.  Then for any $g,h \in G$ one has
\[|g^nhg^{-n}|_G = \rm{o}(n)\quad\hbox{as}\ n\to\infty\]
(although, of course, not uniformly in the choice of $g$ and $h$). \qed
\end{prop}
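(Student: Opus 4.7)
The plan is to reduce the claim to bounding the commutator $[g^n,h] := g^n h g^{-n} h^{-1}$, since
\[
g^n h g^{-n} \;=\; [g^n,h]\cdot h,
\]
so $|g^n h g^{-n}|_G \le |[g^n,h]|_G + |h|_G$, and it therefore suffices to show $|[g^n,h]|_G = \rm{o}(n)$.

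To do this I would work in the lower central series $G = G_1 \supseteq G_2 \supseteq \ldots \supseteq G_{c+1} = \{e\}$, where $c$ is the nilpotency class of $G$. Define the iterated commutators $c_1 := [g,h]$ and $c_{k+1} := [g, c_k]$, which lie in $G_{k+1}$ and vanish once $k \ge c$. The basic identity
\[
[g^{n+1},h] \;=\; g[g^n,h]g^{-1}\cdot [g,h]
\]
together with iterated commutator collection (the Hall--Petresco formula) gives, modulo the trivial group $G_{c+1}$, a presentation of $[g^n,h]$ as an ordered product of terms of the form $\xi^{p(n)}$, where each $\xi$ is a fixed iterated commutator in $g$ and $h$ lying in some $G_{k+1}$, and each $p$ is a polynomial in $n$ of degree at most $k$.

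To turn this algebraic description into a metric bound I would use the classical distortion estimate that for any $z\in G_{k+1}$ one has $|z^N|_G = O(N^{1/(k+1)})$; this can be proved directly from the straight-line-segment approximation of Proposition~\ref{prop:sla}, or quoted from Pansu. Applied termwise, together with the bound $p(n) = O(n^k)$ on the exponents, this yields
\[
|\xi^{p(n)}|_G \;=\; O\bigl(n^{k/(k+1)}\bigr) \;=\; \rm{o}(n)
\]
for each of the finitely many factors, and summing gives $|[g^n,h]|_G = O\bigl(n^{(c-1)/c}\bigr) = \rm{o}(n)$, as required.

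The main obstacle is clean bookkeeping in the commutator collection: re-ordering factors past one another produces higher-weight correction terms, and one has to verify that the correction in each graded piece $G_{k+1}/G_{k+2}$ remains polynomial of degree $\le k$ in $n$, rather than leaking polynomial growth of degree $c-1$ into every piece. If one wishes to avoid Hall--Petresco altogether, an alternative is to induct on the nilpotency class, quotienting by $Z := G_c$: the inductive hypothesis takes care of $[g^n,h]$ modulo $Z$, and the residual central element is of the form $z_0^{p(n)}$ with $\deg p \le c-1$, which is controlled by the same $|z^N|_G = O(N^{1/c})$ estimate.
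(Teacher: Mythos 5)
Your approach takes a genuinely different route from the paper, which does not actually prove this proposition: it simply cites de Cornulier's Proposition 3.1(iii$'$) and Corollary A.2, observing that the constant sequence $(h)$ lies in his class $\mathrm{Sublin}(G)$. Your argument is a self-contained commutator-collection proof, and it would in fact yield the sharper polynomial bound $|g^nhg^{-n}|_G = O(n^{(c-1)/c})$, where $c$ is the nilpotency class, rather than merely $o(n)$.

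Two cautions on the ingredients. The distortion estimate $|z^N|_G = O(N^{1/(k+1)})$ for $z\in G^{k+1}$ is a true and standard fact, but note that the paper's Lemma~\ref{lem:subgp-dist} gives the \emph{opposite} inequality (an upper bound on $|h|_{G^m}$ in terms of $|h|_G^m$, hence a lower bound on $|h|_G$), so the estimate you need is not a direct consequence of Proposition~\ref{prop:sla} — which itself \emph{uses} Lemma~\ref{lem:subgp-dist}. It is proved in the same spirit, by combining Lemma~\ref{lem:comms-manip} with the Hilbert--Waring theorem and an induction up the lower central series from the bottom term, or it can be quoted from Pittet or Osin rather than Pansu. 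Second, you are right to flag the Hall--Petresco bookkeeping as the crux. The key fact is precisely that the exponent appearing in the graded piece $G^{k+1}/G^{k+2}$ has degree at most $k$ in $n$, not $k+1$ (heuristically because a $(k+1)$-fold commutator in which one slot is occupied by $h$ leaves only $k$ slots for $g$); if that degree were allowed to be $k+1$, the corresponding factor would contribute $O(n)$ and the argument would fail. Your proposed ``alternative'' induction on nilpotency class, quotienting by $Z=G^c$, does not sidestep this issue — the assertion that the residual central element is of the form $z_0^{p(n)}$ with $\deg p\le c-1$ is exactly the same degree bound in disguise — so that step must still be carried out, for instance via the collection process or via the Mal'cev/Lie correspondence. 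Modulo that bookkeeping, the strategy is sound and gives a pleasant effective version of the fact the paper imports from de Cornulier.
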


Proposition~\ref{prop:cc} is a special instance of de Cornulier's Proposition 3.1, part (iii$'$), and Corollary A.2 in~\cite{deC11}.  This is because, in his notation, the constant sequence $(h)$ is an element of $\rm{Sublin}(G)$ (whose definition can be found in that paper).  (Note that his Corollary A.2 seems to be mis-labelled as `A.7' in some versions.)  

Assuming the above group-theoretic facts, the next step towards Proposition~\ref{prop:as} is the following consequence of the Ergodic Theorem:

\begin{lem}\label{lem:cgce}
If $T:G\actson (X,\mu)$ is ergodic then for any $g \in G$ the functions
\[\frac{1}{n}f(g^n,\,\cdot\,)\]
converge $\mu$-a.e. as $n\to\infty$ to a function which is $\mu$-a.s. constant with value at most $\|f(g,\,\cdot\,)\|_1$.
\end{lem}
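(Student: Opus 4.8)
The plan is to recognize $\{f(g^n,\cdot)\}_{n\geq 1}$ as a subadditive cocycle along the orbit of the single transformation $T^g$, and then apply Kingman's subadditive ergodic theorem. Precisely, fix $g \in G$ and write $U := T^g$, so that $U:X\to X$ is a $\mu$-preserving transformation. Set $f_n(x) := f(g^n,x)$. The sub-cocycle inequality with the pair $(g^{m}, g^{n})$ and the point $x$ gives
\[
f_{m+n}(x) = f(g^{m}g^{n},x) \leq f(g^{m}, T^{g^{n}}x) + f(g^{n},x) = f_m(U^n x) + f_n(x)
\]
for a.e.\ $x$. Thus $(f_n)$ is a nonnegative subadditive sequence over $(X,\mu,U)$ in exactly the sense required by Kingman's theorem, and each $f_n$ is integrable because $f$ is an integrable sub-cocycle (indeed $\|f_n\|_1 \leq n\|f_1\|_1$ by iterating subadditivity and using $U$-invariance of $\mu$).

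First I would invoke Kingman's subadditive ergodic theorem: it yields a $U$-invariant integrable function $\bar f$ with $\frac1n f_n \to \bar f$ $\mu$-a.e.\ and in $L^1$, and moreover $\int \bar f\,\d\mu = \inf_n \frac1n\int f_n\,\d\mu = \lim_n \frac1n\int f_n\,\d\mu$. Next I would pass from $U$-invariance of $\bar f$ to $T$-invariance: the limit function $\bar f$ only a priori knows about the $\bbZ$-action generated by $g$, but the cleanest route to the claimed constancy is simply that $\frac1n f_n$ converges a.e.\ along the full sequence and the limit is $U = T^g$-invariant, hence constant $\mu$-a.e.\ on each ergodic component of $U$; but actually the hypothesis is that $T$ (the whole $G$-action) is ergodic, which need not make $T^g$ ergodic. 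So instead I would argue constancy directly from the subadditive structure together with $G$-ergodicity: note that for any $h \in G$, the two-sided estimate
\[
f_n(T^h x) \leq f(hg^n h^{-1},x) + f(h,x) + f(h^{-1},T^h x)
\]
(obtained by writing $g^n = h^{-1}(hg^nh^{-1})h$ and applying subadditivity twice) controls $f_n\circ T^h$ by $f_n$ plus an error of the form $f(h g^n h^{-1},x)$, and by Proposition~\ref{prop:cc} this error has word-length $\mathrm{o}(n)$, hence — since $C|hg^nh^{-1}|_G$ bounds its $L^1$ norm and the same Markov-type control applies — it is $\mathrm{o}(n)$ in probability; dividing by $n$ shows $\bar f\circ T^h = \bar f$ a.e.\ for every $h$, so by ergodicity of $T$ the function $\bar f$ is $\mu$-a.s.\ a constant $c$.

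Finally, the bound $c \leq \|f(g,\cdot)\|_1$ is immediate from Kingman: $c = \int\bar f\,\d\mu = \inf_n \frac1n\|f_n\|_1 \leq \frac11\|f_1\|_1 = \|f(g,\cdot)\|_1$. The main obstacle I anticipate is precisely the constancy step: one must be a little careful because $T^g$ alone may fail to be ergodic even when the $G$-action is, so one cannot just quote ``invariant functions of an ergodic system are constant.'' The commutator estimate of Proposition~\ref{prop:cc} is exactly what repairs this — it shows that replacing $x$ by $T^h x$ perturbs $f_n$ only by a sub-linear amount, so the limit $\bar f$ is genuinely invariant under the whole group, not merely under $\langle g\rangle$. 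Everything else is routine bookkeeping with the sub-cocycle inequality and the standard statement of Kingman's theorem.
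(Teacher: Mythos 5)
Your overall strategy — Kingman's theorem to get a.e.\ convergence to a $T^g$-invariant limit, then promote to $G$-invariance via the sub-linear commutator growth of Proposition~\ref{prop:cc}, then conclude constancy from ergodicity of $T$ — is exactly the paper's, and your identification of the potential pitfall (that $T^g$ alone may not be ergodic) is also exactly the point the paper is careful about. The bound $c \leq \|f(g,\cdot)\|_1$ via Kingman's infimum formula is fine.

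However, the invariance step as you wrote it does not work, because you have used the wrong conjugate. You decompose $g^n = h^{-1}(hg^nh^{-1})h$ and claim the error term $f(hg^nh^{-1},x)$ is controllable because ``by Proposition~\ref{prop:cc} this error has word-length $\mathrm{o}(n)$.'' But Proposition~\ref{prop:cc} says $|g^nhg^{-n}|_G = \mathrm{o}(n)$ — conjugation of a \emph{fixed} $h$ by a \emph{growing} power $g^n$ — whereas your term is $hg^nh^{-1} = (hgh^{-1})^n$, a growing power conjugated by a fixed element; its word-length is comparable to $|g^n|_G$, hence of order $n$, not $\mathrm{o}(n)$. So your ``error'' is actually a main term (for the conjugate element $hgh^{-1}$), and your displayed inequality does not in fact contain $f_n(x)$ on the right-hand side at all, contrary to your prose description ``controls $f_n\circ T^h$ by $f_n$ plus an error.'' The correct decomposition is $g^n = (g^nhg^{-n})\cdot g^n\cdot h^{-1}$, which yields
\[
f(g^n,T^hx) \leq f\bigl(g^nhg^{-n},T^{g^n}x\bigr) + f(g^n,x) + f(h^{-1},T^hx);
\]
here the first term has $L^1$-norm $\mathrm{O}(|g^nhg^{-n}|_G) = \mathrm{o}(n)$ by Proposition~\ref{prop:cc}, the last is $\mathrm{O}(1)$, and the middle is the genuine $f_n(x)$. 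Dividing by $n$, passing to a subsequence along which the first term $\to 0$ a.e., and then reversing the roles of $x$ and $T^hx$ gives $\bar f\circ T^h = \bar f$ a.e.\ for all $h$, whence $G$-invariance and constancy. Without this correction, the argument has a genuine gap.
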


\begin{proof}
Since one always has
\[f(g^{n+m},x) \leq f(g^n,T^{g^m} x) + f(g^m,x),\]
the a.s. convergence follows from the Subadditive Ergodic Theorem.  This also gives that the limit is invariant under the subgroup $g^\bbZ \leq G$, but to prove a.s. constancy we need invariance under the action of the whole of $G$.  To this end, observe that if $h \in G$ then
\begin{eqnarray*}
f(g^n,T^hx) &=& f\big((g^nhg^{-n})g^nh^{-1},T^hx\big) \\
&\leq& f(g^nhg^{-n},T^{g^n}x) + f(g^n,x) + f(h^{-1},T^hx).
\end{eqnarray*}
The last right-hand term here is bounded in $L^1$, and the first term has $L^1$-norm which is $\rm{O}(|g^nhg^{-n}|_G) = \rm{o}(n)$, by Proposition~\ref{prop:cc}.  Therefore, dividing by $n$ and letting $n\to\infty$, we obtain
\[\lim_{n\to\infty} \frac{1}{n}f(g^n,T^hx) \leq \lim_{n\to\infty} \frac{1}{n}f(g^n,x).\]
Since we may clearly reverse this argument, the limit is actually $G$-invariant and hence a.s. constant.

The bound by $\|f(g,\cdot)\|_1$ is obvious from the triangle inequality.
\end{proof}

\begin{proof}[Proof of Proposition~\ref{prop:as}]
Let $K\geq 1$ be the constant appearing in Proposition~\ref{prop:sla} for $(G,d_G)$, and let
\[M := 4K^2\max_{s\in B_G}\|f(s,\,\cdot\,)\|_1.\]

Let $\eps > 0$, and first choose $n_0 \geq 1$ so large that
\[\mu\{f(s^n,x) \geq 2n\|f(s,\,\cdot\,)\|_1\} \leq \eps/2K\quad\quad\forall n\geq n_0,\ s\in B_G;\]
this is possible by Lemma~\ref{lem:cgce}.

Now suppose that $g \in G$, let $n := |g|_G$ and invoke Proposition~\ref{prop:sla} to obtain a $B_G$-word
\[g = s_1^{a_1}s_2^{a_2}\cdots s_k^{a_k}\]
with $k\leq K$ and length at most $Kn$ that evaluates to $g$.  We will show that
\[\mu\{f(g,x) \geq Mn\} < \eps\]
provided only that $n$ is sufficiently large.

Using the $B_G$-word above, we have
\[\frac{1}{n}f(g,x) \leq \sum_{j=1}^k \frac{1}{n}f(s_j^{a_j},T^{s_{j+1}^{a_{j+1}}\cdots s_k^{a_k}}x) = \sum_{j=1}^k \frac{a_j}{n}\frac{1}{a_j}f(s_j^{a_j},T^{s_{j+1}^{a_{j+1}}\cdots s_k^{a_k}}x).\]
Partition the set $\{1,2,\ldots,k\}$ as $I \cup I^{\rm{c}}$ with
\[I:= \{j\in \{1,2,\ldots,k\}\,|\ a_j \geq n_0\},\]
and consider the right-hand sum above decomposed as
\[\sum_{j \in I} \frac{a_j}{n}\frac{1}{a_j}f(s_j^{a_j},T^{s_{j+1}^{a_{j+1}}\cdots s_k^{a_k}}x) + \sum_{j \in I^{\rm{c}}} \frac{a_j}{n}\frac{1}{a_j}f(s_j^{a_j},T^{s_{j+1}^{a_{j+1}}\cdots s_k^{a_k}}x).\]
We will show that each of these two sub-sums can take abnormally large values only with very small probability.

\vspace{7pt}

\emph{First term}\quad Since $j \in I$ we have $a_j \geq n_0$, and hence
\[\mu\{f(s_j^{a_j},y) \geq 2a_j\|f(s_j,\,\cdot,)\|_1\} < \eps/2K.\]
From this it follows that
\begin{eqnarray*}
&& \mu\Big\{\sum_{j \in I} \frac{a_j}{n}\frac{1}{a_j}f(s_j^{a_j},T^{s_{j+1}^{a_{j+1}}\cdots s_k^{a_k}}x) \geq M/2\Big\}\\
&& \leq \mu\Big(\bigcup_{j \in I}\Big\{\frac{1}{a_j}f(s_j^{a_j},T^{s_{j+1}^{a_{j+1}}\cdots s_k^{a_k}}x) \geq (n/a_j)M/2|I|\Big\}\Big)\\
&& \leq \sum_{j\in I} \mu\Big\{\frac{1}{a_j}f(s_j^{a_j},y) \geq 2\max_{s\in B_G}\|f(s,\,\cdot\,)\|_1\Big\}\\
&&\leq K(\eps/2K) = \eps/2,
\end{eqnarray*}
where the deduction of the third line uses that $a_j \leq Kn$ and hence
\[(n/a_j)M/2|I| \geq (n/a_j)M/2K \geq M/2K^2 = 2\max_{s\in B_G}\|f(s,\cdot)\|_1.\]

\vspace{7pt}

\emph{Second term}\quad On the other hand, if $j \in I^{\rm{c}}$, then $a_j \leq n_0$, and hence
\[\sum_{j \in I^{\rm{c}}} \frac{a_j}{n}\frac{1}{a_j}f(s_j^{a_j},T^{s_{j+1}^{a_{j+1}}\cdots s_k^{a_k}}x) \leq \frac{n_0}{n}\sum_{j \in I^{\rm{c}}} \frac{1}{a_j}f(s_j^{a_j},T^{s_{j+1}^{a_{j+1}}\cdots s_k^{a_k}}x).\]
Integrating and using the triangle inequality, this function has $L^1$-norm at most
\[\frac{n_0}{n}\cdot |I^{\rm{c}}|\cdot \frac{1}{a_j}\cdot a_j\|f(s_j,\,\cdot\,)\|_1 \leq \frac{Mn_0}{n},\]
and so Markov's Inequality gives
\[\mu\Big\{\sum_{j \in I^{\rm{c}}} \frac{a_j}{n}\frac{1}{a_j}f(s_j^{a_j},T^{s_{j+1}^{a_{j+1}}\cdots s_k^{a_k}}x) \geq M/2\Big\} \leq \frac{2n_0}{n}\]
Provided we chose $n$ sufficiently large, this is at most $\eps/2$, and so combining this with our bound for the first term gives that
\[\mu\{f(g,x) \geq Mn\} < \eps/2 + \eps/2 = \eps,\]
as required.  This completes the proof.
\end{proof}

It might be interesting to study the generalization of Proposition~\ref{prop:as} to other groups.

\begin{ques}
For which groups and word metrics $(G,d_G)$ is it the case that for any probability $G$-space $(X,\mu,T)$ and any integrable sub-cocycle $f:G\times X\to [0,\infty)$ the functions $f(g,\cdot)$ must become asymptotically stable in distribution in the sense given by Proposition~\ref{prop:as} for some $M$? \fin
\end{ques}

We will make use of Proposition~\ref{prop:as} mostly through the following.

\begin{cor}\label{cor:interpt}
Let $G$ and $H$ be f.-g. nilpotent groups with word metrics $d_G$ and $d_H$, let $(X,\mu,T)$ be a probability $G$-space, and let $\a:G\times X\to H$ be an integrable cocycle over $T$.  Then for any $\eps > 0$ and $N \in \bbN$ there is some $C = C(\eps,N)$ such that, whenever $F\subset G$ has
$|F| = N$ and is $C$-separated for the metric $d_G$, one has \[\mu\big\{d_H(\a_x(g),\a_x(g')) \leq 2Md_G(g,g')\ \forall g,g'\in F\big\} > 1-\eps,\] where $M$ is the constant of
Proposition~\ref{prop:as} for $f:= |\a|_H$. \end{cor}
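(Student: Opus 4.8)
The plan is to reduce the statement about pairs of points in $F$ to a finite union of events controlled by Proposition~\ref{prop:as}, by observing that $d_H(\a_x(g),\a_x(g'))$ is itself governed by an integrable sub-cocycle evaluated at the group element $gg'^{-1}$ (or $g'g^{-1}$). Concretely, the cocycle identity $\a_x(g) = \a(g,x) = \a(gg'^{-1},T^{g'}x)\a(g',x)$ gives $d_H(\a_x(g),\a_x(g')) = |\a(gg'^{-1},T^{g'}x)|_H = f(gg'^{-1}, T^{g'}x)$, where $f = |\a|_H$ is the integrable sub-cocycle to which Proposition~\ref{prop:as} applies. So for each \emph{ordered} pair $(g,g')$ of distinct elements of $F$, the element $\g := gg'^{-1}$ has $|\g|_G = d_G(g,g')$, and the bad event $\{d_H(\a_x(g),\a_x(g')) > 2M d_G(g,g')\}$ is the $T^{g'}$-preimage of $\{f(\g,y) > 2M|\g|_G\}$. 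Since $T^{g'}$ is measure-preserving, this bad event has the same probability as $\mu\{f(\g,y) \geq 2M|\g|_G\}$.

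Next I would make the quantitative choice of $C$. Fix $\eps > 0$ and $N$. There are at most $N^2$ ordered pairs in $F$, so it suffices to ensure each individual bad event has probability at most $\eps/N^2$; then a union bound over the $\binom{N}{2}$ unordered pairs (symmetrizing in $g,g'$ costs nothing) gives total bad probability $< \eps$, i.e. the good event has probability $> 1-\eps$. By Proposition~\ref{prop:as}, since $M \geq 1$, we have $\mu\{f(\g,y) \geq 2M|\g|_G\} \leq \mu\{f(\g,y) \geq M|\g|_G\} \to 0$ as $|\g|_G \to \infty$; hence there is a threshold $C_0 = C_0(\eps,N)$ such that $|\g|_G \geq C_0$ forces $\mu\{f(\g,y) \geq 2M|\g|_G\} \leq \eps/N^2$. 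Now if $F$ is $C$-separated with $C := C_0$, then for every pair of distinct $g,g' \in F$ we have $|gg'^{-1}|_G = d_G(g,g') \geq C = C_0$, so each bad event is small, as needed.

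The only mild subtlety — and the thing to be careful about rather than a genuine obstacle — is that Proposition~\ref{prop:as} as stated requires $T$ to be a probability $G$-space but does not presume ergodicity, whereas Lemma~\ref{lem:cgce} (used inside its proof) does; however this is not our concern, since we invoke Proposition~\ref{prop:as} as a black box and it is stated for arbitrary probability $G$-spaces. A second point worth a line: the constant $M$ in the statement is \emph{defined} to be the $M$ of Proposition~\ref{prop:as} for $f = |\a|_H$, so there is no circularity, and the factor $2$ in $2M$ is pure slack, allowing us to absorb the difference between the threshold `$\geq M|\g|_G$' in Proposition~\ref{prop:as} and the bound `$> 2M d_G(g,g')$' we want. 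In short: the proof is a union bound, with the convergence in Proposition~\ref{prop:as} supplying, for the finitely many separations $|\g|_G$ that can occur, a uniform smallness once $C$ is taken large enough.
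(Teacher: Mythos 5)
Your proposal is correct and matches the paper's proof essentially step for step: rewrite the difference via the cocycle identity and right-invariance as $|\a_\bullet(gg'^{-1})|_H$ evaluated at a translate of $x$, use $T$-invariance of $\mu$, union-bound over the $N^2$ pairs, and apply Proposition~\ref{prop:as} with tolerance $\eps/N^2$ to choose $C$. The only cosmetic difference is that the paper factors through $g$ rather than $g'$ (so it gets $|\a_{T^gx}(g'g^{-1})|_H$), which is the same computation by symmetry.
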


\begin{proof}
This follows by writing
\begin{eqnarray*}
&&\mu\big\{d_H(\a_x(g),\a_x(g')) > 2Md_G(g,g')\ \hbox{for some}\ g,g'\in F\big\}\\
&&\leq \sum_{g,g' \in F}\mu\big\{d_H(\a_x(g),\a_x(g')) > 2Md_G(g,g')\big\}\\
&&= \sum_{g,g' \in F} \mu\big\{d_H\big(\a_x(g),\a_{T^gx}(g'g^{-1})\a_x(g)\big) > 2Md_G(g,g')\big\}\\
&&= \sum_{g,g' \in F} \mu\big\{|\a_{T^gx}(g'g^{-1})|_H >
2M|g'g^{-1}|_G\big\},
\end{eqnarray*}
and now applying Proposition~\ref{prop:as} with error tolerance $\eps/N^2$.
\end{proof}

At one point, it will be more convenient to use Proposition~\ref{prop:as} through the following corollary.

\begin{cor}\label{cor:as}
In the setting of Proposition~\ref{prop:as}, and with $M$ the constant given there, it holds that for any $\eps > 0$ there is some $R = R(\eps)$ such that
\[\mu\{|f(g,x)| \geq M |g|_G + R\} < \eps \quad \forall g \in G.\]
(That is, we remove the assumption that $|g|_G$ be large by allowing an additive error.)
\end{cor}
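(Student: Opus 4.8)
The goal is to upgrade Proposition~\ref{prop:as}, which says that $\mu\{|f(g,x)| \geq M|g|_G\} \to 0$ as $|g|_G \to \infty$, to a statement that holds uniformly over all $g \in G$ at the cost of an additive error $R$. The plan is to split the set of group elements into those with large word-length and those with small word-length, handle each part separately, and combine.

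First I would fix $\eps > 0$. By Proposition~\ref{prop:as}, there is some radius $n_0 = n_0(\eps)$ such that $\mu\{|f(g,x)| \geq M|g|_G\} < \eps$ for all $g$ with $|g|_G \geq n_0$; for such $g$ we certainly have $\mu\{|f(g,x)| \geq M|g|_G + R\} < \eps$ for \emph{any} $R \geq 0$, so the large-length elements are already under control with no additive help needed. The remaining issue is the finitely-many-up-to-nothing... wait, $G$ may be infinite, but the ball $B_G(n_0)$ is finite since $G$ is f.-g.\ of polynomial growth. So there are only finitely many $g \in G$ with $|g|_G < n_0$, say $g_1, \ldots, g_N$.

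For each such $g_i$, the function $|f(g_i, \cdot)|_H = f(g_i, \cdot)$ is integrable (by hypothesis on the sub-cocycle), hence finite $\mu$-a.e., so by choosing $R_i$ large enough we get $\mu\{f(g_i, x) \geq R_i\} < \eps$, and then $\mu\{f(g_i,x) \geq M|g_i|_G + R_i\} \leq \mu\{f(g_i,x) \geq R_i\} < \eps$ since $M|g_i|_G \geq 0$. Taking $R := \max_i R_i$ (a finite maximum over the finite set $B_G(n_0)$), we get $\mu\{f(g,x) \geq M|g|_G + R\} < \eps$ for every $g$ with $|g|_G < n_0$ as well. Combining the two cases, this $R = R(\eps)$ works for all $g \in G$, which is exactly the claim.

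There is no real obstacle here: the proof is a routine two-case argument, and the only structural input beyond Proposition~\ref{prop:as} is that balls in $(G, d_G)$ are finite (so that the "small" case is a finite maximum of integrability tail bounds) — which holds because $G$ is finitely generated. I would just need to be careful that the additive error plays no role in the large-length case (it does not, since the inequality there is already strict without it) and that $M \geq 1 > 0$ is used only to ensure $M|g|_G \geq 0$ so the additive bound dominates the pure-tail bound in the small-length case.
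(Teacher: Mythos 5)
Your proof is correct and follows the same two-case decomposition as the paper: apply Proposition~\ref{prop:as} directly for $|g|_G$ large (where the additive error $R$ is superfluous), and for the finitely many $g$ in the small ball $B_G(n_0)$, use integrability of $f(g,\cdot)$ to push the tail probability below $\eps$ via a large enough $R$. The paper phrases the second step as an application of Markov's Inequality to the finite family $\{f(g,\cdot) : g \in B_G(e_G,C)\}$, which is the same estimate you invoke when you say the tail mass can be made small.
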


\begin{proof}
Proposition~\ref{prop:as} gives $C > 0$ such that if $|g|_G \geq C$ then the result holds even without $R$.  The remaining cases follow by Markov's Inequality applied to the finite collection of integrable random variables $\{f(g,\cdot)\,|\ g \in B_G(e_G,C)\}$.
\end{proof}

\section{Completion of the proof}

Now consider again two f.-g. nilpotent groups $G,H$ and their asymptotic cones $\Con G$ and $\Con H$.  It remains to prove Theorem~\ref{thm:main2}: we must find some $L > 0$ such that for each $R > 0$ there are a set $E$ and map $\phi$ with the properties asserted there.

This map $\phi$ will be obtained from the restriction of the cocycle $\a_x$ to a suitable finite subset of $G$ for a `typical' point $x$.

As usual, we fix generating sets $B_G \subset G$ and $B_H \subset H$, which will become the $1$-balls in the resulting metrics $d_G$ and $d_H$.  The sequence of renormalized metric spaces $(G,n^{-1}d_G)$ converges in the local Gromov-Hausdorff sense to $(\Con G,d^G_\infty)$
as $n\to\infty$, and similarly for $(H,n^{-1}d_H)$.  This implies that for any finite subset $E \subset \Con G$ we can find a sequence of finite subsets $E_n \subset G$, $|E_n| = |E|$, and bijections $\phi_n:E_n\to E$ such that for any $c > 1$ one has
\[c^{-1}n^{-1}d_G(\phi_n(\ol{g}),\phi_n(\ol{g}')) \leq d^G_\infty(\ol{g},\ol{g}') \leq c n^{-1}d_G(\phi_n(\ol{g}),\phi_n(\ol{g}')) \quad \forall \ol{g},\ol{g}' \in E\]
for all sufficiently large $n$, and similarly for $H$ and $\Con H$.  Let us refer to such a sequence of maps $\phi_n$ as a sequence of \textbf{asymptotic copies} of $E$.  Since $G$ and $H$ are groups, by translating if necessary we may always assume that $E \ni \ol{e}_G$, $E_n \ni e_G$ for each $n$, and $\phi_n(e_G) = \ol{e}_G$; we will refer to such $E$ and $\phi_n$ as \textbf{pointed}.

For the proof, fix $R > 0$, and let $E$ be a pointed $(1/R)$-net in $B_G^\infty(R)$ (that is, an inclusion-maximal $(1/R)$-separated subset of this ball, which is therefore also $(1/R)$-dense in the ball).  Also, let $\phi_n: E \to E_n$ be a pointed sequence of asymptotic copies of $E$.

Theorem~\ref{thm:main2} will be a consequence of the following asymptotic behaviour of the cocycle $\a$.  Recall that a sequence of events $X_n$ in a probability space $(X,\mu)$ is said to occur \textbf{with high probability} (`\textbf{w.h.p.}') in $\mu$ if $\mu(X_n) \to 1$.

\begin{thm}\label{thm:main3}
Let $M$ be the maximum of the two constants obtained by applying Proposition~\ref{prop:as} to
$\a$ and to $\b$. Then as $n\to\infty$, all of the following hold w.h.p. in $\mu_X$:
\begin{itemize}
\item[i)] $\a_x|E_n$ is $(2M)$-Lipschitz;
\item[ii)] $\a_x|E_n$ is $(4M)$-co-Lipschitz;
\item[iii)] $\a_x(E_n)$ is $(6Mn/R)$-dense in $B_H(nR/8M)$.
\end{itemize}
\end{thm}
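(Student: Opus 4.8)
{\bf Proof proposal.} The plan is to prove the three items of Theorem~\ref{thm:main3} by transferring the almost-sure estimates on the cocycles $\a$ and $\b$ supplied by Corollary~\ref{cor:interpt}, together with the growth comparison of Lemma~\ref{lem:growth-est} and the ``inversion'' relation~(\ref{eq:inv-reln}) relating $\a_x|D_x$ and $\b_x|E_x$, into statements about the finite sets $E_n\subseteq G$ and their images. Throughout, $x$ ranges in $X$, and we work with $\mu_X$; recall that $E_n$ is $(\text{roughly})$ $(n/R)$-separated in $B_G(nR)$ (it is an asymptotic copy of a $(1/R)$-net in $B_G^\infty(R)$, so for $n$ large it is $c^{-1}n/R$-separated and $cn/R$-dense there, for any $c>1$).

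\emph{Item (i): $\a_x|E_n$ is $(2M)$-Lipschitz w.h.p.} Here I would apply Corollary~\ref{cor:interpt} directly with $F := E_n$: the set $E_n$ has cardinality $N := |E|$, which is fixed (independent of $n$), and it is $C$-separated for $d_G$ once $n$ is large enough (for any prescribed $C$, since the separation grows like $n/R$). Hence for any $\eps>0$, for all large $n$ we get $\mu_X\{d_H(\a_x(g),\a_x(g'))\le 2Md_G(g,g')\ \forall g,g'\in E_n\}>1-\eps$, which is exactly ``$\a_x|E_n$ is $(2M)$-Lipschitz with probability $>1-\eps$.'' Letting $\eps\to 0$ along the asymptotic-copy construction gives the w.h.p. statement. (This is the easy case.)

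\emph{Item (ii): $\a_x|E_n$ is $(4M)$-co-Lipschitz w.h.p.} Injectivity plus the lower bound $d_H(\a_x(g),\a_x(g'))\ge \tfrac{1}{4M}d_G(g,g')$ for $g,g'\in E_n$ is the reverse inequality, and the natural route is via the inverse cocycle $\b$ and relation~(\ref{eq:inv-reln}). Recall $\a_x|D_x:D_x\to E_x$ is a bijection with inverse $\b_x|E_x$. The idea: on the event (w.h.p.) that $E_n\subseteq D_x$ and that $\a_x(E_n)$ behaves well, one has $g = \b_x(\a_x(g))$ for $g\in E_n$, so $d_G(g,g') = d_G(\b_x(h),\b_x(h'))$ where $h:=\a_x(g)$, $h':=\a_x(g')$; applying the sub-cocycle estimate of Proposition~\ref{prop:as} to $f:=|\b|_G$ (via the $\b$-analogue of Corollary~\ref{cor:interpt}, applied to the — random, but controlled — finite set $\a_x(E_n)\subseteq H$) bounds $d_G(g,g')\le 2M\,d_H(h,h')$, i.e. $d_H(\a_x(g),\a_x(g'))\ge\tfrac{1}{2M}d_G(g,g')$, even better than claimed; the looser constant $4M$ presumably absorbs the need to also control $E_n\subseteq D_x$ and the slight distortion in passing between $n^{-1}d$ and $d^\infty$. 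The subtlety — and I expect this to be \emph{the main obstacle} — is that Corollary~\ref{cor:interpt} was stated for a \emph{deterministic} finite separated set $F$, whereas $\a_x(E_n)$ depends measurably on $x$; one must either (a) first establish, w.h.p., that $\a_x(E_n)$ is itself $C'$-separated in $H$ for a suitable $C'$ (which follows from item (i) being a bi-Lipschitz-type control — wait, one needs the lower bound for that, a mild circularity to be broken by a bootstrapping/two-step argument) and then condition, or (b) re-run the proof of Proposition~\ref{prop:as}/Corollary~\ref{cor:interpt} allowing the base point of the sub-cocycle estimate to vary over a random but bounded set. I would take route (a): use that $E_n\subseteq D_x$ w.h.p.\ (an ergodicity/recurrence statement for $T$ restricted to $X\cap Y$), note separation of $\a_x(E_n)$ can be arranged at scale $\sim n$ on a large-probability event by a first pass of Corollary~\ref{cor:interpt} in the $\b$-direction applied to the \emph{fixed} lattice-like sets that $\a_x(E_n)$ must land near, and then apply the $\b$-version of Corollary~\ref{cor:interpt}.

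\emph{Item (iii): $\a_x(E_n)$ is $(6Mn/R)$-dense in $B_H(nR/8M)$ w.h.p.} This is a counting/pigeonhole argument combining (i), (ii) and Lemma~\ref{lem:growth-est}. By (i), $\a_x(E_n)\subseteq B_H(2MnR)$ roughly (each point of $E_n$ is within $nR$ of $e_G$, blown up by $2M$); by (ii) together with injectivity, the $\sim n/R$-separated set $E_n$ maps to a $\sim n/(4MR)$-separated set, so $|\a_x(E_n)| = |E_n|$ is comparable to the number of $n/R$-balls packable into $B_G(nR)$, which by doubling and Lemma~\ref{lem:growth-est} is comparable (up to the uniform constant $D$ of that lemma, feeding the constant $8M$) to the number of $n/R$-balls needed to cover $B_H(nR/8M)$. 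A point of $B_H(nR/8M)$ at distance $>6Mn/R$ from every point of $\a_x(E_n)$ would let one enlarge $\a_x(E_n)$ to a still-separated set of strictly larger cardinality sitting inside a ball whose covering number it already saturates — a contradiction. The only care needed is to make the radii $nR/8M$, the density $6Mn/R$, and the constant $D$ from Lemma~\ref{lem:growth-est} fit together with room to spare; since all of $E$, $L$, $M$ are fixed and $n\to\infty$, the slack is genuine, and the probabilistic content is entirely inherited from the w.h.p.\ validity of (i) and (ii) and of ``$E_n\subseteq D_x$''. I would organise the write-up so that (i), (ii), ``$E_n\subseteq D_x$ w.h.p.'' are proved first and independently, and (iii) is then a deterministic consequence on their intersection.
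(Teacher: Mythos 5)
Your item (i) matches the paper exactly.  For (ii) and (iii), however, the proposal has real gaps that would prevent it from closing.

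For item (ii), you correctly identify the central obstacle: Corollary~\ref{cor:interpt} applies to a \emph{fixed} finite separated set $F$, while $\a_x(E_n)$ is random.  Your proposed fix --- bootstrap separation of $\a_x(E_n)$, then appeal to a fixed lattice ``that $\a_x(E_n)$ must land near'' and apply the $\b$-version of the corollary --- is the right \emph{shape} of argument, but the actual bridge between the random point $\a_x(k)$ and a fixed nearby lattice point $h$ is missing.  Knowing $d_H(\a_x(k),h)$ is small does not by itself give any control of $d_G(k,\b_x(h))$; that implication is precisely the content of Proposition~\ref{prop:ball-compar} in the paper, which states that for fixed $g\in G$, $h\in H$, w.h.p. one cannot have $\a_x(g)$ near $h$ while $\b_x(h)$ is far from $g$.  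Its proof requires a genuinely new volume-comparison argument: one uses the mean ergodic theorem (via the auxiliary lemma on $|D_x\cap B_G(r)|/|B_G(r)|$), Lemma~\ref{lem:ball-control}, and crucially the growth comparison of Lemma~\ref{lem:growth-est} to show that $\a_x(B_G(g,R/2M))$ and $\b_x^{-1}(B_G(\b_x(h),4MR))$ must intersect, whence the triangle inequality transports the distance bound.  Without this ingredient the hand-off from a random image point to a fixed lattice point does not go through, and (ii) remains open.

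For item (iii), your pigeonhole argument is actually invalid.  From (i) and (ii) you get that $\a_x(E_n)$ is a $\sim n/(4MR)$-separated subset of a ball of radius $\sim MnR$, with $|\a_x(E_n)|=|E_n|$ comparable (only up to the constant $D$ of Lemma~\ref{lem:growth-est}) to the packing number of the target ball.  But a separated set whose cardinality is merely \emph{comparable} to a packing number need not be dense --- it can be concentrated in a sub-ball, leaving a proportionally large empty region.  The contradiction you invoke (``enlarge $\a_x(E_n)$ to a strictly larger separated set'') only obtains if you already knew $\a_x(E_n)$ was an inclusion-maximal separated set, which you do not.  The paper avoids this entirely: (iii) is proved by applying Proposition~\ref{prop:ball-compar} once more, this time with a fixed $(\sim n/R)$-lattice $F_n\subset B_H(nR/2M)$; for each $h\in F_n$ one argues w.h.p. that $\b_x(h)$ is within $n/R$ of some $g\in E_n$ and then (by ball-compar) that $\a_x(g)$ is within $5Mn/R$ of $h$.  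Since $F_n$ is a fixed finite set, a union bound gives the density conclusion.

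In short: you have the right scaffolding for (i), the right instinct for (ii) but missing the key Proposition~\ref{prop:ball-compar}, and a broken counting argument for (iii) where the paper again uses Proposition~\ref{prop:ball-compar}.  That proposition is the single essential new tool in this section and cannot be sidestepped.
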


\begin{proof}[Proof of Theorem~\ref{thm:main2} from Theorem~\ref{thm:main3}]
In addition to $E$ and $\phi_n$, let $F \subseteq B^\infty_H(4MR)$ be a pointed $(1/32MR)$-net and $\psi_n:F\to F_n\subseteq H$ a sequence of pointed asymptotic copies of it.

By properties (i) and (ii) above, as $n\to\infty$, it holds w.h.p. in $\mu_X$ that
\[\a_x(E_n) \subseteq B_G(3MRn) \quad \hbox{and} \quad \min_{g,g' \in E_n\,\rm{distinct}}d_H(\a_x(g),\a_x(g')) \geq \frac{1}{4MR}n.\]
For each $n$ and $x$, let
\[\eta_x:\a_x(E_n)\to F_n\]
be a map such that, for every $g \in E_n$, $\eta_x(\a_x(g))$ is an element of $F_n$ at minimal distance from $\a_x(g)$.  In view of the above properties of $\a_x(E_n)$, and by the density of $F$, it holds w.h.p. that $\eta_x$ is injective, and that if $g,g' \in E_n$ are distinct then
\begin{eqnarray*}
\frac{1}{2}d_H(\a_x(g),\a_x(g')) &\leq& d_H(\a_x(g),\a_x(g')) - \frac{2}{32MR}n\\ &\leq& d_H\big(\eta_x(\a_x(g)),\eta_x(\a_x(g'))\big) \\
&\leq& d_H(\a_x(g),\a_x(g')) + \frac{2}{32MR}n\\ &\leq& 2d_H(\a_x(g),\a_x(g')).
\end{eqnarray*}
Having seen this, it follows that w.h.p. in $\mu_X$ the composition
\[\phi\ :\ E \stackrel{\phi_n}{\to} E_n \stackrel{\a_x}{\to} \a_x(E_n) \stackrel{\eta_x}{\to} F_n \stackrel{\psi_n^{-1}}{\to} F\]
is both $(8M)$-Lipschitz and $(8M)$-co-Lipschitz once $n$ is sufficiently large.  Also,
\[\phi(\ol{e}_G) = \psi_n^{-1}(\eta_x(\a_x(e_G))) = \psi_n^{-1}(\eta_x(e_H)) = \psi_n^{-1}(e_H) = \ol{e}_H,\]
because $e_H$ must be the unique point of $F_n$ closest to itself.

Therefore, the proof will be completed upon showing that $\phi(E)$ is $(32M/R)$-dense in $B^\infty_H(R/32M)$.  This follows by property (iii), and the fact that $\eta_x$ does not move any point of $\a_x(E_n)$ by a distance greater than $(1/16MR)n$, which implies that $\eta_x(\a_x(E_n))$ is still $(16Mn/R)$-dense in $B_H(nR/8M)$.
\end{proof}

Property (i) of Theorem~\ref{thm:main3} follows directly from Corollary~\ref{cor:interpt}.  Properties (ii) and (iii) will need a little more work.  For these we will also need to use related estimates for the cocycle $\b$ going in the other direction.

In case our original measure coupling gives $X = Y = X\cap Y$, so that $\b_x = \a_x^{-1}$ for all $x$, property (ii) looks very like property (i) with $\a$ replaced by $\b$.  However, even in this special case, there is an extra subtlety here.  Property (ii) is asserting that
\begin{quote}
$\b_x|\a_x(E_n)$ is $(4M)$-Lipschitz.
\end{quote}
This differs from property (i) in that the relevant domain, $\a_x(E_n)$, now also depends on $x$.  This will force us to use a more careful argument than for Corollary~\ref{cor:interpt}, because we must rule out the possibility that, as $x$ varies, the set-valued function $x\mapsto \a_x(E_n)$ always happens to choose a set on which $\b_x$ behaves irregularly.  To rule this out, we will choose a new fixed set $F_n \subset H$ which is $(\delta n)$-dense for some $\delta \ll 1/R$, and show that w.h.p. the restriction $\b_x|\a_x(E_n)$ stays very close to the restriction of $\b_x$ to a set of points in $F_n$ that lie nearby the points in $\a_x(E_n)$.  On the other hand, the analog of (i) will give that $\b_x$ is $(2M)$-Lipschitz on the whole of $F_n$, and from this we can then gain control of the Lipschitz constant of its restriction to $\a_x(E_n)$, notwithstanding that dependence on $x$.  At the end of this section we will present an example showing that cocycles such as $\a_x$ can have occasional `defects' where their behaviour is very far from Lipschitz, which suggests that this extra care is really needed.

A similar comparison with $\b_x|F_n:F_n\to G$ will also underly the proof of property (iii).

The first step is the following.

\begin{lem}
Let $x\mapsto D_x$, $y\mapsto E_y$, $\mu_X$ and $\mu_Y$ be as in Subsection~\ref{subs:basics}.  Then
\[\frac{|D_x\cap B_G(r)|}{|B_G(r)|} \to \mu_X(X\cap Y) \quad \hbox{as}\ r\to\infty\]
in $L^1(\mu_X)$ (regarding the left-hand side as a function of $x$), and similarly
\[\frac{|E_y\cap B_H(r)|}{|B_H(r)|} \to \mu_Y(X\cap Y) \quad \hbox{as}\ r\to\infty\]
in $L^1(\mu_Y)$.
\end{lem}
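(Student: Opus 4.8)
The plan is to recognize the left-hand side as a Birkhoff-type ergodic average and invoke the mean ergodic theorem for the action $T:G\actson(X,\mu_X)$. First I would observe that, by the equivariance $(\ref{eq:D-equivt})$, namely $D_{T^gx}=D_x\cdot g^{-1}$, membership of $g$ in $D_x$ is controlled by the indicator of $X\cap Y$: precisely, for $g\in G$ we have $g\in D_x\iff T^gx\in X\cap Y$, so $\mathbf{1}_{D_x}(g)=\mathbf{1}_{X\cap Y}(T^gx)$. Hence
\[\frac{|D_x\cap B_G(r)|}{|B_G(r)|}=\frac{1}{|B_G(r)|}\sum_{g\in B_G(r)}\mathbf{1}_{X\cap Y}(T^gx),\]
which is exactly the average of the function $\mathbf{1}_{X\cap Y}$ over the Følner-type sets $B_G(r)$ along the orbit of $x$.

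The second step is to confirm that the balls $B_G(r)$ form a Følner sequence in $G$. Since $G$ is nilpotent, hence amenable and of polynomial growth, the word-metric balls satisfy $|B_G(r+c)|/|B_G(r)|\to 1$ for each fixed $c$ (this is immediate from polynomial growth, e.g. via Lemma~\ref{lem:growth-est} with $H=G$, or just from the fact that $|B_G(r)|\sim Cr^d$ by Pansu), and translating $B_G(r)$ by a single generator changes it only within $B_G(r+1)\setminus B_G(r-1)$; this is the standard verification that polynomial-growth balls are asymptotically invariant. Then the mean ergodic theorem (in its Følner form for amenable $G$) gives that the averages $\frac{1}{|B_G(r)|}\sum_{g\in B_G(r)}\mathbf{1}_{X\cap Y}\circ T^g$ converge in $L^1(\mu_X)$ to the conditional expectation of $\mathbf{1}_{X\cap Y}$ onto the $\s$-algebra of $T$-invariant sets. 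Since $m$ was assumed ergodic for $G\times H$, the induced action $T$ is ergodic on $(X,\mu_X)$, so this conditional expectation is the constant $\int_X\mathbf{1}_{X\cap Y}\,\d\mu_X=\mu_X(X\cap Y)$. This proves the first convergence; the second is identical with the roles of $G,H,X,Y,T,S$ interchanged, using $(\ref{eq:D-equivt})$'s analog for $E_y$.

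The only genuine subtlety — and the step I expect to require the most care — is the choice of the averaging shape and the version of the ergodic theorem being quoted. One can sidestep pointwise ergodic theorems for general amenable groups entirely: for the $L^1$-convergence claimed here it is enough to use the $L^2$ (von Neumann) mean ergodic theorem along a Følner sequence, which holds for any amenable group and any Følner sequence, combined with boundedness of $\mathbf{1}_{X\cap Y}$ to upgrade $L^2$-convergence to $L^1$-convergence. One should also note that $B_G(r)$ is indexed by a real parameter; passing to integer $r$ (or to $\lfloor r\rfloor$) and using $|B_G(\lfloor r\rfloor+1)|/|B_G(\lfloor r\rfloor)|\to 1$ handles this harmlessly. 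With these observations the argument is routine, so I would present it compactly: identify the average, cite Følner-ness of polynomial-growth balls, apply the mean ergodic theorem, and invoke ergodicity of $T$ to evaluate the limit.
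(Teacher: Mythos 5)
Your argument is correct and is essentially the paper's own proof: both identify $|D_x\cap B_G(r)|/|B_G(r)|$ as the ergodic average $\frac{1}{|B_G(r)|}\sum_{g\in B_G(r)}\mathbf{1}_{X\cap Y}(T^gx)$, use that word-metric balls in a polynomial-growth (nilpotent) group are a F\o lner sequence (the paper cites Pansu's precise volume asymptotics $|B_G(r)|\sim Cr^{d_G}$ for this), and conclude by the norm/mean ergodic theorem together with ergodicity of $T$. The extra remarks you add (the $L^2\to L^1$ upgrade via boundedness, the integer-radius discretization) are harmless elaborations of the same route.
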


\begin{proof}
For f.-g. nilpotent groups such as $G$ and $H$, another result from~\cite{Pan83} is that the polynomial growth rate of radius-$r$ balls is very exact, in the sense that $|B_G(r)|/r^{d_G}$ tends to a fixed positive limit as $r\to\infty$ for some integer $d_G > 0$, and similarly for $|B_H(r)|$.  This implies that the balls $B_G(r)$ (resp. $B_H(r)$) form a F\o lner sequence in $G$ (resp. $H$) as $r\to\infty$.  The result now follows from the Norm Ergodic Theorem for the $G$- (resp. $H$-) action and the fact that $T$ (resp. $S$) is ergodic.
\end{proof}

The next lemma asserts that once the radius $R$ is sufficiently large, for most $x$ the ball-image $\a_x(B_G(g,R)) \subset H$ must be mostly contained inside the slightly larger ball $B_H(\a_x(g),2MR)$.

\begin{lem}[Controlling images of balls]\label{lem:ball-control} Let $M$ be the maximum of the two
constants obtained by applying Proposition~\ref{prop:as} to
$\a$ and to $\b$. Then for any $\eps > 0$ and $g \in G$, the following holds w.h.p. in $\mu_X$ as $R\to\infty$:
\[\big|B_G(g,R) \cap \a_x^{-1}(B_H(\a_x(g),2MR))\big| \geq
(1-\eps)|B_G(g,R)|.\]
The same holds with the r\^oles of $(G,g,\a_x)$ and $(H,h,\b_x)$ reversed.
\end{lem}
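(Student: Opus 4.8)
The plan is to convert the statement into a first-moment estimate and finish with Markov's inequality, essentially re-running the computation from the proof of Corollary~\ref{cor:interpt} but averaging over the whole ball rather than over a fixed finite set. Fix $g \in G$ and $\eps > 0$, and for each $R > 0$ introduce the $[0,1]$-valued random variable
\[Z_R(x) := \frac{1}{|B_G(g,R)|}\,\big|\{g' \in B_G(g,R)\ :\ d_H(\a_x(g'),\a_x(g)) > 2MR\}\big|.\]
Since $B_G(g,R)\setminus \a_x^{-1}(B_H(\a_x(g),2MR))$ is exactly the set counted in the numerator, the inequality to be proved for a given $x$ is precisely the assertion $Z_R(x)\leq \eps$; so it suffices to show that $\mu_X\{Z_R\leq\eps\}\to 1$ as $R\to\infty$.

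First I would rewrite the summands using the cocycle identity: writing $g' = kg$ with $k \in B_G(e_G,R)$, one has $\a_x(g') = \a_{T^gx}(k)\a_x(g)$, hence $d_H(\a_x(g'),\a_x(g)) = |\a_{T^gx}(k)|_H$, and $g'\mapsto k$ is a bijection $B_G(g,R)\to B_G(e_G,R)$ (in particular $|B_G(g,R)| = |B_G(e_G,R)|$). Since $T^g$ preserves $\mu_X$, integrating $Z_R$ gives
\[\int_X Z_R\,\d\mu_X = \frac{1}{|B_G(e_G,R)|}\sum_{k\in B_G(e_G,R)}\mu_X\big\{|\a(k,x)|_H > 2MR\big\}.\]
I would bound the summands in two regimes. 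If $|k|_G\leq R$ then $2MR\geq M|k|_G$, so $\{|\a(k,x)|_H > 2MR\}\subseteq\{|\a(k,x)|_H\geq M|k|_G\}$; by Proposition~\ref{prop:as} applied to $\a$ (whose constant is at most $M$ by the choice of $M$), for any $\delta > 0$ there is $C = C(\delta)$ such that this last event has $\mu_X$-measure $<\delta$ whenever $|k|_G\geq C$. The remaining indices, those $k$ with $|k|_G < C$, number at most $|B_G(e_G,C)|$, a constant, and each contributes at most $1$ to the sum. Hence $\int_X Z_R\,\d\mu_X\leq |B_G(e_G,C)|/|B_G(e_G,R)| + \delta$, and since $|B_G(e_G,R)|\to\infty$ we get $\limsup_{R\to\infty}\int_X Z_R\,\d\mu_X\leq\delta$; as $\delta$ was arbitrary, $\int_X Z_R\,\d\mu_X\to 0$. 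Markov's inequality then gives $\mu_X\{Z_R > \eps\}\leq\eps^{-1}\int_X Z_R\,\d\mu_X\to 0$, which is the claim. The reversed statement follows by the identical argument with $(G,g,\a,T,\mu_X)$ replaced by $(H,h,\b,S,\mu_Y)$ throughout; this is exactly where it matters that $M$ was taken to be the larger of the two constants furnished by Proposition~\ref{prop:as}.

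I do not expect a serious obstacle here: the content is just the upgrade of the pointwise-in-$x$ conclusion of Proposition~\ref{prop:as} to an average over the ball, which is linearity of expectation together with Markov. The only point requiring a little care is that Proposition~\ref{prop:as} controls $\mu_X\{|\a(k,x)|_H > 2MR\}$ only for $k$ outside a fixed finite ball $B_G(e_G,C)$, but such $k$ form a vanishing fraction of $B_G(e_G,R)$ as $R\to\infty$, so they are harmless. The factor $2M$ in the statement (rather than $M$) is precisely the slack that lets us invoke Proposition~\ref{prop:as} with its own constant even though the threshold $2MR$ is only being compared against $|k|_G\leq R$.
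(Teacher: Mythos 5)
Your proof is correct and takes essentially the same approach as the paper: reduce to a first-moment estimate on the fraction of exceptional points in $B_G(g,R)$, use the cocycle identity and $T$-invariance of $\mu_X$ to reduce each summand to an application of Proposition~\ref{prop:as}, and finish with Markov's inequality. The only difference is that you spell out the treatment of the finitely many $k$ near the identity, which the paper leaves implicit.
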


\begin{proof}
This will follow from Markov's
Inequality if we prove instead that
\[\sum_{g' \in B_G(g,R)}\mu_X\{d_H(\a_x(g'),\a_x(g)) \leq 2MR\}
\geq \sqrt{1 - \eps}|B_G(g,R)|.\]
However, by the invariance of
$\mu_X$ and the cocycle identity for $\a$, the left-hand summands here are equal to
\[\mu_X\{|\a_{T^gx}(g'g^{-1})|_H \leq 2MR\} = \mu_X\{|\a_x(g'g^{-1})|_H \leq 2MR\}\]
for $g'\in B_G(g,R)$, and to each of these summands we may apply Proposition~\ref{prop:as}.
\end{proof}

We will now combine the estimates of the previous two lemmas into the following conclusion.  It will be the key to controlling both the typical co-Lipschitz constant of $\a_x|E_n$ and the density of its image.

\begin{prop}\label{prop:ball-compar}
For every $\eps > 0$ there exists $R_0$ such that for all $g \in G$, $h \in H$ and $R \geq R_0$ one has
\[\mu_X\{d_H(\a_x(g),h) \leq R,\ d_G(g,\b_x(h)) > 5MR\} < \eps.\]
The same holds with the r\^oles of $(G,g,\a_x)$ and $(H,h,\b_x)$ reversed.
\end{prop}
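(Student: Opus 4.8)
Here is how I would approach Proposition~\ref{prop:ball-compar}.

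The plan is to deduce the estimate from Lemma~\ref{lem:ball-control}, the density lemma above (the one controlling $|D_x\cap B_G(r)|/|B_G(r)|$), the bijection $\a_x|D_x:D_x\to E_x$ supplied by~(\ref{eq:inv-reln}), and the growth comparison of Lemma~\ref{lem:growth-est}. The only delicate point is quantitative: one wants a bound of order $MR$, not of order $M^2R$. The na\"ive route --- transport $g$ forward by $\a_x$ to a point near $h$ and then pull it back by $\b_x$ --- applies the ``blow-up-by-at-most-$M$'' control of Lemma~\ref{lem:ball-control} twice over a scale already inflated to $\approx MR$, and so only gives $\approx M^2R$. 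The fix is to work not with a ball of radius $\approx R$ about $g$ but with the much smaller ball of radius $\approx R/M$ about $g$: its $\a_x$-image then lies in a ball about $h$ whose radius is a \emph{fixed} multiple of $R$, so the single subsequent application of Lemma~\ref{lem:ball-control} to $\b_x$ on that ball costs only a bounded multiple of $MR$.

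In detail: fix $\eps>0$, set $r_0:=R/(2M)$ and $\delta:=\tfrac14\mu_X(X\cap Y)>0$, and choose a small auxiliary $\eps_1>0$ depending only on $\delta$, $M$, $G$ and $H$. Since $r_0\to\infty$ and $2R\to\infty$ as $R\to\infty$, the following three statements hold with combined $\mu_X$-probability $>1-\eps$ once $R$ is large enough, and uniformly in $g$ and $h$ (the uniformity coming from $T$- and $S$-invariance of the measures together with the equivariance~(\ref{eq:D-equivt}), which reduces balls centred at $g$ or $h$ to balls centred at the identity): (a) $|D_x\cap B_G(g,r_0)|\ge 2\delta\,|B_G(r_0)|$, by the density lemma; (b) for all but at most an $\eps_1$-fraction of $g'\in B_G(g,r_0)$ one has $d_H(\a_x(g'),\a_x(g))\le 2Mr_0=R$, by Lemma~\ref{lem:ball-control} for $\a_x$ about $g$ with radius $r_0$; and (c) for all but at most an $\eps_1$-fraction of $h'\in B_H(h,2R)$ one has $d_G(\b_x(h'),\b_x(h))\le 2M\cdot 2R=4MR$, by Lemma~\ref{lem:ball-control} for $\b_x$ about $h$ with radius $2R$.

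Now suppose $x$ lies in the event that (a)--(c) all hold and that in addition $d_H(\a_x(g),h)\le R$; I claim $d_G(g,\b_x(h))\le 5MR$, which suffices, since the bad set of the proposition is then contained in the complement of (a)$\cap$(b)$\cap$(c), of measure $<\eps$. By (a) and (b) the set $G_1:=\{g'\in D_x\cap B_G(g,r_0):d_H(\a_x(g'),\a_x(g))\le R\}$ has $|G_1|\ge(2\delta-\eps_1)|B_G(r_0)|$, and for $g'\in G_1$ we have $\a_x(g')\in B_H(\a_x(g),R)\subseteq B_H(h,2R)$ together with $\a_x(g')\in E_x$ and $\b_x(\a_x(g'))=g'$ by~(\ref{eq:inv-reln}). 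Since $\a_x|D_x$ is injective, $\a_x(G_1)$ is a subset of $B_H(h,2R)$ of cardinality $|G_1|$; on the other hand the exceptional set in (c) has cardinality at most $\eps_1|B_H(2R)|$, and Lemma~\ref{lem:growth-est} (applied with its parameter equal to $1$) together with the doubling of $d_G$ gives $|B_H(2R)|\le C\,|B_G(r_0)|$ for some constant $C=C(M,G,H)$. Having chosen $\eps_1$ small enough that $2\delta-\eps_1>\eps_1C$, we get $|\a_x(G_1)|$ larger than the size of that exceptional set, so there is $g_0'\in G_1$ with $\a_x(g_0')$ avoiding it, whence $d_G(g_0',\b_x(h))=d_G(\b_x(\a_x(g_0')),\b_x(h))\le 4MR$; since $g_0'\in B_G(g,r_0)$ and $r_0=R/(2M)\le MR$, the triangle inequality gives $d_G(g,\b_x(h))\le 5MR$. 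The reversed statement follows by interchanging $(G,g,\a_x)$ with $(H,h,\b_x)$ and $D_x$ with $E_x$ throughout.

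The step I expect to be the main obstacle is precisely this quantitative bookkeeping --- choosing the radii so that everything closes up with the \emph{linear} constant $5M$ rather than something like $M^2$ --- which is what forces the small ball $B_G(g,r_0)$ on us and makes both the density lemma (to see that $B_G(g,r_0)$ still meets $D_x$ in a definite proportion) and Lemma~\ref{lem:growth-est} (to see that this proportion of $B_G(r_0)$ outnumbers the exceptional points of $B_H(h,2R)$, whose number is governed by $|B_H(2R)|$) genuinely necessary. A minor preliminary I would dispose of at the outset is that Lemma~\ref{lem:ball-control} and the density lemma are here being used for $\b_x$ with $x\in X$ rather than for $\b_y$ with $y\in Y$; this is legitimate because on $X\cap Y$ the cocycle $\b$ lives over the same orbit equivalence relation as $\a$, with $\mu_X$ and $\mu_Y$ mutually proportional there, so the $\mu_Y$-versions of those results transfer to $\mu_X$.
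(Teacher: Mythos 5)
Your proof is correct and follows the paper's strategy in all the essentials: the same choice of the small ball $B_G(g,R/2M)$ about $g$ and the ball $B_H(h,2R)$ about $h$, Lemma~\ref{lem:ball-control} applied once to $\a_x$ and once to $\b_x$ at those radii, the $L^1$ density lemma for $D_x$, Lemma~\ref{lem:growth-est} to compare $|B_G(R/2M)|$ with $|B_H(2R)|$, and the final triangle-inequality step giving $R/2M + 4MR \le 5MR$. The one place you deviate is the concluding counting. The paper shows that the sets $\a_x(D_x\cap B_G(g,R/2M))$ and $E_x\cap B_H(h,2R)\cap \b_x^{-1}(B_G(\b_x(h),4MR))$ both sit inside $E_x\cap B_H(h,2R)$ and have combined cardinality exceeding that of their container, which requires both a lower bound for $|E_x\cap B_H(h,2R)\cap \b_x^{-1}(\cdots)|$ and an upper bound for $|E_x\cap B_H(h,2R)|$. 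You instead compare $|\a_x(G_1)|=|G_1|$ directly against the absolute bound $\eps_1|B_H(2R)|$ on the exceptional set of (c), choosing $\eps_1$ small enough (using the growth comparison) that $\a_x(G_1)$ cannot lie entirely inside it. This avoids any reference to the $E_x$-density at all, and so is a mild but genuine simplification: you need the density lemma only for $D_x$, not for $E_x$. Your remark at the end about transferring the $\b_x$-estimates from $\mu_Y$ to $\mu_X$ is also the right thing to observe; the paper glosses over this but it is handled by the extension of $\b_\bullet$ and $E_\bullet$ to a.e.\ point of $X$ carried out in Subsection~\ref{subs:basics}.
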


\begin{proof}
The key to this is a volume comparison of certain balls around $g$ and $h$ and their $\a_x$- or $\b_x$-images.  It is easiest to explain the idea in the special case $X = Y = X\cap Y$, so that $D_\bullet \equiv G$ and $E_\bullet \equiv H$.  In that case, if $R$ is large enough, then $\a_x$ typically maps most of the $(R/2M)$-ball around $g$ into the $R$-ball around $\a_x(g)$, by Lemma~\ref{lem:ball-control}.  If $d_H(\a_x(g),h) \leq R$, then that $\a_x$-ball-image will occupy a significant fraction of the $(2R)$-ball around $h$, because $d_G$ and $d_H$ are doubling and have the same growth rate (Lemma~\ref{lem:growth-est}).  Now another appeal to Lemma~\ref{lem:ball-control}, this time for $\b_x = \a_x^{-1}$, shows that the $\b_x$-image of the $(2R)$-ball around $h$ typically lands almost entirely in the $(4MR)$-ball around $\b_x(h)$.  Combining these facts, it follows that some positive fraction of $\a_x(B_G(g,R/2M))$ usually also lands in that last ball.  This implies, in particular, that $B_G(g,R/2M)$ and $B_G(\b_x(h),4MR)$ must intersect, and this then implies that $d_G(g,\b_x(h)) \leq 4MR + R/2M \leq 5MR$.

In general we argue as follows. By Lemma~\ref{lem:ball-control}, for any $\eps > 0$, all of the following events occur w.h.p. in $\mu_X$ as $R \to \infty$, uniformly in the choice of $g$ and $h$:
\[\big\{|B_G(g,R/2M)\cap \a_x^{-1}(B_H(\a_x(g),R))| \geq (1-\eps)|B_G(g,R/2M)|\big\},\]
\[\big\{|B_H(h,2R)\cap \b_x^{-1}(B_G(\b_x(h),4MR))| \geq (1 - \eps)|B_H(h,2R)|\big\},\]
\[\big\{|D_x\cap B_G(g,R/2M)| \geq (\mu_X(X\cap Y) - \eps)|B_G(g,R/2M)|\big\},\]
and
\[\big\{|E_x\cap B_H(h,R)| \geq (\mu_Y(X\cap Y) - \eps)|B_H(h,R)|\big\}.\]

We will show that on the intersection of these events, either
\[d_H(\a_x(g),h) > R\]
or
\[d_H(\a_x(g),h) \leq R \quad \hbox{and} \quad d_G(g,\b_x(h)) \leq 5MR.\]

Thus, assume that $x$ lies in this intersection and that $d_G(\a_x(g),h) \leq R$.  This implies that $B_H(h,2R) \supseteq B_H(\a_x(g),R)$, and hence
\begin{eqnarray}\label{eq:volest1}
&&|B_H(h,2R)\cap \a_x(D_x\cap B_G(g,R/2M))| \nonumber\\
&&= |\a_x^{-1}(B_H(h,2R)) \cap D_x\cap B_G(g,R/2M)| \nonumber\\
&&\geq |D_x\cap B_G(g,R/2M)| - \eps |B_G(g,R/2M)| \nonumber\\
&&\geq (\mu_X(X\cap Y) - 2\eps)|B_G(g,R/2M)|,
\end{eqnarray}
using the fact that $\a_x|D_x$ is an injection for the first equality.  Using that $\b_x|E_x$ is injective, for such $x$ one similarly obtains
\begin{multline}\label{eq:volest2}
|E_x\cap B_H(h,2R)\cap \b_x^{-1}(B_G(\b_x(h),4MR))|\\ \geq (\mu_Y(X\cap Y) - 2\eps)|B_H(h,2R)|,
\end{multline}
and finally
\begin{eqnarray}\label{eq:volest3}
|E_x\cap B_H(h,2R)| \leq (\mu_Y(X\cap Y) + \eps)|B_H(h,2R)|.
\end{eqnarray}

Now, by Lemma~\ref{lem:growth-est}, there is some $D > 0$ such that
\[|B_G(g,R/2M)| \geq D|B_H(h,2R)| \quad \forall R > 0.\]
Therefore, if $\eps$ is small enough then the sum of the right-hand sides of~(\ref{eq:volest1}) and~(\ref{eq:volest2}) is strictly greater than the right-hand side of~(\ref{eq:volest3}), implying that
\[\a_x(D_x \cap B_G(g,R/2M))\cap E_x\cap \b_x^{-1}(B_G(\b_x(h),4MR)) \neq \emptyset.\]
Letting $k =\a_x(\b_x(k))$ be an element of this set, the triangle inequality gives
\begin{eqnarray*}
d_G(g,\b_x(h)) &\leq& d_G(g,\b_x(k)) + d_G(\b_x(k),\b_x(h))\\
&\leq& R/2M + 4MR \leq 5MR,
\end{eqnarray*}
as required.
\end{proof}

\begin{proof}[Proof of Theorem~\ref{thm:main3}]
As remarked previously, property (i) follows from Corollary~\ref{cor:interpt}, so it remains to prove (ii) and (iii).

Recall that $E\subset G$ is a fixed pointed $(1/R)$-dense subset of the ball $B^\infty_G(R)$, and that $\phi_n:E\to E_n$ are pointed asymptotic copies of it.  Now choose in addition a pointed $(1/100M^2R)$-dense subset $F$ of $B^\infty_H(100M^2R)$, and a sequence $\psi_n:F\to F_n$ of pointed asymptotic copies of it.

\vspace{7pt}

\emph{Proof of (ii).}\quad Since $|E_n| = |E|$ is fixed, it will suffice to prove that for
any $\eps > 0$ there is some $n_0 > 0$ such that
\[\mu_X\{d_G(g,g') \leq 4Md_H(\a_x(g),\a_x(g'))\} > 1 - \eps\]
whenever $n \geq n_0$ and $g,g' \in E_n$ are distinct.  Letting $k := g'g^{-1}$, and using the cocycle relation, the right-invariance of the metrics, and the $T$-invariance of $\mu_X$, this measure is equal to
\begin{multline*}
\mu_X\{|k|_G \leq 4Md_H(\a_x(g),\a_{T^gx}(k)\a_x(g))\}\\ = \mu_X\{|k|_G \leq 4Md_H(\a_{T^gx}(k),e)\} = \mu_X\{|k|_G \leq 4M|\a_x(k)|_H\}.
\end{multline*}
The length $|k|_G$ lies between $n/3R$ and $3Rn$ for all sufficiently large $n$, so the result will follow if we show that
\[n/3R \leq |k|_G \leq 3Rn \quad \Longrightarrow \quad \mu_X\{|k|_G \leq 4M|\a_x(k)|_H\} > 1 - \eps\]
for all sufficiently large $n$.

To do this, observe that for $n$ sufficiently large one can find $h \in F_n$ such that $d_H(\a_x(k),h) \leq n/99M^2R$.  Since $|F_n| = |F|$ is fixed, we may combine this fact with Corollary~\ref{cor:as} and Proposition~\ref{prop:ball-compar} to deduce that the event
\begin{multline*}
\big\{\exists h \in F_n\ \hbox{such that}\ d_H(\a_x(k),h) \leq n/99M^2R,\\ d_H(k,\b_x(h)) \leq n/19MR,\ \hbox{and}\ |\b_x(h)|_G\leq 2M|h|_H + n/1000MR\big\}
\end{multline*}
occurs w.h.p. in $\mu_X$ as $n\to\infty$.  On this event, choosing a suitable $h \in F_n$, two applications of the triangle inequality give
\begin{eqnarray*}
|k|_G &\leq& |\b_x(h)|_G + d_G(k,\b_x(h)) \leq 2M|h|_H + n/1000MR + n/19MR\\ &\leq& 2M|\a_x(k)|_H + 2Md_H(\a_x(k),h) + n/18MR\\ &\leq& 2M|\a_x(k)|_H + n/10MR,
\end{eqnarray*}
and hence $2M|\a_x(k)|_H \geq |k|_G - n/10MR \geq |k|_G/2$, as required.

\vspace{7pt}

\emph{Proof of (iii).}\quad Now fix $h \in B_H(nR/2M)$, and consider its image $\b_x(h) \in G$.  Both of the following hold w.h.p. as $n\to \infty$:
\begin{itemize}
\item $|\b_x(h)|_G \leq nR$, and hence $\exists g \in E_n$ such that $d_G(g,\b_x(h)) \leq n/R$;
\item for all $g' \in E_n$, one has
\[\hbox{either} \quad d_G(g',\b_x(h)) > n/R \quad \hbox{or} \quad d_H(\a_x(g'),h) \leq 5Mn/R.\]
\end{itemize}
On the intersection of these events, it follows that
\begin{eqnarray}\label{eq:h-in-nhood}
h \in B_H(\a_x(E_n),5Mn/R)
\end{eqnarray}
Letting $\psi:F\to F_n$ be a sequence of asymptotic models for a $(Mn/2R)$-dense subset of $B^\infty_H(R/2M)$, and observing that $|F_n| = |F|$ is fixed, it follows that, w.h.p. in $\mu_X$ as $n\to\infty$, the containment~(\ref{eq:h-in-nhood}) holds simultaneously for all $h \in F_n$.  On this event, the image $\a_x(E_n)$ is $(5Mn/R + Mn/2R + \rm{o}(1)n)$-dense, hence $(6Mn/R)$-dense, in $B_H(nR/2M)$, as required.
\end{proof}

This completes the proof of our main theorems.  Before leaving this section, it is worth including an example of an IME in which the cocycle $\a_x$ exhibits occasional bad behaviour at arbitrarily large scales for a.e. $x$.  This justified the care we have taken over the proofs of properties (ii) and (iii) above.

\begin{ex}
We will construct an integrable orbit equivalence between two $\bbZ^2$-actions (so $D_\bullet = E_\bullet = \bbZ^2$).  As recalled in Subsection~\ref{subs:basics}, we can do this by constructed instead a suitable probability measure $\mu$ on
\[X := [\bbZ^2,\bbZ^2] := \{\a:\bbZ^2\to \bbZ^2\,|\ \a(0) = 0\}.\]
This measure $\mu$ should be supported on the subset of bijections, and be invariant under the action $T$ of $\bbZ^2$ defined by $T^v\a(w) = \a(w+v) - \a(v)$, which we call the \textbf{adjusted translation action}.  For an integrable orbit equivalence, it must also satisfy
\begin{eqnarray}\label{eq:integrability}
\max_{i=1,2}\int_X |\a(e_i)| + |\a^{-1}(e_i)|\,\mu(\d \a) < \infty,
\end{eqnarray}
where $e_1,e_2$ is the standard basis in $\bbZ^2$, and $|\cdot|$ is the $\ell_1$-distance on $\bbZ^2$.

This random element of $X$ will be constructed as a limit in the following way.  For each $m$, let $\rho_m$ be the law of a random subset $S_m\subseteq \bbZ^2$ in which each point is included independently with probability $4^{-m}$.  Thus, each $\rho_m$ is a translation-invariant probability on $\{0,1\}^{\bbZ^2}$.  Now, for each of these subsets $S_m$, let $\k_m:\bbZ^2\to \bbZ^2$ be the bijection defined as follows:
\begin{itemize}
 \item if $v \in S_m$ and $v + 2^me_1 \not\in S_m$, then $\k_m$ swaps $v$ and $v + 2^me_1$;
\item $\k_m$ fixes all other points.
\end{itemize}
Each $\k_m$ is a random permutation of $\bbZ^2$ with translation-invariant law.  Letting $\a_m(v) := \k_m(v) - \k_m(0)$, this defines a random element of $[\bbZ^2,\bbZ^2]$ whose law is invariant under the adjusted translation action.

Finally, letting $(\a_1,\a_2,\ldots)$ be drawn at random from the product measure $\rho:= \rho_1\otimes \rho_2\otimes \cdots$, an easy estimate shows that for any fixed $v \in \bbZ^2$ the sequence
\[\a_M\circ \cdots \circ \a_3\circ \a_2\circ \a_1(v), \quad M=1,2,\ldots\]
is eventually constant with probability $1$ in the choice of $(\a_m)_m$.  Calling its eventual value $\a(v)$, this defines a random map $\bbZ^2 \to \bbZ^2$ which is a.s. bijection, sends $0$ to $0$, and has law that is invariant under the adjusted translation action.  Also, it satisfies
\[\rho\big\{(\a_{m'})_{m'}\,\big|\ |\a(e_i)| > 2^m\big\} < \sum_{m'\geq m}4^{-m'} \leq 4^{-m+1}\]
for $i=1,2$, and similarly for $\a^{-1}$, from which~(\ref{eq:integrability}) follows.

Finally, however, observe that for each $m$, in the box $[-2^{m+1},2^{m+1}]^2$, which contains roughly $4^{m+2}$ points, one has a positive probability that $\a_m$ will move at least one point by distance $2^m$.  Using the independence of $\a_1$, $\a_2$, \ldots under $\rho$, a simple Borel-Cantelli argument now implies that with $\rho$-probability $1$, $\a$ has the following property:
\begin{quote}
 There is an infinite sequence of scales $m_1 < m_2 < \ldots$ and, for every $i$, a pair of points $u,v \in [-2^{m_i+1},2^{m_i+1}]$ such that $|u-v| = 1$ but $|\a(u) - \a(v)| \geq 2^{m_i}$.
\end{quote}
Thus, at every length scale, there can be a few pairs of neighbouring points at which $\a$ is as `far from Lipschitz' as it could be.  The point of Proposition~\ref{prop:ball-compar} was to show that these bad points are so rare that we can simply work around them in Theorem~\ref{thm:main3}.  It is worth contrasting this with the arguments of~\cite{deC11}, which also construct bi-Lipschitz maps between cones from non-quasi-isometries between groups, but require a more uniform control on the bad behaviour of those maps of the groups. \fin
\end{ex}

\section{Remaining issues}

Most obviously, it would be interesting to know whether the results of this paper extend beyond the class of virtually nilpotent groups (I am confident that the methods do not).

\begin{ques}
For which pairs of amenable groups does an IME imply bi-Lipschitz-equivalent asymptotic cones (for some non-principal ultrafilters)?
\end{ques}

Among nilpotent groups, Theorem~\ref{thm:main} suggests another interesting line of enquiry.   For simplicity, consider a case in which $G$ and $H$ are both quasi-isometric to their asymptotic cones, say via maps $\phi:\Con G\to G$ and $\psi:H \to \Con H$.  Recall~(\cite{Pan83}) that the asymptotic cones are graded connected nilpotent Lie groups equipped with dilations $\delta_{\Con G}$ and $\delta_{\Con H}$.  Given an integrable measure equivalence implemented by the cocycles $\a$ and $\b$ as before, for each $x \in X$ and $n \geq 1$ one can consider the map
\[\kappa_{x,n}:\Con G\to \Con H:\bar{g} \mapsto \delta_{\Con H}^{1/n}(\psi(\a_x(\phi(\delta_{\Con G}^n(\bar{g}))))).\]

\begin{ques}
Is it true that for $\mu$-a.e. $x \in X$, $\kappa_{x,n}$ converges (say, in probability on bounded subsets of $\Con G$) to a bi-Lipschitz isomorphism of groups $\Con G\to \Con H$?
\end{ques}

If true, this would amount to a kind of `nilpotent-valued' version of the Pointwise Ergodic Theorem.  It has the flavour of a large-scale analog for cocycles of the problem of proving an analog of Rademacher's Theorem for Lipschitz maps between Carnot-Carath\'eodory metrics.  Such a differentiation theorem has been studied by Pansu in~\cite{Pan89} and Margulis and Mostow in~\cite{MarMos95}.

\appendix

\section{Approximation with straight-line segments}\label{app:nilpcalc}

The proof of Proposition~\ref{prop:sla} requires some preparations.  Let $G^1 := G$ and $G^{i+1} := [G,G^i]$ denote the descending central series of $G$, so that $G^{c+1} = \{e_G\}$ if $c$ is the nilpotency class of $G$.  The following requires only a routine calculation with commutators.

\begin{lem}\label{lem:gen-Gm}
If $B_G$ is a finite symmetric generating set for $G$, then for each $m \in \{2,3,\ldots,c\}$ a generating set for $G^m$ is given by set of the $m$-fold commutators
\[[s_1,[s_2,[\cdots[s_{m-1},s_m]\cdots]]],\quad s_1,s_2,\ldots,s_m \in B_G\]
and their inverses. \qed
\end{lem}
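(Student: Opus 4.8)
The plan is to prove the statement by induction on $m$, splitting it into a ``modulo $G^{m+1}$'' part, which is the routine bilinear commutator computation the paper alludes to, and a ``no higher weights are needed'' part, which is where the real work lies. Throughout, write ${}^xy := xyx^{-1}$ and $[x,y] := xyx^{-1}y^{-1}$, and let $H_m \leq G$ denote the subgroup generated by the $m$-fold commutators $[s_1,[\cdots[s_{m-1},s_m]\cdots]]$ of elements of $B_G$; since it is a subgroup it automatically contains their inverses, so $H_m$ is exactly the subgroup named in the lemma and $H_m \subseteq G^m$ is clear.

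\textbf{Step 1 (generation modulo $G^{m+1}$).} Recall the two standard identities $[xy,z] = {}^x[y,z]\cdot[x,z]$ and $[x,yz] = [x,y]\cdot{}^y[x,z]$, together with $[G^i,G^j]\subseteq G^{i+j}$. These show that $([g],[h]) \mapsto [[g,h]]$ descends to a well-defined $\mathbb Z$-bilinear map $(G/G^2)\times(G^{m-1}/G^m) \to G^m/G^{m+1}$. Since $G^m = [G,G^{m-1}]$ is generated by the elements $[g,h]$ with $g\in G$, $h\in G^{m-1}$, and since $G/G^2$ is generated by the images of $B_G$ while $G^{m-1}/G^m$ is generated by the images of the $(m-1)$-fold commutators of elements of $B_G$ (induction on $m$, the case $m-1=1$ being immediate), bilinearity shows $G^m/G^{m+1}$ is generated by the images of the $[s,w]$ with $s\in B_G$ and $w$ an $(m-1)$-fold commutator --- i.e. exactly by the images of the $m$-fold commutators. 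Equivalently, $G^m = H_m\cdot G^{m+1}$.

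\textbf{Step 2 (reduction to normality).} Iterating Step 1 and using nilpotency, $G^m = H_mG^{m+1} = H_mH_{m+1}G^{m+2} = \cdots = H_mH_{m+1}\cdots H_c$ since $G^{c+1}=\{e_G\}$. So it remains to absorb the higher factors, and by downward induction this reduces to showing $H_{m+1}\subseteq H_m$: that every $(m+1)$-fold commutator $[s_0,w]$ of elements of $B_G$ (with $w$ an $m$-fold commutator) lies in $H_m$. But $[s_0,w] = ({}^{s_0}w)\,w^{-1}$ and $w^{\pm1}\in H_m$, so this is equivalent to $H_m$ being normalized by $B_G$, i.e. to $H_m$ being normal in $G$.

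\textbf{Step 3 (the hard part: $H_m\trianglelefteq G$), and a remark.} This is the one genuinely delicate point, and it is where the symmetry of $B_G$ is used essentially (a single $s$ rather than the pair $\{s,s^{-1}\}$ does not suffice). I would argue by induction on $m$: it is enough to check ${}^sw\in H_m$ for $s\in B_G$ and $w=[t_1,w']$ an $m$-fold commutator, and then ${}^sw = [\,{}^st_1,\,{}^sw'\,]$, where ${}^st_1 = [s,t_1]\,t_1$ with $[s,t_1]\in H_2$, and ${}^sw'\in H_{m-1}$ by the inductive hypothesis --- the point being that identities like $[s^{-1},u] = {}^{s^{-1}}\!\big([s,u]^{-1}\big)$ let one replace a generator by its inverse and still land inside the list of commutators of elements of $B_G$. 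Expanding the resulting commutator with the two identities of Step 1 writes ${}^sw$ as a product of conjugates of $m$-fold commutators of elements of $B_G$, and one pushes the remaining conjugations off into $G^{m+1}\subseteq H_mH_{m+1}\cdots H_c$, closing the induction via nilpotency. The only thing needing care is the bookkeeping here --- ensuring symmetry really does keep all commutators that appear inside the stated list, and that the stray $G^{m+1}$-terms get reabsorbed rather than proliferating. If that bookkeeping becomes unpleasant one can instead prove only the weaker statement that $G^m/G^{m+1}$ is generated by the images of the $m$-fold commutators (Step 1 alone), which is all that the proof of Proposition~\ref{prop:sla} actually invokes downstream.
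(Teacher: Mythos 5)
Your Step~1 is correct, and it is worth noting up front that it is in fact all the paper uses downstream: in the proof of Proposition~\ref{prop:sla} the lemma is invoked only for $h \in G^m$ where $m$ is the nilpotency class of $G$, so $G^{m+1}=\{e_G\}$ and Step~1 alone already gives $G^m = H_m$. Your closing remark to this effect is accurate and is the safest reading of why the paper labels the lemma ``routine'' (the paper itself supplies no proof). Step~2 is also a correct reduction.

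The difficulty is Step~3, and I don't think the sketch closes. You propose an upward induction: assuming $H_{m-1}\trianglelefteq G$, expand ${}^sw = [{}^s t_1, {}^s w']$ with ${}^s w' \in H_{m-1}$ and ${}^s t_1 = [s,t_1]t_1$. Carrying this expansion through the two identities, what one actually obtains is that ${}^s w$ is a product of conjugates of $m$-fold commutators of $B_G$ together with error terms such as $[[s,t_1],{}^sw']$ lying in $G^{m+1}$; since the conjugating elements lie in $G^2$ and $G^{m-1}$, the conjugation defects also lie in $G^{m+1}$. The net conclusion is only ${}^sw \in H_m\,G^{m+1}$. To promote this to ${}^sw \in H_m$ one needs $G^{m+1}\subseteq H_m$, i.e.\ $H_{m+1}\subseteq H_m$; but that inclusion is precisely what Step~2 was deferring to normality of $H_m$, which is what Step~3 is trying to establish. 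So the upward induction on $m$ and the downward absorption in Step~2 feed each other, and the argument as sketched is circular. The phrase ``pushes the remaining conjugations off into $G^{m+1}\subseteq H_mH_{m+1}\cdots H_c$'' makes this visible: $H_mH_{m+1}\cdots H_c$ is just $G^m$, so nothing new has been obtained. The symmetry of $B_G$ and identities like $[s^{-1},u]={}^{s^{-1}}([s,u]^{-1})$ are genuinely needed (without symmetry the statement is false already for the free class-$3$ group on two generators), but they do not by themselves break the circularity; one needs some further input, e.g.\ a careful double induction on nilpotency class or a Hall-basis/Jacobi-identity bookkeeping that tracks the $G^{m+1}$-error terms and shows they lie in $H_m$. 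As written, Step~3 asserts rather than proves this, so the proposal establishes the lemma only modulo $G^{m+1}$ (Step~1), not in the full form stated.

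Since only the $m=c$ case is used in the paper, the gap does not affect the paper's argument; but as a proof of the lemma as stated, Step~3 needs to be replaced by a genuine argument.
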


The next calculation is slightly less standard, so we include a proof for completeness.  A similar calculation in the setting of a nilpotent Lie algebra appears as Lemma 4.1 in Pittet~\cite{Pit97} and (see also Pansu~\cite{Pan83}).

\begin{lem}\label{lem:comms-manip}
If $G$ is a nilpotent group and $s_1,s_2,\ldots,s_m \in G$ then for all $n\geq 1$ one has
\[[s^n_1,[s^n_2,[\cdots[s^n_{m-1},s^n_m]\cdots]]] = \big([s_1,[s_2,[\cdots[s_{m-1},s_m]\cdots]]]\big)^{n^m}\cdot r_n\]
where $r_n \in G^{m+1}$.
\end{lem}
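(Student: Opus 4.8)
The plan is to prove this by induction on the nilpotency class $c$ of $G$, or equivalently by a downward induction on the commutator depth, handling the "leading term" $[s_1,[\cdots[s_{m-1},s_m]\cdots]]^{n^m}$ and absorbing all lower-order corrections into $G^{m+1}$. First I would record the basic identities I intend to use repeatedly: for $a,b$ in a group, $[ab,c] = {}^a[b,c]\cdot[a,c]$ and $[a,bc] = [a,b]\cdot{}^b[a,c]$, where ${}^ax := axa^{-1}$; and the key consequence that if $z \in Z$ is central (or, more usefully, if a commutator lands in a term $G^j$ that is central modulo $G^{j+1}$), then conjugates of $z$ differ from $z$ only by elements of $G^{j+1}$. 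In particular, in the quotient $G/G^{m+1}$ the subgroup $G^m/G^{m+1}$ is central, so all the manipulations involving the innermost commutator can be carried out "additively" modulo $G^{m+1}$.

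The cleanest route is a two-layer induction. Inner induction: fix $m$ and show by induction on $n$ that $[s_1^n,[s_2^n,[\cdots[s_{m-1}^n,s_m^n]\cdots]]]$ equals the claimed $n^m$-th power times an element of $G^{m+1}$. For the base case $n=1$ this is trivial. For the inductive step I would first establish the single-commutator version: $[a^n,b^n] = [a,b]^{n^2} \cdot (\text{element of } G^{j+2})$ when $a \in G^{j}$-ish — more precisely, expanding $[a^n,b^n]$ using the product formulas above produces a product of $n^2$ conjugates of $[a,b]$, and since $[a,b] \in G^{m}$ when $a,b$ range suitably, and $G^m$ is central mod $G^{m+1}$, each conjugate equals $[a,b]$ mod $G^{m+1}$, giving $[a,b]^{n^2}$ mod $G^{m+1}$. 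One then peels off one layer of the nested commutator at a time: write $c_{m} := [s_2^n,[\cdots[s_{m-1}^n,s_m^n]\cdots]]$, apply the inner inductive hypothesis to rewrite $c_m = c^{n^{m-1}}\cdot r$ with $c := [s_2,[\cdots[s_{m-1},s_m]\cdots]] \in G^{m-1}$ and $r \in G^{m}$, and then compute $[s_1^n, c^{n^{m-1}} r]$. Since the whole expression lies in $G^m$ and we only care about it mod $G^{m+1}$, the factor $r$ and all conjugations can be dropped mod $G^{m+1}$, and $[s_1^n, c^{n^{m-1}}] = [s_1,c]^{n \cdot n^{m-1}} = [s_1,c]^{n^m}$ mod $G^{m+1}$ by the single-commutator computation. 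This is $[s_1,[s_2,[\cdots[s_{m-1},s_m]\cdots]]]^{n^m}$, as desired.

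The step I expect to be the main obstacle is bookkeeping: making precise the claim that "conjugation acts trivially mod $G^{m+1}$" at exactly the right level, and verifying that the error terms generated at each peeling step genuinely lie in $G^{m+1}$ rather than merely in some $G^j$ with $j \le m$. The standard fix is to do everything inside $G/G^{m+1}$ from the outset: there $G^m$ is a central subgroup, all commutators $[x,y]$ with (say) $x$ of weight $\geq m-1$ and $y$ of weight $\geq 1$ are central, conjugation by anything fixes them, and the commutator is bilinear on the relevant graded pieces — so $[a^n, b] = [a,b]^n = [a, b^n]$ holds on the nose in that quotient. Lifting back, every discrepancy is an element of $G^{m+1}$, which is precisely $r_n$. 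I would also remark that the exponent $n^m$ is forced by counting: the nested $m$-fold commutator is multilinear of total degree $m$ in its (graded) arguments, and each argument is raised to the $n$, giving $n^m$. No separate reference is needed since this is exactly the elementary commutator calculus already invoked for Lemma~\ref{lem:gen-Gm}, and the Lie-algebra analogue is Lemma 4.1 of Pittet~\cite{Pit97} as noted.
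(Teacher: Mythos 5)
Your proposal is correct and essentially the same as the paper's argument: both induct on the commutator depth $m$, reduce modulo $G^{m+1}$ so that $G^m$ becomes central, apply the inductive hypothesis to the inner depth-$(m-1)$ nest, absorb the central error term, and then evaluate the remaining single commutator $[s_1^n, g^{n^{m-1}}]$ by the standard shuffle that produces $[s_1,g]^{n^m}$ modulo $G^{m+1}$. The only small wrinkle is your framing of the "inner induction on $n$" --- the argument you actually carry out is the paper's induction on $m$ with $n$ fixed throughout, and the ``single-commutator version'' you need is $[a^n,b^{n'}]\equiv[a,b]^{nn'}$ rather than $[a^n,b^n]\equiv[a,b]^{n^2}$ --- but these are expository slips, not gaps.
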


\begin{proof}
We fix $n\geq 1$ and prove this assertion by induction on $m$.  For each $m$, it suffices to treat the case when $G$ has nilpotency class at most $m$, since for general $G$ we may simply lift the desired result from the quotient $G/G^{m+1}$ (because $r \in G^{m+1}$ is allowed to be arbitrary).

The result is trivial when $m=1$, so assume it is known for some $m$ and consider $s_1,s_2,\ldots,s_{m+1}\in G$, where $G$ has class at most $m+1$.  The inductive hypothesis gives
\[[s^n_2,[s^n_3,[\cdots[s^n_m,s^n_{m+1}]\cdots]]] = \big([s_2,[s_3,[\cdots[s_m,s_{m+1}]\cdots]]]\big)^{n^m}\cdot r\]
for some $r \in G^{m+1}$, so $r$ is central in $G$.  Let
\[g := [s_2,[s_3,[\cdots[s_m,s_{m+1}]\cdots]]],\]
so this is in $G^m$, and now insert the above expression into the commutator with $s_1^n$ to obtain
\begin{multline*}
[s^n_1,[s^n_2,[\cdots[s^n_m,s^n_{m+1}]\cdots]]] = [s_1^n,g^{n^m}r]\\ = s_1^ng^{n^m}rs_1^{-n}g^{-n^m}r^{-1} = s_1^ng^{n^m}s_1^{-n}g^{-n^m},
\end{multline*}
where the last equality uses that $r$ is central.  Now each appearance of $s_1$ on the left end of this word may be moved through the sub-word $g^{n^m}$ to cancel an appearance of $s_1^{-1}$, creating $n^m$ copies of the commutator $[s_1,g]$.  Since that commutator is in $G^{m+1}$ and so is central, it may be placed at the far left end of the resulting word.  Repeating this manipulation $n$ times, we obtain
\begin{multline*}
s_1^ng^{n^m}s_1^{-n}g^{-n^m} = [s_1,g]^{n^m}s_1^{n-1}g^{n^m}s_1^{-(n-1)}g^{-n^m}\\ = [s_1,g]^{2\cdot n^m}s_1^{n-2}g^{n^m}s_1^{-(n-2)}g^{-n^m} = \ldots = [s_1,g]^{n^{m+1}}.
\end{multline*}
This is the desired expression, so the induction continues.
\end{proof}

In order to make use of these results, we also need the following important calculation relating the word metrics of a f.-g. nilpotent group and of one of its subgroups.  In fact, it is a special case of a rather more general results on the possible distortions of the word metrics on subgroups of nilpotent groups, obtained by Osin as Theorem 2.2 in~\cite{Osi01}; see also Pittet~\cite{Pit97} and Subsection 3.B$_2$ of Gromov~\cite{Gro93}. 

\begin{lem}\label{lem:subgp-dist}
If $G$ is a f.-g. nilpotent group of nilpotency class $m$, and $B$ and $B'$ are finite generating sets of $G$ and $G^m$ respectively, then there is some constant $C$ such that
\[|h|_{B'} \leq C|h|_B^m\quad \forall h \in G^m.\] \qed
\end{lem}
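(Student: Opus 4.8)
The plan is to recast the statement as a counting problem --- bounding the number of elementary commutator letters needed to spell an element of $G^m$ --- and to extract the count from the classical collecting process of P.~Hall. First I would choose a convenient generating set for $G^m$. Since any two finite symmetric generating sets of the finitely generated group $G^m$ induce bi-Lipschitz word metrics, and since by Lemma~\ref{lem:gen-Gm} the $m$-fold commutators $c_{\mathbf{s}} := [s_1,[s_2,[\cdots[s_{m-1},s_m]\cdots]]]$, for $\mathbf{s} = (s_1,\dots,s_m) \in B^m$, together with their inverses generate $G^m$, it is enough to prove $|h|_{B'_0} \le C n^m$ for $h \in G^m$ with $|h|_B = n$, where $B'_0 = \{c_{\mathbf{s}}^{\pm 1} : \mathbf{s} \in B^m\}$. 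Moreover, because $G$ has class $m$ we have $G^{m+1} = \{e_G\}$, so $G^m$ is central --- in particular abelian --- and $|h|_{B'_0}$ is exactly the minimal $\ell^1$-norm of an integer tuple $(k_{\mathbf{s}})$ with $h = \prod_{\mathbf{s}} c_{\mathbf{s}}^{k_{\mathbf{s}}}$.

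Next I would extend $B$ to a Hall system of basic commutators $c_1,\dots,c_r$, of weights $1 = w_1 \le w_2 \le \dots \le w_r = m$, arranged so that $\{c_j : w_j \ge i\}$ generates $G^i$ for every $i$; the weight-$m$ members are then precisely a subset of the $c_{\mathbf{s}}$ above. The engine is the standard quantitative form of the collecting process: any word in $B \cup B^{-1}$ of length $n$ can be rewritten in the collected normal form $c_1^{e_1} c_2^{e_2} \cdots c_r^{e_r}$ with $|e_j| \le C_1 n^{w_j}$ for every $j$, where $C_1$ depends only on $G$ and $B$. (This is the same mechanism that underlies the Bass--Guivarc'h growth formula and Osin's distortion bounds; the tidiest route is to first pass to a torsion-free subgroup of finite index --- which changes neither side of the desired inequality by more than a bounded factor, since the lower central series terms of a finite-index subgroup have finite index in the corresponding terms of $G$ --- and then to run the process in Mal'cev coordinates, the exponent bounds following by a descending induction on weight.)

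Finally I would apply this to $h \in G^m$ with $|h|_B = n$: collecting a length-$n$ word representing $h$ yields $h = c_1^{e_1} \cdots c_r^{e_r}$ with $|e_j| \le C_1 n^{w_j}$. Passing to $G/G^m$ annihilates the weight-$m$ factors, so $\prod_{w_j < m} c_j^{e_j}$ already lies in $G^m$; in the torsion-free reduction, uniqueness of Mal'cev coordinates forces $e_j = 0$ for all $j$ with $w_j < m$ (in general it forces the weight-$(<m)$ part into a fixed finite subset, costing only $O(1)$ letters of $B'_0$). Hence, up to a bounded correction, $h = \prod_{w_j = m} c_j^{e_j}$; each weight-$m$ basic commutator is one of the $c_{\mathbf{s}}$, so costs a single $B'_0$-letter per unit of exponent, and $|e_j| \le C_1 n^m$, giving $|h|_{B'_0} \le C n^m$. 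The main obstacle is bookkeeping rather than ideas: making the collecting-process exponent estimate precise and disposing of torsion cleanly. As the text notes, one may instead simply quote Osin's Theorem~2.2 in~\cite{Osi01}, of which this is the special case where the subgroup is $G^m$; Lemma~\ref{lem:comms-manip} shows that the exponent $m$ here cannot be improved.
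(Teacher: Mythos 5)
The paper offers no proof of this lemma: the \verb|\qed| after the statement signals that it is quoted as known, with the surrounding text citing Osin's Theorem~2.2 in~\cite{Osi01}, Pittet~\cite{Pit97}, and Gromov~\cite[3.B$_2$]{Gro93}. You note this option yourself at the end, so in that sense the citation routes agree; but your main proposal is a genuine proof sketch via the collecting process, and it has a real gap beyond bookkeeping.

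The problematic step is ``in the torsion-free reduction, uniqueness of Mal'cev coordinates forces $e_j = 0$ for all $j$ with $w_j < m$.'' The collected normal form $c_1^{e_1}\cdots c_r^{e_r}$ you obtain comes from lifting a length-$n$ word for $h$ to the free class-$m$ nilpotent group $F$ on $B$ and collecting there; the $c_j$ are \emph{all} the basic $B$-commutators of weight $\le m$. For the weight-$<m$ exponents to vanish you would need $\{c_j : w_j < m\}$ to be a Mal'cev basis of $G/G^m$ \emph{and} the lift $\tilde h$ to lie in $F^m$. Neither holds in general: the lift only lies in $\pi^{-1}(G^m) = F^m\cdot\ker\pi$, and whenever $B$ is redundant (e.g.\ $G$ the discrete Heisenberg group with $B=\{a,b,ab\}$), $\ker\pi\not\subseteq F^m$, so the weight-$<m$ block $\prod_{w_j<m}c_j^{e_j}$ is a nontrivial element of $G^m$ with exponents of size up to $C_1 n^{m-1}$. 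Bounding the $B'$-length of \emph{that} element is exactly the original problem: its $B$-length is $O(n^{m-1})$, so the naive recursion gives $O(n^{m(m-1)})$, not $O(n^m)$. The torsion-free finite-index reduction does not help with this, since the obstruction is the non-injectivity of $F/F^m \to G/G^m$, not torsion; it also introduces a secondary issue, as $h\in G^m$ need not lie in $G_1^m$ and must be corrected by a coset representative. To make your strategy work one has to run the collecting/normal-form estimate directly against a genuine Mal'cev basis of $G$ (or of a torsion-free finite-index subgroup) rather than against all basic commutators, establish $|e_j|\le C_1 n^{w_j}$ for those coordinates, and only then read off the conclusion --- which is essentially what Osin and Pittet do. As written, the crucial reduction to ``only weight-$m$ coordinates survive'' is asserted rather than proved.
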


\begin{proof}[Proof of Proposition~\ref{prop:sla}]
This follows from an induction on the nilpotency class of $G$.  When $G$ is Abelian the result is trivial, so suppose that $G$ has class $m\geq 2$.  Let $B \subseteq G$ be any finite symmetric generating set, let $\bar{B}$ be its image under the quotient map $G\to G/G^m$, and let $K$ be the constant implied by our assumption of Proposition~\ref{prop:sla} for $(G/G^m,d_{\bar{B}})$.  Let $g \in G$, and let $\bar{g} = gG^m$.  Then $\bar{B}$ is finite, symmetric and generates $G/G^m$, and clearly $|\bar{g}|_{\bar{B}} \leq |g|_B$, so by the inductive hypothesis there are $s_1,\ldots,s_k \in B$ and $a_1$, $a_2$, \ldots, $a_k \geq 0$ such that $k \leq K$, $\sum_i a_i \leq K|g|$ and
\[\bar{g} = \bar{s}_1^{a_1}\bar{s}_2^{a_2}\cdots \bar{s}_k^{a_k}.\]

Lifting back to $G$, this becomes
\[g = s_1^{a_1}s_2^{a_2}\cdots s_k^{a_k}\cdot h\]
for some
\[h = s_k^{-a_k}s_{k-1}^{-a_{k-1}}\cdots s_1^{-a_1}g \in G^m.\]
It follows that $|h|_B \leq (K+1)|g|_B$, and by Lemma~\ref{lem:gen-Gm} it may be expressed as a word in the $m$-fold commutators
\[[u_1,[u_2,[\cdots[u_{m-1},u_m]\cdots]]],\quad u_1,u_2,\ldots,u_m \in B\]
and their inverses.  Let $B'$ be the set of these commutators and their inverses.

Lemma~\ref{lem:subgp-dist} promises some constant $C$ such that
\[|h|_{B'} \leq C|h|_B^m\quad\forall h \in G_m.\]
Let $K'$ be the constant promised by the statement our proposition for the Abelian group $(G^m,d_T)$. Since $|h|_B \leq (K+1)|g|_B$, it follows that $h$ may be expressed as
\[t_1^{b_1}t_2^{b_2}\cdots t_\ell^{b_\ell}\]
for some distinct $t_1$, $t_2$, \ldots, $t_\ell \in B'$ and $b_1,\ldots,b_\ell \geq 0$ such that $\sum_i b_i \leq CK'(K+1)|g|^m$.

Now recall that according to the Hilbert-Waring Theorem, there is some $L \geq 1$ such that any positive integer may be written as a sum of at most $L$ perfect $m^{\rm{th}}$ powers.  Applying this to each $b_i$, we may instead express $h$ as a word
\[v_1^{n^m_1}v_2^{n^m_2}\cdots v_\ell^{n^m_\ell},\]
where now the $v_i$ are (not necessarily distinct) elements of $B'$, each $n_i$ is at most $(CK'(K+1))^{1/m}|g|_B$, and $\ell \leq L|B'|$.

Finally, if
\[v = [u_1,[u_2,[\cdots[u_{m-1},u_m]\cdots]]] \in B'\]
then Lemma~\ref{lem:comms-manip} enables one to write $v^{n^m}$ as
\[[u^n_1,[u^n_2,[\cdots[u^n_{m-1},u^n_m]\cdots]]].\]
Inserting such multiple commutators into the place of each power $v_i^{n_i^m}$ appearing in the word for $h$ above, it follows that $h$ can be expressed as a product of powers of elements of $B$ in which each power is at most $(K'(K+1))^{1/m}|g|_B$, and the number of powers appearing in the product is at most $4^mL|B'|$.  This completes the proof.
\end{proof}

\section{$L^1$-measure equivalence and group growth}

\begin{center}
by Lewis Bowen\footnote{supported in part by NSF grant DMS-0968762 and NSF CAREER Award DMS-0954606}
\end{center}

\def\gr{{\textrm{gr}}}
\def\cc{{\curvearrowright}}

\subsection{Introduction}

\begin{dfn}[Weak equivalence]\label{defn:weak}
Let $f,g$ be two real-valued functions of the natural numbers. We write $f \lesssim g$ if there are positive constants $C_1,C_2$ such that $f(n) \le C_1 g(C_2n)$  for all sufficiently large $n$. We say $f$ and $g$ are {\em weakly equivalent}, denoted $f \approx g$, if $f \lesssim g$ and $g \lesssim f$. 
\end{dfn}

Let $G$ be a finitely generated group. Let $\gr_G(n)=|B_G(e,n)|$ be the number of elements in the ball of radius $n$ of $G$ (with respect to a fixed word metric). The function $\gr_G$ depends on the choice of generating set only up to weak equivalence. Its weak equivalence class is called the {\em degree of growth} of $G$. This notion was introduced by A. S. Schwarz (spelled also as Schvarts and \v Svarc) \cite{Sva55} and independently by Milnor \cite{Mil68, Mil68b}. For a recent survey on growth of groups, see \cite{Gri13}.


Our main result is:

\begin{thm}\label{thm:growth2}
Let $G$, $H$ be two finitely generated IME groups. Then $\gr_G \approx \gr_H$. 
\end{thm}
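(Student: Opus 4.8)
The plan is to reduce the theorem to a volume-comparison estimate for the two cocycles $\a:G\times X\to H$ and $\b:H\times Y\to G$ coming from an integrable measure coupling $(\Omega,m)$, and then to run a Fubini-type counting argument. First I would fix an ergodic integrable coupling, pass to the associated finite-measure-preserving actions $T:G\curvearrowright (X,\mu_X)$ and $S:H\curvearrowright(Y,\mu_Y)$, and arrange $m(X\cap Y)>0$, so that the inverse relation $\b(\a(g,x),x)=g$ and the identification $S^{\a(g,x)}x=T^gx$ from~(\ref{eq:inv-reln}) are available, together with the extended objects $x\mapsto(\a_x,D_x)$ and $y\mapsto(\b_y,E_y)$. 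The crucial point is that for $x\in X\cap Y$ the map $\a_x$ restricts to a bijection $D_x\to E_x$ with inverse $\b_x$.

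The core estimate I would aim for is this: there is a constant $C$ (depending only on $|B_G|$, $|B_H|$ and the $L^1$-norms of $|\a(s,\cdot)|_H$, $|\b(t,\cdot)|_G$) such that for all large $n$,
\[
\gr_H(n) \;\lesssim\; \gr_G(Cn),
\]
and symmetrically. To get this, fix $n$ and consider the ball $B_H(e_H,n)\subseteq H$. Using integrability, the subadditivity bound $\||\a(g,\cdot)|_H\|_1\le C|g|_G$ and Markov's inequality show that for a positive-measure set of $x\in X$, the image $\a_x(B_G(e_G,n))$ is contained in $B_H(e_H,C'n)$; dually, $\b_x(B_H(e_H,C'n))\subseteq B_G(e_G,C''n)$ on a positive-measure set, where I would use the ergodic theorem along the F\o{}lner sequence of balls (as in the lemma about $|D_x\cap B_G(r)|/|B_G(r)|\to\mu_X(X\cap Y)$) to guarantee that $D_x$, respectively $E_x$, fills up a definite proportion of these balls. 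On the intersection of these events, $\a_x$ is an injection from a subset of $B_G(e_G,n)$ of size $\ge (\mu_X(X\cap Y)-\epsilon)\gr_G(n)$ into $B_H(e_H,C'n)$ — and reversing roles, from a subset of comparable size of $B_H(e_H,C'n)$ back into $B_G(e_G,C''n)$. Counting the image gives $\gr_G(n)\lesssim \gr_H(C'n)$ up to the multiplicative constant $(\mu_X(X\cap Y)-\epsilon)^{-1}$, and the reverse inequality follows by symmetry; combining them yields $\gr_G\approx\gr_H$.

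I expect the main obstacle to be making the "positive proportion of the ball" statements genuinely uniform in $n$ while simultaneously controlling \emph{both} directions on a single common event — one has to intersect four bad events (the two Markov estimates for $\a$ and $\b$, and the two ergodic-averaging estimates for $D_\bullet$, $E_\bullet$) and check that their total measure can be made $<1$ independently of $n$, which is exactly where the exactness of polynomial growth / F\o{}lner property of balls is not actually needed — Markov plus the $L^1$-averaging ergodic theorem for general amenable groups suffices, since one only needs balls to be F\o{}lner, and that is automatic here because $G,H$ have polynomial (hence subexponential) growth. A secondary technical nuisance is keeping track of the additive versus multiplicative distortions incurred when $|g|_G$ is small; this is handled exactly as in Corollary~\ref{cor:as}, by a separate Markov estimate on the finitely many random variables $\{|\a(g,\cdot)|_H : g\in B_G(e_G,C)\}$. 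Once these uniformities are in place the counting is routine, and the weak-equivalence conclusion $\gr_G\approx\gr_H$ drops out.
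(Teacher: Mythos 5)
Your outline does capture the rough shape of a volume-comparison argument, but it has a genuine gap: you have silently imported the hypothesis that $G$ and $H$ have polynomial growth. Theorem~\ref{thm:growth2} is stated for arbitrary finitely generated IME groups, and indeed this generality is used in the paper (Lemma~\ref{lem:growth-est} explicitly appeals to ``Bowen's result\ldots for arbitrary f.-g.\ groups,'' and the Corollary in the introduction needs the theorem before one knows that $G$ has polynomial growth). Your control of $|D_x\cap B_G(e,n)|$ relies on the Norm Ergodic Theorem along the sequence of balls $B_G(e,n)$, which requires those balls to be a F\o lner sequence. That fails for non-amenable $G$, and even for amenable groups of subexponential growth only a subsequence of balls is F\o lner in general; it is only for polynomial growth that the full sequence of balls is F\o lner. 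So the argument as you have written it does not prove the stated theorem, only a special case of it.

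The paper's proof avoids this entirely. Bowen's key device is Lemma~\ref{lem:simple}, which gives the lower bound
\[
\int_{X_0}\frac{|R_{X_0}(x)\cap B_G(e,n)|}{|B_G(e,n)|}\,d\mu(x)\ \ge\ 2\mu(X_0)-\mu(X)
\]
by a one-line integration trick (integrate over $X$ rather than $X_0$ and use measure preservation), with no ergodic theorem and no condition on $G$ at all. This replaces your F\o lner/ergodic-averaging step. The other structural difference is that rather than working directly with the sets $D_x$, $E_x$ and the partial bijections $\a_x|D_x$, Bowen factors the argument through the abstract notions of \emph{measurable embedding} and \emph{integrable embedding}: Theorem~\ref{thm:me-embedding} shows one can replace $\a$ by a cohomologous cocycle $\a'$ that is globally at most $C$-to-$1$ (with $C=\lceil m(X)/m(Y)\rceil$), and Theorem~\ref{thm:lp-embedding} upgrades this to the integrable setting. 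The growth inequality $\gr_G\lesssim\gr_H$ (Theorem~\ref{thm:growth}) is then proved from that abstract one-way embedding property alone, using Markov for the radius control and Lemma~\ref{lem:simple} for the return-time control; the symmetric statement comes from $\b$. This modular approach is also slightly more general, as it gives a one-directional growth comparison from a one-way $L^1$-embedding, not just from a coupling. To repair your proposal you would need to replace the ergodic-averaging step with an argument in the spirit of Lemma~\ref{lem:simple}, at which point the polynomial-growth assumption can be dropped.
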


\begin{cor}
There is an uncountably family of non-IME countably infinite amenable groups.
\end{cor}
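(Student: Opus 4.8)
The plan is to exhibit an uncountable family of finitely generated amenable groups that are pairwise distinguished by their degrees of growth, and then invoke Theorem~\ref{thm:growth2}: since IME implies $\gr_G \approx \gr_H$, any two groups in this family with inequivalent growth functions cannot be IME. So the task reduces to a purely group-theoretic construction of uncountably many amenable groups with pairwise weakly-inequivalent growth.

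First I would recall the classical fact that there exist groups of intermediate growth, and more precisely that the family of Grigorchuk groups $G_\omega$, indexed by infinite sequences $\omega \in \{0,1,2\}^{\mathbb{N}}$ (subject to a mild non-eventual-constancy condition), are all finitely generated, residually finite, and amenable, and that their growth functions realize uncountably many distinct weak-equivalence classes. This last point is the crux: Grigorchuk showed in his work on the space of growth types that one can choose a continuum-sized subfamily $\{G_{\omega_i}\}_{i \in I}$ whose growth functions $\gr_{G_{\omega_i}}$ are pairwise incomparable under $\lesssim$ (the sequences $\omega_i$ being engineered so that the oscillation of $\log \gr$ along one dominates and is dominated by the other at interleaved scales). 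I would cite this directly rather than reprove it.

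The steps, in order: (1) fix such an uncountable family $\{G_i : i \in I\}$ of finitely generated amenable groups with pairwise weakly-inequivalent growth functions, $\gr_{G_i} \not\approx \gr_{G_j}$ for $i \neq j$; (2) observe each $G_i$ is countably infinite and amenable; (3) apply Theorem~\ref{thm:growth2} in contrapositive form: if $G_i \stackrel{\rm{IME}}{\sim} G_j$ then $\gr_{G_i} \approx \gr_{G_j}$, contradicting the choice of the family unless $i = j$; (4) conclude $\{G_i\}_{i \in I}$ are pairwise non-IME, giving the uncountable family. If one prefers to avoid intermediate growth entirely, an alternative for step (1) is to take direct sums or wreath-product constructions whose growth (or, via a more refined invariant such as the isoperimetric/Følner function, which could be substituted if one strengthens the cited appendix result) separates a continuum of cases, but the Grigorchuk family is the cleanest.

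The main obstacle is entirely in step (1): one needs an explicit uncountable antichain in the poset of growth types among amenable groups, and this is a nontrivial theorem rather than a calculation. Everything else is immediate from Theorem~\ref{thm:growth2}. It is worth remarking that the corollary is genuinely about amenable groups, where measure equivalence itself collapses (by Ornstein--Weiss) to a single class, so the content is precisely that the integrability constraint, through its control of growth, refines this trivial relation into uncountably many classes.
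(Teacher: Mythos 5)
Your proposal is correct and follows essentially the same route as the paper: cite Grigorchuk's construction of uncountably many degrees of growth among finitely generated groups, observe these must be amenable, and apply Theorem~\ref{thm:growth2} in contrapositive. The only cosmetic difference is that the paper establishes amenability of the family indirectly (all non-amenable groups have exponential growth, so any family realizing infinitely many growth types consists of amenable groups), whereas you invoke the known amenability of the Grigorchuk groups directly; both are fine.
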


\begin{proof}
In \cite{Gri85} it is shown that there exists an uncountable family of degrees of growth of groups. These groups are amenable since all non-amenable groups have the same degree of growth, namely exponential growth.
\end{proof}

By contrast, it follows from work of Ornstein-Weiss \cite{OrnWei80} (extending well-known results of Dye \cite{Dye59,Dye63}) that all countably infinite amenable groups are measure-equivalent.

We obtain Theorem \ref{thm:growth2} as a corollary to a more general result relating growth and integrable-embeddings of groups. This notion is developed in the next two sections.

\subsection*{Acknowledgements}

This note owes its inspiration and motivation from ongoing discussions with Tim Austin, Uri Bader, Alex Furman and Roman Sauer. I am grateful for their encouragement and helpful discussions.

\subsection{Measurable embeddings}

\begin{dfn}[Cocycles and cohomology]
Let $G \cc^T (X,\mu)$ be a finite-measure-preserving (fmp) action. Recall that a measurable map $\alpha: G\times X \to H$ is a {\em cocycle} over $T$ if
$$\alpha(g_2g_1,x)=\alpha(g_2,T^{g_1}x)\alpha(g_1,x)$$
for every $g_2,g_1\in G$ and a.e. $x\in X$. We say that two such cocycles $\alpha,\alpha'$ are {\em cohomologous} if there exists a measurable map $\phi:X \to H$ such that 
$$\alpha'(g,x) = \phi(T^gx) \alpha(g,x) \phi(x)^{-1}.$$
\end{dfn}

\begin{dfn}\label{defn:embedding}
Let $G \cc (X,\mu)$ be an fmp action and $\alpha:G \times X \to H$ a measurable cocycle. We say $\alpha$ is a {\em measurable embedding} if there is a measurable cocycle $\alpha':G\times X \to H$ cohomologous to $\alpha$ and a constant $C>0$ such that for every $h\in H$ and a.e. $x\in X$,
$$|\{g\in G:~ \alpha'(g,x)=h\}| \le C.$$
\end{dfn}

Although we are primarily interested in the $L^1$-version of this definition (given in the next section) here we justify this definition by showing that any cocycle associated to an ME-coupling is a measurable embedding.

\begin{thm}\label{thm:me-embedding}
Let $\Omega$ be an ME coupling of countable groups $G$ and $H$ with associated fundamental domains $X = \Omega//H$, $Y=\Omega//G$ and cocycles $\alpha:G \times X \to H, \beta: H \times Y \to G$ (as in section 2.1). Then $\alpha$ and $\beta$ are measurable embeddings. In fact the constant $C>0$ in Definition \ref{defn:embedding} can be taken to be $\lceil \frac{m(X)}{m(Y)} \rceil$, the least integer greater than or equal to $\frac{m(X)}{m(Y)}$.
\end{thm}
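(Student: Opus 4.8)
The plan is to work directly with the structure of the measure coupling $\Omega$ and the fundamental domains $X$ and $Y$, and to show that the required bound follows from a simple pointwise counting argument combined with the relation~(\ref{eq:inv-reln}). Recall from Subsection~\ref{subs:basics} that, after replacing $Y$ by a suitable $H$-translate, one may arrange that $m(X\cap Y)>0$ and that the extended objects $x\mapsto(\a_x,D_x)$ and $y\mapsto(\b_y,E_y)$ are defined a.e.\ on $X\cup Y$, with $\a_x|D_x\colon D_x\to E_x$ a bijection whose inverse is $\b_x|E_x$. The key structural feature to exploit is that $\a(g,x)$ is determined by which $H$-translate of $X$ contains $g\cdot x$: namely $g\cdot x\in\a(g,x)^{-1}\cdot X$. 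So for fixed $x$ and fixed $h\in H$, the set $\{g\in G:\a(g,x)=h\}$ is precisely $\{g\in G: g\cdot x\in h^{-1}\cdot X\}$.

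First I would observe that the cocycle $\a$ as constructed is itself already of the desired form — no cohomology change is needed — and that the sharp constant $C=\lceil m(X)/m(Y)\rceil$ comes from comparing $X$ against $Y$ rather than against a generic translate. The natural way to see this is to count, for a typical $x\in Y$, how many $g\in G$ send $x$ into a fixed translate $h^{-1}\cdot X$. The $G$-translates of the fundamental domain $Y$ tile $\Omega$; since the $G$-action commutes with the $H$-action, the $G$-translates of $h^{-1}\cdot Y$ also tile $\Omega$, for each fixed $h$. Now $h^{-1}\cdot X$ is a set of $m$-measure $m(X)$, and it is tiled up to measure zero by its intersections with the various $G$-translates $g\cdot (h^{-1}\cdot Y)$ of $h^{-1}\cdot Y$. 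On the other hand, the condition $\a(g,x)=h$ for $x\in Y$ says $g\cdot x\in h^{-1}\cdot X$, and since $x$ ranges over a fundamental domain $Y$ for $G$, the element $g\cdot x$ records which $G$-translate-of-$Y$-cell the point lands in. So the number of such $g$, as a function of $x\in Y$, integrates over $Y$ to exactly $m(h^{-1}\cdot X\cap Y\text{-orbit structure})$; more precisely $\int_Y |\{g: \a(g,x)=h\}|\,dm(x) = m(h^{-1}\cdot X) = m(X)$.

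The remaining step is to upgrade this $L^1$-bound to the pointwise essential bound $\lceil m(X)/m(Y)\rceil$. Here I would use the $S$-invariance of $m|_Y$ and an ergodicity/invariance argument, or more simply argue directly: the function $x\mapsto |\{g\in G:\a(g,x)=h\}|$ on $Y$ is integer-valued, and by the cocycle relations and equivariance of $D_\bullet$ its integral over any $H$-invariant subset is controlled by $m(X)/m(Y)$ times the measure of that subset; combined with ergodicity of $S$ on $(Y,m|_Y)$ after passing to an ergodic component one concludes the function is a.e.\ constant, and an integer no larger than $\lceil m(X)/m(Y)\rceil$. Finally, to get the statement for a.e.\ $x$ in all of $X$ rather than just $Y$, one uses that $\a_x$ for $x\in X$ is, via the extension formula $\a_x(g)=\a_{T^{g_0}x}(gg_0^{-1})\a_{T^{g_0}x}(g_0^{-1})^{-1}$ with $T^{g_0}x\in X\cap Y\subseteq Y$, a left-translate of a restriction of some $\a_{x'}$ with $x'\in Y$, and left-translation does not change fiber cardinalities. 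The main obstacle is the passage from the integral identity to the uniform pointwise bound: one must be careful that the constant is genuinely $\lceil m(X)/m(Y)\rceil$ and not merely $m(X)/m(Y)$ rounded in the wrong direction, and this is exactly where integer-valuedness plus invariance (hence a.e.\ constancy on ergodic components) is essential — by symmetry the same argument with the roles of $X$ and $Y$ swapped gives the companion bound for $\b$.
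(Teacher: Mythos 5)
The central claim of your proposal — that ``the cocycle $\a$ as constructed is itself already of the desired form --- no cohomology change is needed'' — is not correct, and this is precisely the point where the argument breaks down. The paper's definition of measurable embedding allows (and the theorem requires) passing to a \emph{cohomologous} cocycle $\a'$ before the bounded-to-one property can be obtained. Your integral identity
\[
\int_Y |\{g\in G:\ \a(g,x)=h\}|\,\d m(x) \;=\; \sum_{g\in G} m\big(gY\cap h^{-1}X\big) \;=\; m(X)
\]
is correct, but it only gives an $L^1$ bound on the fiber-counting function, not a pointwise one. The ``upgrade'' you propose via $S$-invariance and ergodicity fails because the function $x\mapsto |\{g:\a(g,x)=h\}|$ is not $S$-invariant: applying $S^k$ to $x$ changes which $H$-translate of $X$ the orbit points land in, and the cocycle relations do not preserve fiber cardinalities under this change. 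In fact the proposed conclusion is internally contradictory: if the function were a.e. constant and integer-valued and had average $m(X)/m(Y)$ over $Y$, the constant would have to equal $m(X)/m(Y)$ exactly — yet that ratio is typically irrational, so no such a.e.-constant integer-valued representative can exist. The ceiling $\lceil m(X)/m(Y)\rceil$ is not an artifact of ``rounding'' an a.e. constant; it is the genuine pointwise bound one obtains \emph{after} an explicit cohomological twist.

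The paper's proof makes exactly this twist. It constructs a measurable $\phi\colon X\to G\times H$ so that $\psi(x):=\phi(x)\cdot x$ lands in $Y$ and $\psi$ is at most $\lceil m(X)/m(Y)\rceil$-to-one (this is the nontrivial measure-theoretic input: $X$ can be cut into at most $\lceil m(X)/m(Y)\rceil$ pieces, each of which is pushed bijectively into $Y$ by elements of $G\times H$). The cohomologous cocycle is then $\a'(g,x)=\pi_H(\phi(T^gx))\,\a(g,x)\,\pi_H(\phi(x))^{-1}$, and the pointwise bound on $|\{g:\a'(g,x)=h\}|$ is deduced by showing that coincidences $\a'(g_i,x)=\a'(g_j,x)$ force coincidences $\psi(T^{g_i}x)=\psi(T^{g_j}x)$ among distinct points, so there can be at most $\lceil m(X)/m(Y)\rceil$ of them. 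Your proposal omits the construction of $\phi$ entirely, which is the heart of the proof; without it, no pointwise bound — let alone the sharp constant — follows from the averaging identity you establish.
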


\begin{proof}

By symmetry, it suffices to show $\alpha$ is a measurable embedding. By decomposing $\Omega$ into ergodic components, we may assume without loss of generality that $G \times H \cc \Omega$ is ergodic. Therefore, there exists a measurable map $\phi:X \to G\times H$  such that if $\psi:X \to \Omega$ is defined by $\psi(x)=\phi(x)x$ then $\psi$ is at most $\lceil \frac{m(X)}{m(Y)}  \rceil$-to-1 and the image of $\psi$ lies in $Y$. Let $\pi_H: G\times H \to H$ be the projection map and define $\alpha':G\times X \to H$ by
$$\alpha'(g,x) = \pi_H(\phi(T^g x)) \alpha(g,x) \pi_H(\phi(x))^{-1}.$$
We claim that $\alpha'_x$ is at most $\lceil \frac{m(X)}{m(Y)} \rceil$-to-1 for a.e. $x\in X$. To see this suppose $g_1,\ldots, g_n \in G$ are distinct elements and $\alpha'(g_i,x)=\alpha'(g_j,x)$ for $1\le i,j \le n$. Then
\begin{eqnarray}\label{eqn:1}
\pi_H(\phi( T^{g_i}  x)) \alpha(g_i,x) =  \pi_H(\phi( T^{g_j} x)) \alpha(g_j,x)\quad 1\le i,j \le n.
\end{eqnarray}
Define $\Phi:\Omega \to Y$ by $\Phi(x) = gx$ where $g\in G$ is the unique element with $gx \in Y$. Note that $\Phi$ is $G$-invariant. Let $\psi':X \to \Omega$ be the map $\psi'(x) = \pi_H(\phi(x)) x$ so that $\psi(x) = \Phi(\psi'(x))$. Then for any $j$
\begin{eqnarray*}
\Phi(\pi_H(\phi( T^{g_j} x)) \alpha(g_j,x) x) &=& \Phi(g_j \pi_H(\phi( T^{g_j} x)) \alpha(g_j,x) x)\\ &=& \Phi(\pi_H(\phi( T^{g_j} x)) \alpha(g_j,x) g_j x) \\
&=& \Phi(\pi_H(\phi( T^{g_j} x)) T^{g_j} x) = \Phi(\psi'(T^{g_j} x)) = \psi(T^{g_j} x).
\end{eqnarray*}
Since $\Phi(\pi_H(\phi(T^{g_i} x)) \alpha(g_i,x) x) =\Phi(\pi_H(\phi(T^{g_j} x)) \alpha(g_j,x) x)$, this implies 
$$\Phi(\psi'(T^{g_i} x)) = \psi(T^{g_i} x) = \psi(T^{g_j} x)= \Phi(\psi'(T^{g_j} x)).$$

{\noindent} \textbf{Claim}. For any $i\ne j$, $\psi'(T^{g_i} x)\ne \psi'(T^{g_j} x)$. 

This claim implies that if $i\ne j$ then $T^{g_i} x \ne T^{g_j} x$. Because $\psi$ is at most $\lceil \frac{m(X)}{m(Y)} \rceil$-to-1, this implies that $n \le \lceil \frac{m(X)}{m(Y)} \rceil$. So it suffices to prove the claim.

So suppose that $\psi'(T^{g_i} x)=\psi'(T^{g_j} x)$. Then 
\begin{eqnarray*}
\pi_H(\phi(T^{g_i} x)) \alpha(g_i,x)g_i x &=& \pi_H(\phi(T^{g_i} x)) T^{g_i} x  = \psi'(T^{g_i} x) = \psi'(T^{g_j} x)\\
 &=& \pi_H(\phi(T^{g_j} x)) T^{g_j} x = \pi_H(\phi(T^{g_j} x)) \alpha(g_j,x)g_j x.
\end{eqnarray*}
By (\ref{eqn:1}) this implies $g_ix=g_jx$ which implies $g_i=g_j$ since $G \cc \Omega$ is essentially free. But $g_i$ and $g_j$ are distinct unless $i=j$. This proves the claim and the theorem. 
\end{proof}

\subsection{Integrable embeddings}

The definition of integrable embedding is a bit more complicated than measurable embedding because we only require that $\alpha_x$ is bounded-to-1 for a large subset of $x$ and with $\alpha_x$ is restricted to the associated return time set.

\begin{dfn}
For $Z \subset X$ and $x\in X$, $R_Z(x):=\{g\in G:~ gx \in Z\}$ is the associated {\em return time set}. 
\end{dfn}

\begin{dfn}
Let $G \cc (X,\mu)$ be an fmp action and $\alpha:G \times X \to H$ a measurable cocycle. Then $\alpha$ is an {\em integrable embedding} if for $\epsilon>0$ there exists a cocycle $\alpha':G \times X \to H$ which is cohomologous to $\alpha$ such that
\begin{itemize}
\item $\alpha'$ is integrable;
\item there exists a subset $X_0 \subset X$ with $\mu(X_0)>\mu(X)-\epsilon$ and a constant $C=C(\epsilon)>0$ such that for a.e. $x\in X_0$ and every $h\in H$, 
$$|\{g\in R_{X_0}(x):~ \alpha'(g,x)=h\}| \le C.$$
\end{itemize}
\end{dfn}


\begin{thm}\label{thm:lp-embedding}
Let $\Omega$ be an IME coupling of $G$ and $H$ with associated fundamental domains $X = \Omega//H$ and $Y=\Omega//G$ and cocycles $\alpha:G \times X \to H, \beta: H \times Y \to G$. Then $\alpha$ and $\beta$ are integrable embeddings.
\end{thm}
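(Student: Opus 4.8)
The plan is to deduce Theorem~\ref{thm:lp-embedding} from Theorem~\ref{thm:me-embedding} by checking that the particular cohomology used there can be chosen so as to respect the integrability coming from the IME structure, at the cost of shrinking the domain. By symmetry it suffices to treat $\a$. Fix $\eps>0$. Recall from Subsection~\ref{subs:basics} that in an IME coupling we may arrange that the cocycles $\a:G\times X\to H$ and $\b:H\times Y\to G$ are both integrable, and that, by translating the fundamental domain $Y$, we may assume $m(X\cap Y)>0$; on $X\cap Y$ the two orbit equivalence relations agree and $\a_x|D_x$ is a bijection onto $E_x$ with inverse $\b_x|E_x$, where $D_x=\{g\in G: T^gx\in X\cap Y\}$.

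First I would choose $X_0$. Since $\mu_X(X\cap Y)>0$ is a fixed positive number, it is not immediately small, so the idea is instead to take $X_0$ to be a large-measure subset of $X$ on which the conjugating function $\phi$ of Theorem~\ref{thm:me-embedding} — which pushes $x$ into $Y$ in a boundedly-many-to-one fashion — is close to the identity in the word metric. Concretely, write $\phi(x)=(\g(x),\eta(x))\in G\times H$; the cocycle $\a'$ produced in Theorem~\ref{thm:me-embedding} is $\a'(g,x)=\eta(T^gx)\a(g,x)\eta(x)^{-1}$. This $\a'$ is boundedly-to-one on \emph{all} of $G$ for a.e.\ $x$, hence certainly on every return time set; what is \emph{not} automatic is that $\a'$ is integrable, because the correction terms $\eta(T^gx)$ and $\eta(x)^{-1}$ need not have finite $L^1$ length. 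So the main task is to produce, for each $\eps$, a cohomologous integrable cocycle that is boundedly-to-one on the return times to a set of measure $>\mu(X)-\eps$.

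The key steps I would carry out are: (1) Take $X_0\subseteq X\cap Y$ large by the following trick — since $m(X\cap Y)$ may be small, first replace $Y$ by a union of finitely many $H$-translates $h_1Y,\dots,h_NY$ whose intersection with $X$ exhausts all but $\eps$ of $X$ (possible since countably many translates cover $\Omega$); on the piece of $X$ covered by $h_iY$ conjugate $\a$ by the locally constant function $x\mapsto h_i$, which changes $\a$ only by left/right multiplication by the fixed elements $h_i\in H$ and so preserves integrability by~(\ref{eq:cocyc-subadd})-type bounds. After this modification we may assume $\mu_X(X\cap Y)>\mu(X)-\eps$; set $X_0:=X\cap Y$. (2) On $X_0$, the cocycle $\a$ itself (no cohomology needed) satisfies, for $x\in X_0$, that $\{g\in R_{X_0}(x):\a(g,x)=h\}=\{g\in D_x:\a_x(g)=h\}$ has at most one element, since $\a_x|D_x$ is injective — so the bounded-to-one requirement holds with $C=1$. (3) Integrability of $\a'$ (the version cohomologous via the finitely-valued function from step~(1)) follows from integrability of the original $\a$ together with the boundedness of the conjugating elements and subadditivity of $|\cdot|_H$. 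Assembling these gives the two bullet points in the definition of integrable embedding.

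The main obstacle is step~(1): one must be careful that enlarging $Y$ to a finite union of translates is compatible with the cocycle/fundamental-domain formalism — the union of translates is no longer a fundamental domain, so rather than literally enlarging $Y$ one conjugates $\a$ by a \emph{measurable} $H$-valued function that is constant on each piece $X\cap (h_iY\setminus\bigcup_{j<i}h_jY)$, and one checks that the resulting cocycle $\a'$ is still integrable (a finite maximum of translated integrable functions) and that for $x$ in the chosen large set the fibres of $\a'_x$ over the relevant return-time set are singletons because on that set $\a'$ again intertwines the now-identified orbit equivalence relations. Once this bookkeeping is done the rest is routine.
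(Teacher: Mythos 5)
There is a genuine gap in step~(1). You appeal to the fact that ``countably many translates of $Y$ cover $\Omega$'' to conclude that finitely many $H$-translates $h_1Y,\dots,h_NY$ can make $X\cap\bigcup_i h_iY$ exhaust all but $\eps$ of $X$. But the translates of $Y$ that cover $\Omega$ are the \emph{$G$}-translates (since $Y$ is a $G$-fundamental domain), and those are useless here: applying $g\in G$ to $x\in X$ moves it within its own $G\times H$-orbit but does not produce an $H$-valued cohomology. On the other hand, $H$-translates of $Y$ need not come close to covering $X$. Indeed, if $\eta:X\to H$ is any measurable map with $\eta(x)\cdot x\in Y$ on its domain of definition, then $\psi'(x):=\eta(x)\cdot x$ is injective into $Y$ (two points of $X$ in the same $H$-orbit coincide, $X$ being an $H$-fundamental domain), and it is $m$-preserving on each piece where $\eta$ is constant, so
\[
m\Big(X\cap\bigcup_{h\in H}hY\Big)\ \le\ m(Y).
\]
When the coupling index satisfies $m(X)>m(Y)$ — which is perfectly possible — no choice of $X_0\subseteq X\cap\bigcup_h hY$ can reach measure $>m(X)-\eps$ for small $\eps$. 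So steps~(2) and (3), while fine in themselves, are applied to a set $X_0$ that is simply too small to satisfy the definition of integrable embedding.

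What the paper does instead is precisely calibrated to avoid this: Theorem~\ref{thm:me-embedding} constructs a conjugating map $\phi:X\to G\times H$, not $X\to H$, so that $\psi(x)=\phi(x)x$ lands in $Y$ and is at most $\lceil m(X)/m(Y)\rceil$-to-one (the $G$-component is what absorbs the excess measure when $m(X)>m(Y)$). Only the $H$-component $\pi_H\circ\phi$ enters the cohomology, so $\a'$ is still cohomologous to $\a$ in the required sense; the price is that $C=\lceil m(X)/m(Y)\rceil$ rather than $C=1$. The proof of Theorem~\ref{thm:lp-embedding} then takes this $\a'$, truncates the (a priori unbounded) $H$-valued function $\pi_H\circ\phi$ to a finitely-valued $\phi_0$ on a large set $X_0$, and checks that the resulting $\a''$ is integrable by boundedness and agrees with $\a'$ on return times to $X_0$. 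Your truncation-plus-boundedness idea in step~(3) is exactly right and matches the paper; it is only the construction of the conjugating map in step~(1) that needs to go through $G\times H$ rather than $H$ alone, at which point you are essentially reproving Theorem~\ref{thm:me-embedding} rather than bypassing it.
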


\begin{proof}
By symmetry, it suffices to show that $\alpha$ is an integrable embedding. By Theorem \ref{thm:me-embedding} there exists a cocycle $\alpha':G \times X \to H$ and a constant $C>0$ such that that $\alpha'$ cohomologous to $\alpha$ and $\alpha'_x$ is at most $C$-to-1 for a.e. $x\in X$. Because $\alpha'$ is cohomologous to $\alpha$, there exists a measurable map $\phi:X \to H$ such that
$$\alpha'(g,x) = \phi( T^{g} x)\alpha(g,x)\phi(x)^{-1}.$$
Let $S_G$ be a finite generating set for $G$. Choose a finite subset $W \subset H$ large enough such that if
$$X_0=\{x \in X:~ \phi(x) \in W \textrm{ and } \phi(T^{g} x) \in W ~\forall g\in S_G\}$$
then  $m(X_0) > m(X) - \epsilon$. Define $\phi_0:X \to H$ by
\begin{displaymath}
\phi_0(x)  = \left\{ \begin{array}{cc}
\phi(x) & \textrm{ if } x\in X_0 \\
e_H & \textrm{ otherwise }
\end{array}\right.
\end{displaymath}
Define $\alpha'': G\times X \to H$ by
$$\alpha''(g,x) = \phi_0(T^{g} x) \alpha(g,x) \phi_0(x)^{-1}.$$
For a.e. $x\in X_0$, $\alpha''_x$ restricted to $R_{X_0}(x)$ equals $\alpha'_x$ and is therefore at most $C$-to-1. 

To finish the proof it suffices to show that $\alpha''$ is an integrable cocycle.  Let $M= \max_{h \in W} |h|_H$ and $g\in S_G$. Then
\begin{eqnarray*}
\int |\alpha''(g,x)|_H~d\mu_X(x) &\le& \int (2M+|\alpha(g,x)|_H)~d\mu_X(x) < \infty
\end{eqnarray*}
because $\alpha$ is integrable. It now follows from sub-additivity that $\int |\alpha''(g,x)|_H~d\mu_X(x)  < \infty$ for every $g\in G$.


\end{proof}

\subsection{Growth}


Our main result is:

\begin{thm}\label{thm:growth}
Let $G$, $H$ be two finitely generated groups. If there exists an $L^1$-embedding of $G$ into $H$ then $\gr_G \lesssim \gr_H$. 
\end{thm}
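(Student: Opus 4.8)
The plan is to prove the estimate $\gr_G(n) \le C_1\,\gr_H(C_2 n)$ directly, with explicit constants, by transporting a fixed fraction of the ball $B_G(e,n)$ into a ball of radius $O(n)$ inside $H$ by means of the cocycle of the embedding, and then converting this into a volume comparison via the bounded-to-$1$ property.

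First I would fix the data. Let $\alpha\colon G \cc (X,\mu) \to H$ be the hypothesized $L^1$-embedding (that is, integrable embedding), and normalize $\mu$ to a probability measure; fix a finite generating set $S_G$ of $G$ and a sufficiently small $\epsilon > 0$. The definition of integrable embedding then supplies a cohomologous cocycle $\alpha'$ that is integrable, a subset $X_0 \subseteq X$ with $\mu(X_0) > 1 - \epsilon$, and a constant $C = C(\epsilon)$ such that for a.e.\ $x \in X_0$ the map $g \mapsto \alpha'(g,x)$, restricted to the return-time set $R_{X_0}(x)$, is at most $C$-to-$1$. From integrability, together with the cocycle identity and subadditivity of $|\cdot|_H$, I obtain a constant $A$ with $\int_X |\alpha'(g,x)|_H\,d\mu(x) \le A|g|_G$ for all $g \in G$.

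Next, for each fixed $n$ I would locate a good base point $x_n$ by two first-moment (Markov) estimates. Since the action is measure preserving, the average over $x$ of $|\{g \in B_G(e,n) : gx \notin X_0\}|$ is at most $\epsilon\,|B_G(e,n)|$, so for all but a small-measure set of $x$ at most a quarter of $B_G(e,n)$ fails to return to $X_0$. Likewise, the average over $x$ of $|\{g \in B_G(e,n) : |\alpha'(g,x)|_H > 16An\}|$ is at most $\tfrac1{16}|B_G(e,n)|$ by the integrability bound, so for all but an $\tfrac14$-measure set of $x$ at most a quarter of $B_G(e,n)$ is sent outside $B_H(e,16An)$. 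Intersecting the two ``most $x$'' sets with $X_0$ leaves positive measure once $\epsilon$ is small, so a base point $x_n$ lying in all three sets exists.

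Finally I would run the counting step. For this $x_n$, let $G_n$ be the set of $g \in B_G(e,n)$ that simultaneously return to $X_0$ and satisfy $|\alpha'(g,x_n)|_H \le 16An$; by the two estimates above $|G_n| \ge \tfrac12|B_G(e,n)|$. Since $G_n \subseteq R_{X_0}(x_n)$ and $x_n \in X_0$, the map $g \mapsto \alpha'(g,x_n)$ is at most $C$-to-$1$ on $G_n$, and by construction its image lies in $B_H(e,16An)$; hence $\tfrac12\,\gr_G(n) \le |G_n| \le C\,\gr_H(16An)$, which is $\gr_G \lesssim \gr_H$ with $C_1 = 2C$ and $C_2 = 16A$. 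I do not expect a real obstacle beyond bookkeeping. The one point requiring care — and the reason I need the return-time Markov estimate in addition to the integrability one — is that the bounded-to-$1$ conclusion of an integrable embedding holds only on the return-time set $R_{X_0}(x)$ and only for base points $x \in X_0$, so one must separately ensure that a large portion of the ball $B_G(e,n)$ returns to $X_0$.
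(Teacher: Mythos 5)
Your proof is correct, and it uses the same essential ingredients as the paper: integrability of the cohomologous cocycle $\alpha'$ yields $\int |\alpha'(g,x)|_H\,d\mu \le A|g|_G$, one Markov estimate controls how often $|\alpha'(g,x)|_H$ blows up, a second controls how often $g$ fails to return to $X_0$, and the bounded-to-one property converts a lower bound on a subset of $B_G(e,n)$ into an upper bound via $|B_H(e,\cdot)|$. The main structural difference from the paper is in how these estimates are combined. The paper integrates the inclusion--exclusion bound $|R_{X_2}(x)\cap G_x| \ge |R_{X_2}(x)\cap B_G(e,n)| + |G_x| - |B_G(e,n)|$ over a good set $X_2 = X_0\cap X_1$, and relies on a separate averaging lemma (Lemma B.\ref{lem:simple}) to lower-bound $\int_{X_2} |R_{X_2}(x)\cap B_G(e,n)|\,d\mu$. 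You instead run both first-moment estimates through Markov's inequality a second time to locate a \emph{single} base point $x_n \in X_0$ for which a fixed fraction of $B_G(e,n)$ both returns to $X_0$ and has $|\alpha'(\cdot,x_n)|_H$ controlled, and then count pointwise. This bypasses the paper's return-time averaging lemma entirely and keeps the argument pointwise, at the modest cost of having to track which exceptional sets one has already discarded (you correctly note that $\epsilon$ must be small enough that $X_0$ intersected with the two ``good $x$'' sets retains positive measure, so that some $x_n$ exists). Both approaches give constants that are explicit but not optimized; the paper's version generalizes slightly more gracefully if one wanted a quantitative bound holding on a large-measure set of $x$ rather than for one $x_n$ per $n$, but for the qualitative conclusion $\gr_G\lesssim\gr_H$ your route is a perfectly good and arguably cleaner alternative.
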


This result and Theorem \ref{thm:lp-embedding} immediately imply Theorem \ref{thm:growth2}. To prove Theorem \ref{thm:growth} we need the following simple lemma:
 \begin{lem}\label{lem:simple}
Let $G$ be a finitely generated group. Let $G \cc (X,\mu)$ be an fmp action,  $X_0 \subset X$ a set with positive measure and $R_{X_0}(x) :=\{g\in G:~gx \in X_0\}$ the associated return time set. If $B_G(e,n)$ denotes the ball of radius $n$ centered at the identity in $G$ (with respect to a fixed word metric) then for every $n$
$$ \int_{X_0} \frac{|R_{X_0}(x) \cap B_G(e,n)|}{|B_G(e,n)|} ~d\mu(x) \ge 2\mu(X_0) - \mu(X).$$
\end{lem}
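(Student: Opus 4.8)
The plan is to compute the integral by an averaging (Fubini) argument over a single large F\o lner set, namely the ball $B_G(e,n)$ itself. First I would rewrite the quantity $|R_{X_0}(x)\cap B_G(e,n)|$ as a sum of indicator functions: for fixed $x$,
\[
|R_{X_0}(x)\cap B_G(e,n)| = \sum_{g\in B_G(e,n)} \mathbf{1}_{X_0}(T^gx).
\]
Dividing by $|B_G(e,n)|$ and integrating over $X_0$, Tonelli's theorem lets me exchange the finite sum with the integral, giving
\[
\int_{X_0}\frac{|R_{X_0}(x)\cap B_G(e,n)|}{|B_G(e,n)|}\,d\mu(x)
= \frac{1}{|B_G(e,n)|}\sum_{g\in B_G(e,n)}\mu\big(X_0\cap T^{-g}X_0\big).
\]

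Next I would bound each term $\mu(X_0\cap T^{-g}X_0)$ from below. Since $T$ is measure-preserving, $\mu(T^{-g}X_0)=\mu(X_0)$, and by inclusion–exclusion inside the ambient space $X$ (of total mass $\mu(X)$, finite since the action is fmp),
\[
\mu(X_0\cap T^{-g}X_0)\ \geq\ \mu(X_0)+\mu(T^{-g}X_0)-\mu(X)\ =\ 2\mu(X_0)-\mu(X).
\]
Plugging this uniform lower bound into the averaged sum, every one of the $|B_G(e,n)|$ summands is at least $2\mu(X_0)-\mu(X)$, so the average is at least $2\mu(X_0)-\mu(X)$, which is exactly the claimed inequality.

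I do not expect any serious obstacle here: the statement is elementary and the only points requiring any care are (i) justifying the interchange of sum and integral, which is immediate because $B_G(e,n)$ is finite and the integrands are bounded, and (ii) noting that the bound is only informative when $\mu(X_0)>\mu(X)/2$, which is fine since $X_0$ will be chosen with $\mu(X_0)>\mu(X)-\epsilon$ for small $\epsilon$. The slickest phrasing avoids even mentioning F\o lner sets: it is just measure-preservation plus the two-set inclusion–exclusion inequality applied termwise, then averaged.
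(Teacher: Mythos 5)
Your proof is correct. The paper gets the same bound by a slightly different packaging: it keeps the integral intact, notes that $\int_X \frac{|R_{X_0}(x)\cap B_G(e,n)|}{|B_G(e,n)|}\,d\mu(x)=\mu(X_0)$ exactly (which is your Fubini computation summed over $g$), writes $\int_{X_0}=\int_X-\int_{X\setminus X_0}$, and bounds the second integral by $\mu(X\setminus X_0)$ using the trivial fact that the integrand is at most $1$. You instead open up the sum over $g\in B_G(e,n)$ and apply the inclusion--exclusion inequality $\mu(X_0\cap T^{-g}X_0)\ge 2\mu(X_0)-\mu(X)$ termwise before averaging. The two inequalities are dual manifestations of the same Bonferroni estimate, so the arguments are logically equivalent; the paper's version is marginally slicker in that it avoids exposing the individual intersections $X_0\cap T^{-g}X_0$, while yours makes the stationarity of each term explicit, which would be convenient if one wanted to refine the bound using information about specific $g$. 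Both are elementary and both deliver the stated inequality. One stylistic caveat: you should make sure the notation $T^g$ for the action matches the paper, which in this appendix simply writes $gx$; this is cosmetic but worth fixing for consistency.
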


\begin{proof}
By integrating over $X$ in place of $X_0$ and using that $G\cc X$ is measure-preserving we see that
\begin{eqnarray*}
\mu(X_0)= \int_{X} \frac{|R_{X_0}(x) \cap B_G(e,n)|}{|B_G(e,n)|} ~d\mu(x).
\end{eqnarray*}
Therefore 
\begin{eqnarray*}
\int_{X_0} \frac{|R_{X_0}(x) \cap B_G(e,n)|}{|B_G(e,n)|} ~d\mu(x) &=& \mu(X_0) - \int_{X\setminus X_0} \frac{|R_{X_0}(x) \cap B_G(e,n)|}{|B_G(e,n)|} ~d\mu(x)\\ 
&\ge& \mu(X_0) - \mu(X \setminus X_0) = 2\mu(X_0) - \mu(X).
\end{eqnarray*}

\end{proof} 

\begin{proof}[Proof of Theorem \ref{thm:growth}]
Let $G \cc (X,\mu)$ be an ess. free fmp action and $\alpha:G \times X \to H$ an $L^1$-embedding. After replacing $\alpha$ with a cohomologous cocycle if necessary we may assume there exists a set $X_0 \subset X$ with $\mu(X_0) \ge 0.9\mu(X)$ and a constant $C>0$ such that for a.e. $x\in X_0$, $\alpha_x$ restricted to the return time set $R_{X_0}(x)$ is at most $C$-to-1 (where $\alpha_x:G \to H$ is defined by $\alpha_x(g)=\alpha(g,x)$).

For $g\in G$, let $\kappa(g) := \int |\alpha(g,x)|_G ~d\mu(x)$. An easy exercise shows $\kappa(gh) \le \kappa(g) + \kappa(h)$. Let $M=\sup_{g\in S} \kappa(g)$ (where $S\subset G$ is the finite symmetric generating set defining the word metric). Let $B_G(e,n)$ denote the ball of radius $n$ in $G$. Note that
$$\sum_{g\in B_G(e,n)} \kappa(g) \le \sum_{g \in B_G(e,n)} |g|_GM.$$
By Markov's inequality, for any $t>0$,
$$\mu\left(\left\{ x\in X:~ \sum_{g\in B_G(e,n)} \frac{|\alpha(g,x)|_H}{|g|_G} \ge t \right\}\right) \le  \frac{1}{t} \sum_{g\in B_G(e,n)} \frac{\kappa(g)}{|g|_G}   \le \frac{|B_G(e,n)|  M}{t}.$$
In particular, by setting $t= 10  M |B_G(e,n)|$, we have that $\mu(X_1)\ge 0.9\mu(X)$ where 
$$X_1:=\left\{ x\in X:~ \sum_{g\in B_G(e,n)} \frac{|\alpha(g,x)|_H}{|g|_G} < 10M |B_G(e,n)| \right\}.$$
If $x\in X_1$ and
$$G_x=\{g\in B_G(e,n):~|\alpha(g,x)|_G \le 60 M |g|_G\}$$
then $|G_x| \ge (5/6) |B_G(e,n)|$. Let $X_2=X_0\cap X_1$. Observe that 
$$\mu(X_2)=\mu(X_0)+\mu(X_1) - \mu(X_0 \cup X_1) \ge \mu(X_0)+\mu(X_1) - \mu(X) \ge 0.8\mu(X).$$
By Lemma \ref{lem:simple}, 
$$\int_{X_2} \frac{|R_{X_2}(x) \cap B_G(e,n)|}{|B_G(e,n)|} ~d\mu(x) \ge 2\mu(X_2) - \mu(X) \ge 0.6\mu(X).$$
So
\begin{eqnarray*}
&&\int_{X_2} |R_{X_2}(x) \cap G_x|  ~d\mu(x)\\ &&\ge \int_{X_2} |R_{X_2}(x) \cap B_G(e,n)| + |G_x| - |B_G(e,n)|~d\mu(x) \\
&&\ge 0.6\mu(X)|B_G(e,n)| + (5/6)|B_G(e,n)|\mu(X_2)  - |B_G(e,n)|\mu(X_2) \\
&&\ge |B_G(e,n)| \mu(X) ( 0.6 + 0.5 - 1) = 0.1\mu(X) |B_G(e,n)|.
\end{eqnarray*} 
On the other hand, for every $x\in X_2$, $\alpha_x$ restricted to $R_{X_2}(x)$ is at most $C$-to-1. Therefore 
\begin{eqnarray*}
 0.1\mu(X) |B_G(e,n)| &\le& \int_{X_2} |G_x \cap R_{X_2}(x) |~d\mu(x)\\ &\le& \int_{X_2} |\{g \in R_{X_2}(x):~|\alpha(g,x)|_H \le 60Mn\}| ~d\mu(x) \\
&\le& \sum_{h \in B_H(e,60Mn)} \int_{X_2} |\{g \in R_{X_2}(x):~ \alpha(g,x) =h \}| ~d\mu(x) \\
&\le& C |B_H(e,60Mn)| \mu(X).
\end{eqnarray*}
So $|B_G(e,n)| \le C |B_H(e,60Mn)|$. Since this is true for all $n$, $\gr_G \lesssim \gr_H$.
\end{proof}


\bibliographystyle{abbrv}
\bibliography{bibfile}

\def\cprime{$'$}
\begin{thebibliography}{10}

\bibitem{BadFurSau--IME}
U.~Bader, A.~Furman, and F.~Sauer.
\newblock Integrable measure equivalence and rigidity of hyperbolic lattices.
\newblock To appear, \emph{Invent. Math.}

\bibitem{Boi90}
D.~Boivin.
\newblock First passage percolation: the stationary case.
\newblock {\em Probab. Theory Related Fields}, 86(4):491--499, 1990.

\bibitem{Bre07}
E.~Breuillard.
\newblock Geometry of locally compact groups of polynomial growth and shape of
  large balls.
\newblock Preprint, available online at \verb|arXiv.org|: 0704.0095.

\bibitem{CoxDur81}
J.~T. Cox and R.~Durrett.
\newblock Some limit theorems for percolation processes with necessary and
  sufficient conditions.
\newblock {\em Ann. Probab.}, 9(4):583--603, 1981.

\bibitem{deC11}
Y.~de~Cornulier.
\newblock Asymptotic cones of {L}ie groups and cone equivalences.
\newblock {\em Illinois J. Math.}, 55(1):237--259 (2012), 2011.

\bibitem{Dye59}
H.~A. Dye.
\newblock On groups of measure preserving transformation. {I}.
\newblock {\em Amer. J. Math.}, 81:119--159, 1959.

\bibitem{Dye63}
H.~A. Dye.
\newblock On groups of measure preserving transformations. {II}.
\newblock {\em Amer. J. Math.}, 85:551--576, 1963.

\bibitem{Furman11}
A.~Furman.
\newblock A survey of measured group theory.
\newblock In {\em Geometry, rigidity, and group actions}, Chicago Lectures in
  Math., pages 296--374. Univ. Chicago Press, Chicago, IL, 2011.

\bibitem{Gri13}
R.~Grigorchuk.
\newblock Milnor's problem on the growth of groups and its consequences.
\newblock In {H}onor of 80th anniversary of {J}. {M}ilnor (accepted,
  {P}rinceton {U}niversity {P}ress). Preprint online at \verb|arXiv.org|:
  1111.0512.

\bibitem{Gri85}
R.~I. Grigorchuk.
\newblock Degrees of growth of finitely generated groups and the theory of
  invariant means.
\newblock {\em Izv. Akad. Nauk SSSR Ser. Mat.}, 48(5):939--985, 1984.

\bibitem{GriKes12}
G.~R. Grimmett and H.~Kesten.
\newblock Percolation since {S}aint-{F}lour.
\newblock In {\em Percolation theory at {S}aint-{F}lour}, Probab. St.-Flour,
  pages ix--xxvii. Springer, Heidelberg, 2012.

\bibitem{Gro81}
M.~Gromov.
\newblock Groups of polynomial growth and expanding maps.
\newblock {\em Inst. Hautes \'Etudes Sci. Publ. Math.}, (53):53--73, 1981.

\bibitem{Gro93}
M.~Gromov.
\newblock Asymptotic invariants of infinite groups.
\newblock In {\em Geometric group theory, {V}ol.\ 2 ({S}ussex, 1991)}, volume
  182 of {\em London Math. Soc. Lecture Note Ser.}, pages 1--295. Cambridge
  Univ. Press, Cambridge, 1993.

\bibitem{HamWel65}
J.~M. Hammersley and D.~J.~A. Welsh.
\newblock First-passage percolation, subadditive processes, stochastic
  networks, and generalized renewal theory.
\newblock In {\em Proc. {I}nternat. {R}es. {S}emin., {S}tatist. {L}ab., {U}niv.
  {C}alifornia, {B}erkeley, {C}alif}, pages 61--110. Springer-Verlag, New York,
  1965.

\bibitem{MarMos95}
G.~A. Margulis and G.~D. Mostow.
\newblock The differential of a quasi-conformal mapping of a
  {C}arnot-{C}arath\'eodory space.
\newblock {\em Geom. Funct. Anal.}, 5(2):402--433, 1995.

\bibitem{Mil68}
J.~Milnor.
\newblock A note on curvature and fundamental group.
\newblock {\em J. Differential Geometry}, 2:1--7, 1968.

\bibitem{Mil68b}
J.~Milnor.
\newblock Problem 5603.
\newblock {\em Amer. Math. Monthly}, 75:685--686, 1968.

\bibitem{Mon06}
N.~Monod.
\newblock An invitation to bounded cohomology.
\newblock In {\em International {C}ongress of {M}athematicians. {V}ol. {II}},
  pages 1183--1211. Eur. Math. Soc., Z\"urich, 2006.

\bibitem{OrnWei80}
D.~S. Ornstein and B.~Weiss.
\newblock Ergodic theory of amenable group actions. {I}. {T}he {R}ohlin lemma.
\newblock {\em Bull. Amer. Math. Soc. (N.S.)}, 2(1):161--164, 1980.

\bibitem{Osi01}
D.~V. Osin.
\newblock Subgroup distortions in nilpotent groups.
\newblock {\em Comm. Algebra}, 29(12):5439--5463, 2001.

\bibitem{Pan83}
P.~Pansu.
\newblock Croissance des boules et des g\'eod\'esiques ferm\'ees dans les
  nilvari\'et\'es.
\newblock {\em Ergodic Theory Dynam. Systems}, 3(3):415--445, 1983.

\bibitem{Pan89}
P.~Pansu.
\newblock M\'etriques de {C}arnot-{C}arath\'eodory et quasiisom\'etries des
  espaces sym\'etriques de rang un.
\newblock {\em Ann. of Math. (2)}, 129(1):1--60, 1989.

\bibitem{Pit97}
C.~Pittet.
\newblock Isoperimetric inequalities in nilpotent groups.
\newblock {\em J. London Math. Soc. (2)}, 55(3):588--600, 1997.

\bibitem{Sheff05}
S.~Sheffield.
\newblock Random surfaces.
\newblock {\em Ast\'erisque}, (304):vi+175, 2005.

\bibitem{Sva55}
A.~S. {\v{S}}varc.
\newblock A volume invariant of coverings.
\newblock {\em Dokl. Akad. Nauk SSSR (N.S.)}, 105:32--34, 1955.

\bibitem{Tho(S)Vel00}
S.~Thomas and B.~Velickovic.
\newblock Asymptotic cones of finitely generated groups.
\newblock {\em Bull. London Math. Soc.}, 32(2):203--208, 2000.

\end{thebibliography}

\noindent\small{T. Austin, Courant Institute of Mathematical Sciences, New York University, 251 Mercer Street, New York, NY 10012, U.S.A.}

\noindent\small{Email: \texttt{tim@cims.nyu.edu}}

\vspace{7pt}

\noindent\small{L. Bowen, University of Texas at Austin, Mathematics Department, RLM 8.100, 2515 Speedway Stop C1200,
Austin, TX 78712, U.S.A.}

\noindent\small{Email: \texttt{lpbowen@math.utexas.edu}}

\end{document}